\newtheorem{theorem}{Theorem}[section]
\newtheorem*{theorem*}{Theorem B}
\newtheorem{lemma}[theorem]{Lemma}
\newtheorem{proposition}[theorem]{Proposition}
\newtheorem{corollary}[theorem]{Corollary}
\newtheorem*{definition*}{Definition}
\newtheorem*{remark*}{Remark}
\newtheorem*{observation*}{Observation}
\newtheorem*{assumption*}{Assumption}
\newtheorem{example}[theorem]{Example}
\theoremstyle{definition}
\newtheorem{definition}{Definition}[section]
\newtheorem{question}{Question}
\theoremstyle{remark}
\newtheorem{remark}{Remark}[section]
\newcommand{\R}{\mathbb{R}}
\newcommand{\N}{\mathbb{N}}
\newcommand{\Z}{\mathbb{Z}}
\newcommand{\C}{\mathbb{C}}
\newcommand{\T}{\mathbb{T}}
\newcommand{\B}{\mathbb{B}}
\newcommand{\BS}{\mathbb{S}}
\newcommand{\F}{\mathbb{F}}
\newcommand{\TT}{\mathit{Toep}}
\newcommand{\spec}{\mathrm{spec}}
\newcommand{\rad}{\mathrm{rad}}
\newcommand{\pless}{\preccurlyeq}
\newcommand{\an}{\text{\, and \,}}
\begin{document}

\title[Branching-Toeplitz operators]{Hyper-positive definite functions II: A complete study of branching-Toeplitz operators}

\author
{Yanqi Qiu}
\address
{Yanqi QIU: Institute of Mathematics and Hua Loo-Keng Key Laboratory of Mathematics, AMSS, Chinese Academy of Sciences, Beijing 100190, China.}
\email{yanqi.qiu@amss.ac.cn; yanqi.qiu@hotmail.com}

\author{Zipeng Wang}
\address{Zipeng Wang: Department of Mathematics, Soochow University, Suzhou 215006, China}
\email{zipengwang2012@gmail.com}

 \begin{abstract}
We introduce and give a more or less complete study of a family of branching-Toeplitz operators on the Hilbert space $\ell^2(T_q)$  indexed by a rooted homogeneous tree $T_q$ of degree $q\ge 2$.  The finite dimensional analogues of such operators form a very natural family of structured sparse matrices called branching-Toeplitz matrices and will also be investigated. The branching-Toeplitz operators/matrices in this paper should be viewed as natural generalizations of the standard Toeplitz operators/matrices. We will apply our results to construct a family of determinantal point processes on homogeneous trees which are branching-type strong stationary stochastic processes.
\end{abstract}

\subjclass[2010]{Primary 47B35, 47B65, 15B05; Secondary 47D03, 46N30}
\keywords{branching-Toeplitz operators, branching-Toeplitz matrices, sparse matrices, full Fock spaces, homogeneous trees, free semi-groups.}

\maketitle

\section{Introduction}
\subsection{Sparse matrices associated with a partial order}\label{sec-sparse}

In what follows, by a tree, we will always mean the set of the vertices of the tree.  Fix an integer $q\ge 2$ and let $T_q$ be the {\it infinite rooted $q$-homogeneous tree}: that is, there is a distinguished vertex $o \in T_q$ called the root vertex which is the unique vertex without ancestor and each vertex in $T_q$ has exactly $q$ children.  Let  $d(\cdot, \cdot)$ denote  the usual graph distance  on $T_q$ and
for each $\sigma \in T_q$, we call $d(o, \sigma)$ the {\it generation number} of $\sigma$ and will be denoted simply by
\[
| \sigma| := d(o, \sigma), \quad \sigma \in T_q.
\]
Let $\pless$ denote the natural partial order on $T_q$. That is,  two vertices   $\sigma_1, \sigma_2\in T_q$ satisfy $\sigma_1 \pless \sigma_2$ if and only if they are contained in the same {\it rooted geodesic ray} (that is, a geodesic ray starting from the root vertex) and $|\sigma_1| \le |\sigma_2|$.   Let $\mathcal{C}$ be the set corresponding to the relation  of comparability for the partial order $\pless$:
\[
\mathcal{C}  = \Big\{(\sigma_1, \sigma_2) \in T_q \times T_q\Big| \text{$\sigma_1 \pless \sigma_2$  or $\sigma_2 \pless \sigma_1$}\Big\}.
\]
 Note that $\mathcal{C}$ is symmetric in the sense that $(\sigma_1, \sigma_2) \in \mathcal{C}$ if and only if $(\sigma_2, \sigma_1)\in \mathcal{C}$.

The partial order $\pless$ on $T_q$ gives rise to a natural family of  sparse matrices as follows.   For  any integer $n \ge 1$, let $\B_n(T_q) \subset T_q$ be the finite subset defined by
\begin{align}\label{B-n-S-n}
\B_n(T_q): =\Big \{\sigma \in T_q \Big| | \sigma| \le n \Big\}.
\end{align}
By convention, we also denote $T_q$ by $B_\infty(T_q)$.  Consider matrices  of the form:
\[
A = \big[a_{\sigma_1, \sigma_2}\big]_{\sigma_1, \sigma_2 \in \B_n(T_q)} \text{\, with $a_{\sigma_1, \sigma_2} = 0$ whenever $(\sigma_1, \sigma_2) \notin \mathcal{C}$}.
\]
We call such matrices {\it branching-type matrices} (the word ``branching-type'' comes from its relation with {\it branching-type stationary stochastic processes} on $T_q$ introduced in \cite{QW-I}).   Clearly, branching-type matrices are sparse (for $q\ge 2$) in the sense that the number of non-zero entries of any such matrix is $o([\# \B_n(T_q)]^2)$ as $n$ goes to infinity. The  branching-type matrices arise naturally in the study of left creation operators on the full Fock space of $\C^q$, see \S \ref{sec-fock} for details.

\subsection{Branching-Toeplitz kernels}
Given any function $\alpha: \Z \rightarrow \C$, we may introduce a {\it branching-Toeplitz kernel} $K_\alpha: T_q\times T_q\rightarrow \C$ by
\[
K_\alpha(\sigma_1, \sigma_2) =  \alpha( | \sigma_1| - |\sigma_2|) \cdot \mathds{1}_{\mathcal{C}}(\sigma_1, \sigma_2), \quad  \sigma_1, \sigma_2 \in T_q.
\]
Let $\TT(T_q)$ denote the set of all branching-Toeplitz kernels on $T_q$:
\[
\TT(T_q): = \Big\{ K_\alpha \Big|
\text{$\alpha$ ranges over all  $\C$-valued functions on $\Z$} \Big\}.
\]
 If $q =1$, then $T_1 = \N = \{0, 1, 2, \cdots \}$ and we go back to the standard Toeplitz matrices. If $q \ge 2$, then branching-Toeplitz kernels are  special infinite branching-type matrices introduced in \S\ref{sec-sparse}. Note that restricted on any rooted geodesic ray of $T_q$, a branching-Toeplitz kernel becomes a standard Toeplitz kernel. Note that the branching-Toeplitz kernels in this paper are different from the radial Toeplitz kernels on homogeneous tree (non-rooted) studied in \cite{Q-radial-Toep}.

Ut is convenient for us to introduce the following notation. Given any formal  Fourier series on  $\T = \R/2 \pi \Z$:
\begin{align}\label{fourier-series}
f \sim \sum_{n \in \mathbb{Z}}\widehat{f}(n) e^{in\theta}, \quad \text{with $\widehat{f}(n)\in \C$ for all $n \in \Z$},
\end{align}
 we define a branching-Toeplitz kernel $\Gamma_q[f]: T_q \times T_q \rightarrow \C$ by
\begin{align}\label{def-T-f}
\Gamma_q[f](\sigma_1, \sigma_2): =  \sqrt{q^{- d(\sigma_1, \sigma_2)}} \cdot \widehat{f}(| \sigma_1| - | \sigma_2| ) \cdot  \mathds{1}_{\mathcal{C}}(\sigma_1, \sigma_2), \quad \sigma_1, \sigma_2 \in T_q.
\end{align}
Note that $d(\sigma_1, \sigma_2) = | |\sigma_1| - |\sigma_2||$ for all $(\sigma_1, \sigma_2) \in \mathcal{C}$.
Clearly, any branching-Toeplitz kernel on $T_q$ is of the form $\Gamma_q[f]$ for a unique formal Fourier series $f$. Thus we have
\[
\TT(T_q) = \Big\{\Gamma_q[f]\Big| \text{$f$ is a formal Fourier series on $\T$}\Big\}.
\]
In what follows, we shall call the formal Fourier series $f$ the symbol of $\Gamma_q[f]$.

\begin{remark}
By formal Fourier series, we mean that no a priori assumption is assumed on the sequence $(\widehat{f}(n))_{n\in \Z}$. Thus the notation  $\widehat{f}$ does not mean that it is the Fourier transform of a function $f$ on $\T$. However, if the formal Fourier series \eqref{fourier-series} coincides with the Fourier series of a function in $L^1(\T)$ or a Radon measure on $\T$, then we will identify it with the function  or the Radon measure respectively.
\end{remark}

Throughout the paper, we do not distinguish the meaning of positive (resp. positive definite) and non-negative (resp. non-negative definite). In \cite{QW-I}, we obtain the following criterion for positive definite branching-Toeplitz kernels on $T_q$. 

\begin{theorem}[{\cite[Thm 1.8]{QW-I}}]\label{thm-QW-I}
Let $q \ge 2$ be an integer.  The kernel $\Gamma_q[f]$  is positive definite if and only if there exists a positive Radon measure $\mu$ on  $\T = \R/2 \pi \Z$ such that
\[
\widehat{f}(n) =\int_\T  e^{-i  n \theta} d \mu(\theta), \quad \forall n \in \Z.
\]
In the above situation, by identifying the formal Fourier series $f$ with the positive Radon measure $\mu$, we will also denote $\Gamma_q[f]$ by $\Gamma_q[\mu]$.
\end{theorem}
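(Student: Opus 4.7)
The plan is to handle the two implications by genuinely different methods. For sufficiency I would realize $\Gamma_q[\mu]$ as a Gram kernel by factoring through the boundary $\partial T_q$; for necessity I would probe the assumed positivity against test functions uniform on spheres, so that the combinatorics of the tree cancel exactly against the $\sqrt{q^{-d(\sigma_1,\sigma_2)}}$ factor, leaving a classical Toeplitz positive definite sequence on $\Z$ to which the Herglotz--Bochner theorem applies.

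For the sufficiency direction, I would introduce the natural Bernoulli probability measure $\nu$ on $\partial T_q$, for which $\nu(\Omega_\sigma) = q^{-|\sigma|}$, where $\Omega_\sigma \subset \partial T_q$ denotes the cylinder of boundary rays passing through $\sigma$. The crucial property is that $\Omega_{\sigma_1} \cap \Omega_{\sigma_2} = \Omega_{\tau}$ for $(\sigma_1,\sigma_2) \in \mathcal C$, where $\tau$ is the one of larger generation, and $\Omega_{\sigma_1} \cap \Omega_{\sigma_2} = \emptyset$ otherwise. Defining
\[
v_\sigma(\theta,\omega) := q^{|\sigma|/2}\, e^{-i|\sigma|\theta}\,\mathds{1}_{\Omega_\sigma}(\omega) \in L^2\bigl(\T \times \partial T_q,\,\mu \otimes \nu\bigr),
\]
a direct computation then gives $\langle v_{\sigma_1}, v_{\sigma_2}\rangle = \Gamma_q[\mu](\sigma_1,\sigma_2)$, since $(|\sigma_1|+|\sigma_2|)/2 - \max(|\sigma_1|,|\sigma_2|) = -d(\sigma_1,\sigma_2)/2$ on $\mathcal C$. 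Hence $\Gamma_q[\mu]$ is a Gram kernel and therefore positive definite.

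For the necessity direction, I would probe the assumed positivity through the normalized sphere-indicator vectors
\[
\phi_n := q^{-n/2}\,\mathds{1}_{S_n}, \qquad S_n := \{\sigma \in T_q : |\sigma| = n\}, \qquad n \ge 0,
\]
and consider the bilinear pairing $A(m,n) := \sum_{\sigma_1,\sigma_2} \phi_m(\sigma_1)\,\phi_n(\sigma_2)\,\Gamma_q[f](\sigma_1,\sigma_2)$. Assuming $m \le n$, each $\sigma_1 \in S_m$ has exactly $q^{n-m}$ comparable partners in $S_n$, so the combinatorial factor $q^m \cdot q^{n-m}$ combines with the prefactors $q^{-m/2}\,q^{-n/2}\,q^{-(n-m)/2}$ to give total exponent $0$, yielding the clean identity $A(m,n) = \widehat{f}(m-n)$. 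Consequently, applying positive definiteness of $\Gamma_q[f]$ to any finitely-supported linear combination $\phi = \sum_i c_i \phi_{n_i}$ gives $\sum_{i,j} c_i \bar c_j\, \widehat{f}(n_i-n_j) \ge 0$. Since any finite subset of $\Z$ can be translated into $\N$ without changing pairwise differences, the sequence $(\widehat{f}(n))_{n \in \Z}$ is positive definite in the classical sense, and the Herglotz--Bochner theorem delivers the desired positive Radon measure $\mu$ on $\T$.

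The main obstacle is not technical but conceptual, lying in identifying the right objects on each side: the Bernoulli boundary measure whose cylinder intersections reproduce both $\mathcal C$ and the geometric factor $q^{-d/2}$ on the sufficiency side, and the sphere-normalized probes whose factor $q^{-n/2}$ exactly absorbs the branching multiplicity $q^{n-m}$ on the necessity side. Once these are in place, both directions reduce to a short exponent computation plus a classical theorem.
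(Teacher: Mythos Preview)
Your proposal is correct, and both directions take a genuinely different route from the paper's proof in \S\ref{sec-new-pf}.

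For sufficiency, the paper does not build a Gram factorization.  Instead it approximates $\mu$ by the F\'ejer averages $\mathcal F_N*\mu\ge 0$, invokes the direct-sum identity \eqref{trigo-dir-sum} (proved via the full Fock space) to see that $\Gamma_q[\mathcal F_N*\mu]$ is unitarily equivalent to a countable direct sum of classical Toeplitz operators with non-negative symbol, hence positive, and then passes to the coordinatewise limit.  Your boundary construction through $L^2(\T\times\partial T_q,\mu\otimes\nu)$ is more direct and entirely self-contained: it never touches the Fock space and gives an explicit reproducing model for $\Gamma_q[\mu]$.

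For necessity, the paper again works through the Fock-space decomposition: it gauge-twists $\Gamma_q[f]$ to $\Gamma_q[f_t]$, integrates against the F\'ejer kernel to produce a bounded positive operator $\Gamma_q[\mathcal F_N*f]$, reads off from \eqref{trigo-dir-sum} that the classical Toeplitz operator $T(\mathcal F_N*f)$ is positive, hence $\mathcal F_N*f\ge 0$, and extracts $\mu$ as a weak limit.  Your argument instead tests positivity against the sphere-indicator vectors $\phi_n=q^{-n/2}\mathds 1_{S_n}$; this is exactly the radial-subspace computation of Lemma~\ref{lem-inv-sub}, which the paper proves but uses only for the norm lower bound \eqref{low-bdd}, not in its proof of Theorem~\ref{thm-QW-I}.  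You then finish with Herglotz--Bochner, bypassing the F\'ejer/weak-limit step.

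What each buys: your proof is shorter, elementary, and needs no operator-theoretic machinery beyond the Gram-kernel criterion and Herglotz--Bochner.  The paper's proof is less economical for this single statement but is a corollary of the structural identity \eqref{trigo-dir-sum}, which simultaneously delivers Theorems~\ref{thm-direct-sum}, \ref{thm-generalization} and the operator-valued extensions; so the overhead is amortized across the whole paper.
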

By using the method of left creation operators on the full Fock space of $\C^q$, we will obtain a useful relation between $\Gamma_q[f]$ and the standard Toeplitz kernels on $\N$. This relation in particular allows us to give a new proof of Theorem \ref{thm-QW-I}. See \S \ref{sec-new-pf}.

\subsection{Branching-Toeplitz operators}

As usual, denote by $\ell^2(T_q)$  the Hilbert space:
\[
\ell^2(T_q): = \Big\{(v_\sigma)_{\sigma \in T_q} \in \C^{T_q}\Big| \sum_{\sigma \in T_q}|v_\sigma|^2< \infty\Big\}
\]
and let $B(\ell^2(T_q))$ be the set of bounded operators on $\ell^2(T_q)$.  We shall always identify a bounded operator $A \in B(\ell^2(T_q))$ with its kernel  defined by
$A(\sigma_1, \sigma_2)  = \langle A \delta_{\sigma_2}, \delta_{\sigma_1}\rangle$ for all $\sigma_1, \sigma_2 \in T_q$. Using this identification, we may define
\[
B_\TT(\ell^2(T_q)) : = \TT(T_q) \cap B(\ell^2(T_q)).
\]
 The first goal of this paper is to study the problem when the kernel $\Gamma_q[f]$ represents a bounded operator on $\ell^2(T_q)$.

Before stating the results in the case  $q\ge 2$,  let us briefly recall the classical results in the case $q =1$. If $q=1$, then  $T_1 = \N$ and $\Gamma_1[f]$ is  the standard Toeplitz kernel:
\begin{align}\label{cla-toep}
\Gamma_1[f]= T(f): = \Big[\widehat{f}(k-l)\Big]_{k, l \in \N}.
\end{align}
It is  a standard result that  $\Gamma_1[f]$ is a bounded operator on $\ell^2(\N)$ if and only if  $f\in L^\infty(\T)$. Moreover, in this case,  we have the powerful tool of function spaces to deal with the operator $\Gamma_1[f]$. Namely, let $H^2(\T)$ be the Hardy space:
\[
H^2(\T):   = \Big\{f = \sum_{n = 0}^\infty \widehat{f}(n) e^{i n \theta}\Big| \sum_{n  =0}^\infty | \widehat{f}(n) |^2 <\infty \Big\} \subset L^2(\T).
\]
For any $f\in L^\infty(\T)$, the operator $\Gamma_1[f]$  is unitarily equivalent to the standard Toeplitz operator on the Hardy space $H^2(\T)$ defined by  the composition
\begin{align}\label{Toep-hardy}
T_f: H^2(\T) \xrightarrow{M_f} L^2(\T) \xrightarrow{\,P_{+}\,} H^2(\T),
\end{align}
 where $M_f$ is the operator of pointwise multiplication by $f$ and $P_{+}$ is the orthogonal projection onto $H^2(\T)$. In other words,  we have the following commutative diagram:
\begin{align}\label{2-way-Toep}
\begin{CD}
H^2(\T) @>\,\, T_f\,\,>>  H^2(\T)
\\
@V{\text{Fourier transform}}V{\simeq}V  @V{\simeq}V{\text{Fourier transform}}V
\\
\ell^2(\N) @> T(f)= \Gamma_1[f]>>  \ell^2(\N)
\end{CD}.
\end{align}

Now we turn back to the case $q \ge 2$. Since there is no counterpart of Hardy space any more,  the study of the boundedness of $\Gamma_q[f]$ requires more efforts.   We have the following

\begin{theorem}\label{thm-all-symbol}
For any integer $q \ge 2$, we have the set-theoretical equality:
\begin{align}\label{set-th-eq}
B_\TT(\ell^2(T_q)) = \Big\{ \Gamma_q[f]\Big| f \in L^\infty(\T) \Big\}.
\end{align}
Moreover, for any $f\in L^\infty(\T)$, we have
\begin{align}\label{norm-2-side}
 \| \Gamma_q[f]\|   =  \| f \|_\infty.
\end{align}
\end{theorem}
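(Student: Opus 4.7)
The approach is to realize $\Gamma_q[f]$ as a ``functional calculus'' in a carefully chosen isometry on $\ell^2(T_q)$ whose Wold decomposition carries everything back to the classical Toeplitz setting. Using the left creation operators $L_1,\dots,L_q$ to be introduced in \S\ref{sec-fock} (which satisfy $L_i^* L_j = \delta_{ij} I$), I set
\[
U := \frac{1}{\sqrt{q}}\bigl(L_1 + L_2 + \cdots + L_q\bigr).
\]
A direct check gives $U^* U = I$, so $U$ is an isometry on $\ell^2(T_q)$ whose iterates act on the basis $\{\delta_\sigma\}$ by
\[
U^k \delta_\sigma = q^{-k/2}\sum_{|w|=k} \delta_{\sigma w},
\qquad
(U^*)^k \delta_\sigma = \begin{cases} q^{-k/2}\,\delta_{\sigma[-k]}, & |\sigma|\ge k,\\ 0, & |\sigma|<k. \end{cases}
\]
Comparing matrix entries, the factor $\sqrt{q^{-d(\sigma_1,\sigma_2)}}$ in \eqref{def-T-f} turns out to be precisely what is needed to produce the key identity
\[
\Gamma_q[f] \;=\; \sum_{k \ge 0} \widehat{f}(k)\, U^k \;+\; \sum_{k \ge 1} \widehat{f}(-k)\, (U^*)^k \qquad \text{($f$ a trigonometric polynomial)},
\]
with no mixed terms $U^k (U^*)^l$.

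Next I apply the Wold decomposition to $U$. Setting $\mathcal{E} := \Ker U^*$, the formula $U^* v = q^{-1/2}\sum_{\tau}\bigl(\sum_{i=1}^q v_{\tau i}\bigr)\delta_\tau$ shows that $\mathcal{E}$ is infinite-dimensional (it contains $\delta_o$ together with every zero-sum combination on a sibling family), while $\bigcap_{k \ge 1} U^k \ell^2(T_q) = \{0\}$ is immediate from the fact that every vector in $U^k\ell^2(T_q)$ vanishes on $\B_{k-1}(T_q)$. Consequently
\[
\ell^2(T_q) \;=\; \bigoplus_{k \ge 0} U^k \mathcal{E} \;\simeq\; \ell^2(\N) \otimes \mathcal{E}, \qquad U \;\simeq\; S \otimes I_{\mathcal{E}},
\]
with $S$ the unilateral shift on $\ell^2(\N)$. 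Under this identification the above identity reads $\Gamma_q[f] \simeq T(f) \otimes I_{\mathcal{E}}$, so that $\|\Gamma_q[f]\| = \|T(f)\| = \|f\|_\infty$ for every trigonometric polynomial $f$ by the classical fact recalled in \eqref{cla-toep}--\eqref{2-way-Toep}.

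To upgrade both directions to arbitrary $f \in L^\infty(\T)$, I use two independent approximation steps. For the upper bound, the Fejér sums $\sigma_N(f)$ are trigonometric polynomials with $\|\sigma_N(f)\|_\infty \le \|f\|_\infty$ and $\widehat{\sigma_N(f)}(k) \to \widehat{f}(k)$ for each $k$; truncating by the projection $P_N$ onto $\ell^2(\B_N(T_q))$, letting the Fejér parameter tend to infinity, and using the finite-dimensional convergence of matrix entries gives $\|P_N \Gamma_q[f] P_N\| \le \|f\|_\infty$ uniformly in $N$, whence $\Gamma_q[f] \in B_\TT(\ell^2(T_q))$ with $\|\Gamma_q[f]\| \le \|f\|_\infty$. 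For the lower bound, observe that $\delta_o \in \mathcal{E}$, and a direct computation (valid for \emph{any} formal Fourier series $f$) yields
\[
\bigl\langle \Gamma_q[f]\, U^k \delta_o,\; U^l \delta_o \bigr\rangle \;=\; \widehat{f}(l-k), \qquad k,l \ge 0,
\]
so that the classical Toeplitz matrix $T(f)$ is realized as the compression of $\Gamma_q[f]$ to the closed subspace $\overline{\spann}\{U^k \delta_o : k \ge 0\} \simeq \ell^2(\N)$. Boundedness of $\Gamma_q[f]$ therefore forces $T(f)$ to be bounded with $\|T(f)\| \le \|\Gamma_q[f]\|$; the classical equivalence $T(f)$ bounded $\Leftrightarrow f \in L^\infty(\T)$, together with $\|T(f)\| = \|f\|_\infty$, yields both the set equality \eqref{set-th-eq} and the inequality $\|f\|_\infty \le \|\Gamma_q[f]\|$, completing \eqref{norm-2-side}.

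The main technical obstacle is establishing the ``no mixed term'' identity expressing $\Gamma_q[f]$ purely in terms of non-negative powers of $U$ and $U^*$: this is precisely where the normalization $\sqrt{q^{-d(\sigma_1,\sigma_2)}}$ in \eqref{def-T-f} and the combinatorics of $T_q$ conspire, and once it is in hand the Wold decomposition of $U$ transfers the classical theory on $\ell^2(\N)$ to the branching setting essentially for free; the remaining approximation and compression arguments are routine.
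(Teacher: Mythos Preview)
Your argument is correct. It is essentially the route the paper takes to prove Theorem~\ref{thm-direct-sum} (from which, as the paper remarks, Theorem~\ref{thm-all-symbol} follows at once): your isometry $U$ is exactly $\Gamma_q[e^{i\theta}]$ (Lemma~\ref{lem-gamma-1}), your ``no mixed term'' identity is the multiplicativity Lemma~\ref{lem-mul}, and your Wold decomposition plays the role of the explicit unitary conjugations in Lemmas~\ref{lem-2-cr-op}--\ref{lem-cre-direct}. Your compression subspace $\overline{\spann}\{U^k\delta_o\}$ is precisely the radial subspace $\ell^2(T_q)_{\rad}$ of Proposition~\ref{prop-bdd}, so the lower bound matches the paper's Step~1. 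Where you differ is in the upper bound: the paper's two ``official'' proofs of Theorem~\ref{thm-all-symbol} do \emph{not} go through the direct-sum decomposition, but instead import the positive-definiteness criterion Theorem~\ref{thm-QW-I} from \cite{QW-I} to handle real symbols first, and then either bootstrap via the multiplicativity identity $\Gamma_q[\bar Q]\Gamma_q[Q]=\Gamma_q[|Q|^2]$ plus F\'ejer approximation, or invoke Russo--Dye. Your approach is more self-contained (no dependence on Theorem~\ref{thm-QW-I}) and more conceptual via the Wold decomposition; the paper's first two proofs avoid the Fock-space machinery and yield the multiplicativity results of \S2.4 as a by-product.
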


The following multiplicativity result should be compared with the classical result of Brown and Halmos  \cite[Thm 8]{BH-1963}.  Recall that the  Hardy space $H^\infty(\T)$ is defined by
\[
H^\infty(\T) : = \Big\{f \in L^\infty(\T)\Big| f \sim  \sum_{n=0}^\infty \widehat{f}(n) e^{i n \theta}\Big\}.
\]

\begin{theorem}\label{thm-gen-mul}
Let $f, g \in L^\infty(\T)$. Then the equality
$\Gamma_q[\bar{f}g] = \Gamma_q[\bar{f}]\Gamma_q[g]$
holds if and only if either $f \in H^\infty(\T)$ or $g \in H^\infty(\T)$.
\end{theorem}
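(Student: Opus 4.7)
The plan is to reduce both directions of the theorem to the classical Brown--Halmos theorem by exhibiting a canonical $\Gamma_q[f]$-invariant subspace of $\ell^2(T_q)$ on which every $\Gamma_q[f]$ restricts to the classical Toeplitz operator $T(f)$.

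First I would set up the invariant subspace using the Fock space picture of \S\ref{sec-fock}. Under the canonical identification $\ell^2(T_q) \cong \mathcal{F}(\C^q) = \bigoplus_{n \ge 0}(\C^q)^{\otimes n}$, form the unit vector $\Omega := q^{-1/2}(e_1 + \cdots + e_q) \in \C^q$ and the closed subspace $\mathcal{L} := \overline{\spann}\{\Omega^{\otimes n}: n\ge 0\}$. A direct computation --- in which the weight $q^{-d(\sigma_1,\sigma_2)/2}$ from \eqref{def-T-f} exactly cancels the combinatorial count of descendants at each generation --- shows $\Gamma_q[f]\,\Omega^{\otimes n} = \sum_{m\ge 0}\widehat{f}(m-n)\,\Omega^{\otimes m}$ with convergence in $\ell^2$ (since $f \in L^\infty \subset L^2$). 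Consequently $\mathcal{L}$ is invariant under every $\Gamma_q[f]$, and the unitary $\iota: \ell^2(\N) \to \mathcal{L}$, $\delta_n \mapsto \Omega^{\otimes n}$, intertwines $\Gamma_q[f]|_{\mathcal{L}}$ with the classical Toeplitz operator $T(f)$ on $\ell^2(\N)$.

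For necessity, if $\Gamma_q[\bar f]\Gamma_q[g] = \Gamma_q[\bar f g]$, I restrict both sides to the invariant subspace $\mathcal{L}$ and use the intertwining to deduce $T(\bar f)T(g) = T(\bar f g)$ on $\ell^2(\N)$, whence the classical Brown--Halmos theorem \cite[Thm 8]{BH-1963} forces $f \in H^\infty(\T)$ or $g \in H^\infty(\T)$.

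For sufficiency, assume first $g \in H^\infty$. Setting $S := q^{-1/2}(L_1 + \cdots + L_q)$ with $L_i$ the left creation by $e_i$, one has $S^*S = I$, and the kernel \eqref{def-T-f} yields the entry-wise expansions $\Gamma_q[g] = \sum_{m \ge 0}\widehat{g}(m) S^m$ and $\Gamma_q[\bar f] = \sum_{n\in\Z}\widehat{\bar f}(n)\phi_n$, where $\phi_n := S^n$ for $n \ge 0$ and $\phi_n := (S^*)^{-n}$ for $n<0$. The isometry relation $S^*S = I$ gives the pivotal identity $\phi_n S^m = \phi_{n+m}$ for all $n \in \Z$, $m \ge 0$; multiplying the expansions, reindexing by $l = n+m$, and using $\widehat{g}(m) = 0$ for $m<0$ to extend the convolution to an unrestricted sum then yields $\Gamma_q[\bar f]\Gamma_q[g] = \Gamma_q[\bar f g]$ entry-wise. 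The case $f \in H^\infty$ follows by taking adjoints, using $\Gamma_q[h]^* = \Gamma_q[\bar h]$. The principal obstacle is the intertwining of the first step, whose precise weight-matching underlies both directions; once it is in hand, necessity becomes a one-line restriction argument, and sufficiency reduces to the algebraic identity $S^*S = I$ --- exactly what makes ``analytic'' symbols ($g \in H^\infty$) interact cleanly with the isometric shift $S$.
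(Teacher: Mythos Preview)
Your necessity argument is identical to the paper's: the subspace $\mathcal{L}$ is precisely the radial subspace $\ell^2(T_q)_{\rad}$ of Proposition~\ref{prop-bdd} (your $\Omega^{\otimes k}$ is the paper's $h_k = q^{-k/2}\mathds{1}_{\mathbb{S}_k}$), and both of you restrict and invoke Brown--Halmos.

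For sufficiency the strategies are also close --- your pivotal identity $\phi_n S^m = \phi_{n+m}$ is exactly the monomial case of the paper's Lemma~\ref{lem-mul} --- but the paper and you diverge on how to pass from monomials to general symbols. The paper first proves $\Gamma_q[P]\Gamma_q[Q]=\Gamma_q[PQ]$ for trigonometric $P$ and analytic $Q$ by a direct entry count, then extends to $f,g\in L^\infty$ via a weak-star continuity lemma (Lemma~\ref{lem-weak-star}). You instead ``multiply the expansions'' $\sum_n\widehat{\bar f}(n)\phi_n$ and $\sum_m\widehat g(m)S^m$ directly. This step is not free: those series do not converge in operator norm (partial Fourier sums of an $L^\infty$ function need not be uniformly bounded), so the formal product-and-reindex needs an entry-wise Fubini argument. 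Concretely, for fixed $(\sigma_1,\sigma_2)$ one must check that $\sum_\tau |\Gamma_q[\bar f](\sigma_1,\tau)\,\Gamma_q[g](\tau,\sigma_2)|<\infty$ and that regrouping by $(n,m)=(|\sigma_1|-|\tau|,\,|\tau|-|\sigma_2|)$ is legitimate; this follows from Cauchy--Schwarz in $\ell^2(\Z)$ since $\widehat f,\widehat g\in\ell^2$, together with the branching count already implicit in $\phi_n S^m=\phi_{n+m}$. Once that is said, your argument is complete and slightly more direct than the paper's weak-star route; the paper's approach, in turn, packages the same convergence issue into a reusable lemma.
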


By using the method of full Fock space, we are able to  obtain more precise information for $\Gamma_q[f]$.   Namely, we have

\begin{theorem}\label{thm-direct-sum}
Let $q\ge 2$.  Then the operators $\Gamma_q[f]$ for all $f\in L^\infty(\T)$ are simultaneously unitarily equivalent to the direct sum of countably infinitely many standard Toeplitz operatars $\Gamma_1[f] = T(f)$. That is, there exists a unitary operator $U: \ell^2(T_q) \rightarrow  \bigoplus_{k=1}^\infty \ell^2(\N) $ such that
\[
\Gamma_q[f]  = U^{-1}
\left[
\begin{array}{cccc}
\Gamma_1[f] & & &
\\
& \Gamma_1[f]& &
\\
& & \Gamma_1[f] &
\\
& & &  \ddots
\end{array}
 \right] U
 =
U^{-1}\left[
\begin{array}{cccc}
T(f) & & &
\\
& T(f) & &
\\
& & T(f) &
\\
& & &  \ddots
\end{array}
 \right] U.
\]
\end{theorem}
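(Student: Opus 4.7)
The plan is to realize $\ell^2(T_q)$ as the full Fock space of $\mathbb{C}^q$ and to exhibit every branching-Toeplitz operator as the functional calculus of one canonical isometry, which I then diagonalize via the Wold decomposition. For $i = 1, \ldots, q$, let $R_i$ be the creation operator on $\ell^2(T_q)$ that sends $\delta_\sigma$ to $\delta_{\sigma i}$ (the $i$-th child of $\sigma$), and set $T := q^{-1/2}(R_1 + \cdots + R_q)$. The orthogonality $R_i^* R_j = \delta_{ij} I$ shows that $T$ is an isometry, and a direct computation gives $T^n \delta_\sigma = q^{-n/2} \sum_{|\tau|=n} \delta_{\sigma\tau}$. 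Comparing matrix entries with \eqref{def-T-f} yields $\Gamma_q[e^{in\theta}] = T^n$ and $\Gamma_q[e^{-in\theta}] = (T^*)^n$ for every $n \geq 0$, and hence, by linearity,
\[
\Gamma_q[p] = \sum_{n \geq 0} \widehat{p}(n)\, T^n \;+\; \sum_{n \geq 1} \widehat{p}(-n)\, (T^*)^n
\]
for every trigonometric polynomial $p$.

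Next I would apply the Wold decomposition to $T$. From $T^* \delta_\eta = q^{-1/2}\delta_{\mathrm{par}(\eta)}$ for $\eta \neq o$ and $T^* \delta_o = 0$, the wandering subspace is
\[
\mathcal{L} := \ker T^{*} = \Big\{ v \in \ell^2(T_q) : \sum_{i=1}^q v_{\eta i} = 0 \text{ for every } \eta \in T_q \Big\}.
\]
This subspace contains $\delta_o$ and, for every vertex $\eta$, a $(q-1)$-dimensional family of vectors supported on the children of $\eta$, so $\dim \mathcal{L} = \aleph_0$. The isometry $T$ is also pure, because every vector in $T^n \ell^2(T_q)$ vanishes on vertices of generation $<n$, whence $\bigcap_{n\ge 0} T^n \ell^2(T_q) = \{0\}$. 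The Wold decomposition therefore produces a unitary $U : \ell^2(T_q) \to \bigoplus_{k=1}^\infty \ell^2(\N)$ with $UTU^{-1} = \bigoplus_{k=1}^\infty S$, where $S$ is the unilateral shift on $\ell^2(\N)$.

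Combining the two steps, for any trigonometric polynomial $p$ one obtains
\[
U \Gamma_q[p] U^{-1} = \bigoplus_{k=1}^\infty \Big( \sum_{n \geq 0} \widehat{p}(n) S^n + \sum_{n \geq 1} \widehat{p}(-n)(S^*)^n \Big) = \bigoplus_{k=1}^\infty T(p),
\]
which is the stated identity for trigonometric symbols. The remaining step, and the main technical obstacle, is to promote this intertwining from trigonometric polynomials to arbitrary $f \in L^\infty(\T)$, since the Fourier series of a bounded symbol does not generally converge in operator norm. I would approximate $f$ by its Fej\'er means $\sigma_N f$, which are trigonometric polynomials with $\|\sigma_N f\|_\infty \leq \|f\|_\infty$ and $\widehat{\sigma_N f}(n) \to \widehat{f}(n)$ for each $n$. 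By Theorem \ref{thm-all-symbol} the families $\{\Gamma_q[\sigma_N f]\}$ and $\{T(\sigma_N f)\}$ are uniformly norm-bounded, and because each matrix entry of $\Gamma_q[g]$ or $T(g)$ in the natural basis is a single (rescaled) Fourier coefficient of $g$, both sequences converge in the weak operator topology to $\Gamma_q[f]$ and $T(f)$ respectively. With $U$ fixed, passing to the WOT limit in $U \Gamma_q[\sigma_N f] U^{-1} = \bigoplus_k T(\sigma_N f)$ (a dominated-convergence argument handles the infinite direct sum) yields the theorem.
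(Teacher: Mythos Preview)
Your proof is correct and follows essentially the same route as the paper: identify $\Gamma_q[e^{i\theta}]$ with a pure isometry of infinite multiplicity, decompose it as a direct sum of unilateral shifts, extend to trigonometric polynomials by the identities $\Gamma_q[e^{in\theta}]=T^n$ and $\Gamma_q[e^{-in\theta}]=(T^*)^n$, and then pass to general $f\in L^\infty(\T)$ by a weak-operator limit. The only notable difference is packaging: the paper realizes $T$ as the creation operator $\ell\big(\tfrac{e_1+\cdots+e_q}{\sqrt{q}}\big)$ on the full Fock space, rotates it to $\ell(e_1)$ by a unitary $\mathcal{F}(V)$, and then decomposes $\ell(e_1)$ by hand, thereby obtaining a completely explicit $U$; you invoke the Wold decomposition directly, which is slightly more abstract but equally valid. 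For the passage to $L^\infty$ the paper argues weak-star to WOT continuity of $f\mapsto U^{-1}(\bigoplus T(f))U$ without appealing to Theorem~\ref{thm-all-symbol}, whereas you use Fej\'er means together with the uniform bound $\|\Gamma_q[\sigma_N f]\|=\|\sigma_N f\|_\infty$ supplied by Theorem~\ref{thm-all-symbol}; since that theorem is proved independently in the paper, this is a legitimate shortcut.
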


\begin{remark}
Clearly, Theorem \ref{thm-all-symbol} and Theorem \ref{thm-gen-mul}  can be derived from Theorem  \ref{thm-direct-sum} and the classical results of Brown and Halmos  \cite{BH-1963}. However,  the proofs (without using Theorem  \ref{thm-direct-sum}) of Theorem \ref{thm-all-symbol} and Theorem \ref{thm-gen-mul} seem to be of independent interests and are included in this paper.
\end{remark}

\begin{remark}
The unitary operator $U$ in the statement of Theorem \ref{thm-direct-sum} can be written explicitly. See the proof of Theorem \ref{thm-direct-sum}.
\end{remark}

\subsection{Corollaries to Theorem \ref{thm-direct-sum}}

Theorem \ref{thm-direct-sum} has the following immediate corollaries, the routine proofs of which will all be omitted.

\begin{corollary}\label{cor-isometry}
$\Gamma_q[f]$ is an isometry on $\ell^2(T_q)$ if and only if $f$ is an inner function.
\end{corollary}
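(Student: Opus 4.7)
The plan is to deduce the corollary immediately from Theorem \ref{thm-direct-sum} together with the classical characterization of isometric Toeplitz operators on the Hardy space $H^2(\T)$.

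By Theorem \ref{thm-direct-sum}, there exists a unitary operator $U: \ell^2(T_q) \to \bigoplus_{k=1}^\infty \ell^2(\N)$ realizing $\Gamma_q[f]$ as the infinite direct sum $\bigoplus_{k=1}^\infty T(f)$. Unitary conjugation preserves the property of being an isometry, and a direct sum $\bigoplus_k A_k$ of bounded operators is an isometry if and only if each summand $A_k$ is (one simply tests against vectors supported on a single component). Consequently, $\Gamma_q[f]$ is an isometry on $\ell^2(T_q)$ if and only if the classical Toeplitz operator $T(f)$ is an isometry on $\ell^2(\N)$. Via the Fourier-transform identification recorded in diagram \eqref{2-way-Toep}, this latter property is equivalent to the Toeplitz operator $T_f$ on $H^2(\T)$ being an isometry.

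By the classical theorem of Brown and Halmos \cite{BH-1963}, $T_f$ is an isometry on $H^2(\T)$ if and only if $f$ is an inner function, i.e.\ $f\in H^\infty(\T)$ with $|f|=1$ almost everywhere on $\T$, which yields the claim. There is essentially no obstacle here: the argument is a direct unpacking of Theorem \ref{thm-direct-sum}, which is precisely why the authors describe the proof as routine and omit it. As an alternative self-contained route that avoids invoking Brown--Halmos directly, one could establish the ``if'' direction using Theorem \ref{thm-gen-mul}: for inner $f$ one has $f\in H^\infty(\T)$, so $\Gamma_q[f]^*\Gamma_q[f] = \Gamma_q[\bar f]\Gamma_q[f] = \Gamma_q[\bar f f] = \Gamma_q[1] = I$; the ``only if'' direction can be recovered by testing the isometry condition on the root vector $\delta_o$ and its $q$-ary descendants to force $\widehat{f}(n)=0$ for $n<0$, hence $f\in H^\infty(\T)$, after which the same multiplicativity argument yields $|f|^2=1$ a.e.
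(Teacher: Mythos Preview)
Your proof is correct and follows exactly the intended approach: the paper explicitly states that this corollary is an immediate consequence of Theorem \ref{thm-direct-sum} with a routine (omitted) proof, and your reduction to the classical Brown--Halmos characterization of isometric Toeplitz operators via the direct-sum decomposition is precisely that routine argument. Your alternative self-contained route is also sound, though the ``only if'' sketch could use one more line of detail (computing $\|\Gamma_q[f]\delta_\sigma\|^2$ for $|\sigma|=m$ and letting $m$ vary forces the negative Fourier coefficients to vanish one by one).
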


\begin{corollary}
Let $p, q \ge 2$ be any two integers and $f\in L^\infty(\T)$.  Then the operators $\Gamma_q[f]$ and $\Gamma_p[f]$ on $\ell^2(T_q)$ and $\ell^2(T_p)$ respectively are unitarily equivalent.
\end{corollary}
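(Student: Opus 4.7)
The plan is to derive the corollary as an immediate consequence of Theorem \ref{thm-direct-sum}, with essentially no additional work required. The theorem provides, for each integer $r \ge 2$, an explicit unitary operator
\[
U_r : \ell^2(T_r) \longrightarrow \bigoplus_{k=1}^\infty \ell^2(\N)
\]
that simultaneously (i.e.\ for all symbols $f \in L^\infty(\T)$) intertwines $\Gamma_r[f]$ with the countably infinite direct sum $\bigoplus_{k=1}^\infty T(f)$. The key observation is that the codomain $\bigoplus_{k=1}^\infty \ell^2(\N)$ and the model operator $\bigoplus_{k=1}^\infty T(f)$ do not depend on $r$: the multiplicity of the standard Toeplitz operator is countably infinite regardless of the branching degree.

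With this in hand, the proof is a one-line composition. I would apply Theorem \ref{thm-direct-sum} to both $q$ and $p$ to obtain unitaries $U_q$ and $U_p$ as above, and then define
\[
V := U_p^{-1} \circ U_q : \ell^2(T_q) \longrightarrow \ell^2(T_p).
\]
Then $V$ is a unitary operator, and for any $f \in L^\infty(\T)$ one has $V \Gamma_q[f] V^{-1} = U_p^{-1} \bigl(\bigoplus_{k=1}^\infty T(f)\bigr) U_p = \Gamma_p[f]$, establishing the unitary equivalence.

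There is no real obstacle here: all the work has already been invested in Theorem \ref{thm-direct-sum}, which supplies the crucial model-theoretic fact that the canonical decomposition of $\Gamma_r[f]$ produces a copy of $T(f)$ with multiplicity independent of $r$. The only minor point worth remarking is that the unitary $V$ constructed above is independent of the symbol $f$, so one in fact obtains a simultaneous unitary equivalence between the full families $\{\Gamma_q[f]\}_{f \in L^\infty(\T)}$ and $\{\Gamma_p[f]\}_{f \in L^\infty(\T)}$.
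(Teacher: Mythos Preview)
Your proposal is correct and is exactly the routine argument the paper has in mind: the paper omits the proof entirely, stating that this corollary (along with the others in that subsection) is an immediate consequence of Theorem~\ref{thm-direct-sum}. Your composition $V = U_p^{-1} \circ U_q$ is precisely the one-line derivation implicit in the paper's remark, and your observation that $V$ is independent of $f$ (giving simultaneous unitary equivalence) is a nice bonus.
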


\begin{corollary}
For any $f\in L^\infty(\T)$, we have the following coincidence of spectra:
\[
\spec(\Gamma_q[f]) =  \spec(\Gamma_1[f])=\spec(T(f)).
\]
\end{corollary}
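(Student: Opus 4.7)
The plan is to deduce this corollary directly from Theorem \ref{thm-direct-sum} together with the elementary fact that the spectrum of a countable direct sum of copies of a single bounded operator $A$ coincides with $\spec(A)$. Since spectrum is a unitary invariant, Theorem \ref{thm-direct-sum} reduces the problem to showing
\[
\spec\Bigl(\bigoplus_{k=1}^\infty T(f)\Bigr) = \spec(T(f)).
\]

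For this, I would fix $\lambda \in \C$ and observe that the operator $\lambda \cdot I - \bigoplus_{k=1}^\infty T(f)$ is just $\bigoplus_{k=1}^\infty (\lambda \cdot I - T(f))$. If $\lambda \notin \spec(T(f))$, then $\lambda \cdot I - T(f)$ has a bounded inverse $R_\lambda$, and the operator $\bigoplus_{k=1}^\infty R_\lambda$ is a bounded operator on $\bigoplus_{k=1}^\infty \ell^2(\N)$ (with the same norm as $R_\lambda$) which inverts the direct sum. Conversely, if $\lambda \in \spec(T(f))$, then one summand already fails to be invertible, so the whole direct sum is not invertible either. Therefore the two spectra coincide, which combined with Theorem \ref{thm-direct-sum} yields $\spec(\Gamma_q[f]) = \spec(T(f)) = \spec(\Gamma_1[f])$.

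There is no real obstacle here; the entire content of the corollary sits inside Theorem \ref{thm-direct-sum}, and the only thing to verify is the direct-sum identity for spectra, which is completely standard. This matches the author's remark that the proofs of the corollaries following Theorem \ref{thm-direct-sum} are routine and omitted.
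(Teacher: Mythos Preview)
Your proof is correct and is exactly the routine argument the paper has in mind: the corollary is listed immediately after Theorem \ref{thm-direct-sum} with the explicit remark that its proof is omitted because it follows directly from that theorem. The only content is the standard observation that $\spec\bigl(\bigoplus_{k=1}^\infty A\bigr)=\spec(A)$, which you have spelled out correctly.
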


\begin{corollary}
Let $q \ge 2$ and $f\in L^\infty(\T)$. Then  the operator $\Gamma_q[f]$ is Fredholm if and only if it  is  invertible. Moreover, $\Gamma_q[f]$ is invertible if and only if so is $T(f)$.
\end{corollary}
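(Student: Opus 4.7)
The plan is to reduce both statements to properties of the classical Toeplitz operator $T(f)$ via the unitary equivalence of Theorem \ref{thm-direct-sum}. Under that equivalence, $\Gamma_q[f]$ is identified with the diagonal operator $\bigoplus_{k=1}^\infty T(f)$ acting on $\bigoplus_{k=1}^\infty \ell^2(\N)$, so questions about kernels, cokernels, range and invertibility can be read off summand-by-summand.

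First I would treat the second assertion (invertibility of $\Gamma_q[f]$ vs.\ invertibility of $T(f)$). If $T(f)$ is invertible with bounded inverse of norm $c$, then $\bigoplus_{k=1}^\infty T(f)^{-1}$ is bounded with the same norm $c$ and provides a two-sided inverse of $\Gamma_q[f]$. Conversely, if $\Gamma_q[f]$ is invertible, restricting to the first summand (which is a reducing subspace under the diagonal decomposition) shows $T(f)$ is invertible.

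Next I would prove the equivalence of Fredholmness and invertibility. Invertible $\Rightarrow$ Fredholm is trivial. For the converse, suppose $\Gamma_q[f]$ is Fredholm. Under the unitary equivalence,
\[
\ker \Gamma_q[f] \;\cong\; \bigoplus_{k=1}^\infty \ker T(f), \qquad \ker \Gamma_q[f]^* \;\cong\; \bigoplus_{k=1}^\infty \ker T(f)^*,
\]
and both must be finite dimensional. Since the direct sums are countably infinite, this forces $\ker T(f) = \{0\}$ and $\ker T(f)^* = \{0\}$; the latter is equivalent to $T(f)$ having dense range. Moreover, since $\Gamma_q[f]$ is Fredholm it has closed range, hence is bounded below on $(\ker \Gamma_q[f])^\perp$ by some constant $c>0$; but the norm of a vector in a single summand is preserved in the direct sum, so $T(f)$ is bounded below by the same constant $c$ on $(\ker T(f))^\perp = \ell^2(\N)$, giving $T(f)$ closed range. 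Combined with the density of its range, this proves $T(f)$ is surjective, and together with injectivity one concludes $T(f)$ is invertible. By the first step, $\Gamma_q[f]$ is invertible as well.

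There is no serious obstacle: the entire argument is a direct unwinding of Theorem \ref{thm-direct-sum} together with standard facts about direct sums of identical operators (a countable direct sum $\bigoplus A$ has finite-dimensional kernel only if $\ker A = \{0\}$, and has closed range iff $A$ has closed range with the same lower bound). The only point requiring minor care is the observation that closedness of the range transfers between $A$ and $\bigoplus A$ with a uniform constant, which is immediate from the norm identity $\|(x_k)\|^2 = \sum_k \|x_k\|^2$.
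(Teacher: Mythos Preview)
Your argument is correct and is precisely the routine deduction from Theorem~\ref{thm-direct-sum} that the paper has in mind; the paper omits the proof entirely, stating only that this corollary (together with the neighboring ones) follows immediately from the direct-sum decomposition. Your handling of the closed-range transfer is the one nontrivial detail, and you address it correctly.
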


\begin{corollary}\label{cor-semi-comm}
Let $q\ge 2$ and $f, g\in L^\infty(\T)$. Then $\Gamma_q[\bar{f}] \Gamma_q[g] - \Gamma_q[\bar{f}g]$ is compact if and only if  $\Gamma_q[\bar{f}] \Gamma_q[g]  =  \Gamma_q[\bar{f}g]$ and hence if and only if either $f \in H^\infty(\T)$ or $g \in H^\infty(\T)$.
\end{corollary}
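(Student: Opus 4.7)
The plan is to reduce the problem to the classical Toeplitz setting via Theorem~\ref{thm-direct-sum}, and then exploit the fact that an infinite direct sum of copies of a single operator is compact only in the trivial case. More precisely, let $U : \ell^2(T_q) \to \bigoplus_{k=1}^\infty \ell^2(\N)$ be the unitary operator provided by Theorem~\ref{thm-direct-sum}. Since the same $U$ simultaneously intertwines all operators $\Gamma_q[h]$ for $h \in L^\infty(\T)$ with the countably infinite direct sum $\bigoplus_{k=1}^\infty T(h)$, linearity and composition give
\[
U \bigl(\Gamma_q[\bar f]\,\Gamma_q[g] - \Gamma_q[\bar f g]\bigr)U^{-1} \;=\; \bigoplus_{k=1}^\infty \bigl(T(\bar f)T(g) - T(\bar f g)\bigr).
\]

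Next I would invoke the elementary observation that for any bounded operator $S$ on a separable Hilbert space, the countably infinite orthogonal direct sum $\bigoplus_{k=1}^\infty S$ is compact if and only if $S=0$. Indeed, if $S \ne 0$, pick a unit vector $v$ with $\|Sv\| \ge \|S\|/2 > 0$; placing $v$ in the $k$-th summand for each $k$ produces an orthonormal sequence mapped by $\bigoplus_k S$ to vectors of norm at least $\|S\|/2$, contradicting the compactness characterization (compact operators send weakly null sequences to norm-null sequences). Consequently, $\Gamma_q[\bar f]\Gamma_q[g] - \Gamma_q[\bar f g]$ is compact if and only if $T(\bar f)T(g) - T(\bar f g) = 0$.

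Finally, the equality $T(\bar f)T(g) = T(\bar f g)$ is exactly the classical Brown--Halmos identity, which by \cite[Thm~8]{BH-1963} holds if and only if either $f \in H^\infty(\T)$ or $g \in H^\infty(\T)$ (alternatively, this can also be read off from Theorem~\ref{thm-gen-mul}, whose special case $q=1$ is the Brown--Halmos theorem, but even without that one can apply Theorem~\ref{thm-gen-mul} after lifting the equality back through $U$). Chaining the three equivalences yields the triple equivalence claimed in Corollary~\ref{cor-semi-comm}, and simultaneously shows that compactness forces the semicommutator to vanish identically.

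The only nontrivial step is the direct-sum compactness observation, which is however completely routine; the rest is bookkeeping built on top of Theorem~\ref{thm-direct-sum}. This is why the paper signals that the proof is omitted as a ``routine'' corollary.
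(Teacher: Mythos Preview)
Your argument is correct and is precisely the routine derivation from Theorem~\ref{thm-direct-sum} that the paper has in mind when it omits the proof: conjugate by $U$, reduce to an infinite direct sum of the classical semi-commutator, observe that such a direct sum is compact only when the summand vanishes, and finish with Brown--Halmos (or equivalently Theorem~\ref{thm-gen-mul}).
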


\begin{corollary}\label{cor-inv-sub}
The family of invariant subspaces for the operator $\Gamma_q[e^{i\theta}]$ can be naturally parametrized by the set of sequences $(\Theta_n)_{n=1}^\infty$ of inner functions on $\T$.
\end{corollary}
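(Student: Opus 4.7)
The plan is to deduce the corollary directly from Theorem \ref{thm-direct-sum} combined with Beurling's classical theorem on invariant subspaces of the unilateral shift. The key observation is that for the symbol $f(\theta) = e^{i\theta}$, the standard Toeplitz operator $T(e^{i\theta})$ is precisely the unilateral shift $S$ on $\ell^2(\N)$; equivalently, via the Fourier transform in \eqref{2-way-Toep}, it is the multiplication operator $M_{e^{i\theta}}$ on the Hardy space $H^2(\T)$. By Theorem \ref{thm-direct-sum}, the unitary operator $U$ conjugates $\Gamma_q[e^{i\theta}]$ to the countable direct sum $\bigoplus_{k=1}^\infty S$ acting on $\bigoplus_{k=1}^\infty H^2(\T)$, so the problem reduces to describing the natural invariant subspaces of this direct-sum shift.

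On each individual summand, Beurling's theorem asserts that every non-zero invariant subspace of $S$ on $H^2(\T)$ is of the form $\Theta H^2(\T)$ for an inner function $\Theta$, uniquely determined up to a unimodular constant; with the convention that $\Theta = 0$ denotes the trivial subspace $\{0\}$, one then has a clean bijection between (equivalence classes of) inner functions and invariant subspaces. Applying this coordinate-wise, each sequence $(\Theta_n)_{n=1}^\infty$ of inner functions yields an invariant subspace
\[
\mathcal{M}(\Theta_1,\Theta_2,\dots) := U^{-1}\Big(\bigoplus_{k=1}^\infty \Theta_k H^2(\T)\Big) \subset \ell^2(T_q)
\]
for $\Gamma_q[e^{i\theta}]$. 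Verifying that this map is a bijection onto the indicated family (modulo the standard unimodular ambiguity in the choice of each $\Theta_n$) is the desired parametrization; both directions are routine once the componentwise Beurling theorem is in hand.

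The only subtlety worth flagging, rather than a genuine obstacle, is that one should explicitly exhibit the summands $\bigoplus_{k=1}^\infty H^2(\T)$ inside $\ell^2(T_q)$ using the concrete form of $U$ given in the proof of Theorem \ref{thm-direct-sum}: each factor corresponds to a specific geodesic-type slice of $T_q$, so the parametrization by sequences of inner functions arises organically from the tree structure rather than being an abstract artifact of a unitary equivalence. With this geometric picture in place, the corollary is an immediate and essentially notational consequence of Theorem \ref{thm-direct-sum} and Beurling's theorem, which is exactly why its proof was announced as routine and therefore omitted.
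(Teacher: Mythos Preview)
Your overall approach matches the paper's: Theorem~\ref{thm-direct-sum} reduces $\Gamma_q[e^{i\theta}]$ to a countable direct sum of unilateral shifts, and one then appeals to Beurling. The gap is in your claim that the map $(\Theta_n)_{n\ge 1} \mapsto U^{-1}\bigl(\bigoplus_{k} \Theta_k H^2(\T)\bigr)$ is a bijection onto \emph{all} invariant subspaces. It is not: already for two copies of the shift, the graph subspace
\[
\mathcal{N}=\bigl\{(f,\, e^{i\theta} f): f\in H^2(\T)\bigr\}\subset H^2(\T)\oplus H^2(\T)
\]
is closed and $(S\oplus S)$-invariant, yet contains no nonzero vector of the form $(g,0)$ or $(0,h)$ and hence cannot be written as $\Theta_1 H^2(\T)\oplus \Theta_2 H^2(\T)$. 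The correct description of \emph{all} invariant subspaces of a shift of infinite multiplicity is the Beurling--Lax--Halmos theorem, which requires operator-valued inner functions rather than sequences of scalar ones; this is precisely what the remark following the corollary is pointing to when it says the statement can be ``made more precise'' and cites Popescu \cite{Pop-1989} and Davidson--Pitts \cite{DP-1999}.

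So the forward direction of your argument (each sequence of inner functions produces an invariant subspace, and the assignment is injective up to unimodular constants) is fine and is essentially all that the routine omitted proof covers. The asserted surjectivity, however, is false; the corollary as literally stated should be read as exhibiting a natural \emph{family} of invariant subspaces rather than the full lattice.
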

\begin{remark}The principal criterion for the invertibility of $T(f)$ was given by Widom \cite{Widom-1960} and Devinatz \cite{Dev-1964}. For more equivalent conditions, we refer to Nikolski  \cite[Thm 3.3.6]{Nikolski-20}. We also note that detailed descriptions of the spectrum of $T(f)$ can be found in Douglas \cite[Chapter 7]{Douglas}.
\end{remark}

\begin{remark}
Our Corollary \ref{cor-semi-comm} stands in sharp contrast with the classical result on the Hardy space $H^2(\T)$. Axler, Chang and Sarason \cite{ACS-1978} (for the sufficient part) and Volberg \cite{Vol-1982} (for the necessary part) show that the semi-commutator $T_f T_g-T_{fg}$ is compact on $H^2(\T)$ if and only if $H^\infty[\bar{f}]\cap H^{\infty}[g]\in H^\infty(\T)+C(\T)$, where $H^{\infty}[g]$ is the closed subalgebra in $L^\infty(\T)$ generated by $H^\infty(\T)$ and $g$.
\end{remark}

\begin{remark}
Using the classical result of Beurling, we can make Corollary \ref{cor-inv-sub} more precise by giving  an explicit description of all invariant subspaces of $\Gamma_q[e^{i\theta}]$. Such kind of result has a significant overlap with \cite[Thm 2.3]{Pop-1989} and  \cite[Thm 2.1]{DP-1999}. In fact,  using the relation between invariant subspaces and wandering subspaces, Popescu \cite{Pop-1989} firstly obtains a Beurling-Lax-Halmos type characterization of the invariant subspaces under the left creation operators on the full Fock space. However, our point of view is somewhat different and our proof seems to be more elementary and simpler.  For further developments along this line, one can consult
\cite{Pop-2010}, \cite{Pop-2017}, \cite{Pop-2018} and \cite{Pop-2019}.
\end{remark}

\subsection{Generalizations of Theorem \ref{thm-direct-sum}}
 Let us identify $T_q$ with the free unital semi-group $\F_q^{+}$  generated $q$ free elements $s_1, \cdots, s_q$ (the neutral element is denoted by $e\in \F_q^{+}$) by fixing any  bijection
\begin{align}\label{def-iota}
\iota: \F_q^+ \rightarrow T_q
\end{align}
 with the following properties:
\begin{itemize}
\item  $\iota (e) = o$ the root vertex of $T_q$;
\item  the children of $\iota(w)$ are exactly $\iota(s_1 w), \cdots, \iota (s_q w)$ for any word $w\in \F_q^+$.
\end{itemize}

Let $\BS(\C^q)$ denote the unit sphere in $\C^q$. That is,
\[
\BS(\C^q): = \Big\{ a = (a_1, \cdots, a_q)\in \C^q \Big| \sum_{k=1}^q |a_k|^2 = 1\Big\}.
\]
Given $a\in \BS(\C^q)$ and a non-empty word $w = s_{i_1} s_{i_2} \cdots s_{i_n} \in \F_q^{+}$, we define
\[
[w](a): = a_{i_1} a_{i_2} \cdots a_{i_n} \in \C
\]
and for the empty word $e\in \F_q^{+}$, we set $[e](a)=1$.
For any formal Fourier series $f$ given in \eqref{fourier-series}, define a kernel $\Gamma_{a}[f]: \F_q^{+} \times \F_q^{+}\rightarrow \C$ by
\begin{align}\label{def-gamma-a-f}
\Gamma_{a}[f] (u, v): = \left\{
\begin{array}{cc}
 \widehat{f}(|u| - |v|) \cdot [w](a), &   \text{if $u = w v$}
\vspace{2mm}
\\
 \widehat{f}(|u| - |v|)  \cdot \overline{[w](a)}, &    \text{if $v = w u$}
\vspace{2mm}
\\
0, & \text{otherwise}
\end{array}
\right..
\end{align}
In particular, in the above notation and using the identification \eqref{def-iota},  we have
\[
\Gamma_q[f] = \Gamma_{(\frac{1}{\sqrt{q}}, \cdots, \frac{1}{\sqrt{q}})}[f].
\]
\begin{theorem}\label{thm-generalization}
For any $a\in \BS(\C^q)$, the kernel $\Gamma_{a}[f]$ defines a bounded operator on $\ell^2(\F_q^{+})$ if and only if $f\in L^\infty(\T)$. Moreover, there exists a unitary operator $U_a: \ell^2(\F_q^{+}) \rightarrow  \bigoplus_{k=1}^\infty \ell^2(\N) $ such that for all $f\in L^\infty(\T)$, we have
\begin{align}\label{gamma-direct-gamma}
\Gamma_{a}[f]  = U_a^{-1}
\left[
\begin{array}{cccc}
\Gamma_1[f] & & &
\\
& \Gamma_1[f]& &
\\
& & \Gamma_1[f] &
\\
& & &  \ddots
\end{array}
 \right] U_a.
\end{align}
In particular, for any $a\in \BS(\C^q)$, the operators $\Gamma_q[f]$ and $\Gamma_{a}[f]$ are simultaneously unitarily equivalent for all $f\in L^\infty(\T)$.
\end{theorem}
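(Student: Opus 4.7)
The plan is to reduce Theorem \ref{thm-generalization} to the classical Hardy-space theory via the Wold decomposition of a suitably weighted left-creation isometry on the full Fock space of $\C^q$, generalizing the special case $a = (\tfrac{1}{\sqrt{q}},\ldots,\tfrac{1}{\sqrt{q}})$ treated in Theorem \ref{thm-direct-sum}.

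First I would identify $\ell^2(\F_q^{+})$ with the full Fock space $\bigoplus_{n\ge 0}(\C^q)^{\otimes n}$ via $\delta_{s_{i_1}\cdots s_{i_n}}\leftrightarrow e_{i_1}\otimes\cdots\otimes e_{i_n}$, and for $a\in\BS(\C^q)$ define the weighted left-creation operator $L_a := \sum_{i=1}^q a_i L_i$, where $L_i\xi := e_i\otimes\xi$. Using $L_i^*L_j = \delta_{ij}I$ one obtains $L_a^*L_a = \sum_i|a_i|^2 I = I$, so $L_a$ is an isometry. Iterating gives $L_a^n\delta_v = \sum_{|w|=n}[w](a)\delta_{wv}$ and $(L_a^*)^n\delta_v = \sum_{v=wu,\,|w|=n}\overline{[w](a)}\,\delta_u$; comparing entry-by-entry with \eqref{def-gamma-a-f} yields $\Gamma_a[e^{in\theta}] = L_a^n$ for $n\ge 0$ and $\Gamma_a[e^{-in\theta}] = (L_a^*)^n$ for $n\ge 1$. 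Consequently, for every trigonometric polynomial $p$, the operator $\Gamma_a[p]$ on $\ell^2(\F_q^{+})$ is exactly the finite sum $\sum_{n\ge 0}\widehat{p}(n)L_a^n + \sum_{n\ge 1}\widehat{p}(-n)(L_a^*)^n$.

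Next I would apply the Wold decomposition to $L_a$. Since $L_a^n$ maps $\delta_v$ into the subspace of words of length $\ge n+|v|$, the intersection $\bigcap_{n\ge 1}L_a^n\ell^2(\F_q^{+})$ is trivial, so $L_a$ has no unitary part. The wandering subspace $\mathcal{W}_a := \ell^2(\F_q^{+})\ominus L_a\ell^2(\F_q^{+})$ is computed by decomposing along the blocks $V_v := \spann\{\delta_{s_1v},\ldots,\delta_{s_qv}\}$: the range $L_a\ell^2(\F_q^{+})$ intersects each $V_v$ in exactly the one-dimensional line $\C\cdot L_a\delta_v$, while $\delta_e$ is orthogonal to $\Ran\,L_a$. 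Hence
\[
\mathcal{W}_a \;=\; \C\delta_e \oplus \bigoplus_{v\in\F_q^{+}}\bigl(V_v\ominus \C\, L_a\delta_v\bigr),
\]
which has countably infinite dimension for any $a\in\BS(\C^q)$ (including the degenerate cases where several coordinates vanish). Choosing an orthonormal basis $\{\eta_k\}_{k\ge 1}$ of $\mathcal{W}_a$, the family $\{L_a^n\eta_k\}_{n\ge 0,\,k\ge 1}$ is an orthonormal basis of $\ell^2(\F_q^{+})$, and the unitary $U_a:\ell^2(\F_q^{+})\to\bigoplus_{k=1}^{\infty}\ell^2(\N)$ sending $L_a^n\eta_k$ to the $n$-th standard basis vector of the $k$-th summand intertwines $L_a$ with $\bigoplus_{k=1}^{\infty}S$, where $S$ is the unilateral shift on $\ell^2(\N)$.

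Combining the two steps, for each trigonometric polynomial $p$ we get $U_a\Gamma_a[p]U_a^{-1} = \bigoplus_k\bigl(\sum_{n\ge 0}\widehat{p}(n)S^n + \sum_{n\ge 1}\widehat{p}(-n)(S^*)^n\bigr) = \bigoplus_k T(p) = \bigoplus_k \Gamma_1[p]$, which is \eqref{gamma-direct-gamma} on polynomial symbols. To pass to general $f\in L^\infty(\T)$ I would use Fej\'er means $\sigma_N f$: they satisfy $\|\sigma_N f\|_\infty\le\|f\|_\infty$ and $\widehat{\sigma_N f}(k)\to\widehat{f}(k)$ for every $k$, so the polynomial case gives the uniform bound $\|\Gamma_a[\sigma_N f]\| = \|T(\sigma_N f)\|\le\|f\|_\infty$; combined with pointwise matrix-entry convergence this shows that $\Gamma_a[f]$ extends to a bounded operator of norm $\|f\|_\infty$, and a weak-operator-topology limit argument transfers \eqref{gamma-direct-gamma} from $\sigma_N f$ to $f$. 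The converse is immediate since the single summand $T(f) = \Gamma_1[f]$ on the right-hand side of \eqref{gamma-direct-gamma} forces $f\in L^\infty(\T)$ whenever $\Gamma_a[f]$ is bounded. The main obstacle I anticipate is the Wold step, in particular the uniform-in-$a$ computation of $\dim\mathcal{W}_a$ when some of the $a_i$ vanish; the block decomposition along the $V_v$'s is the key device that handles all $a\in\BS(\C^q)$ simultaneously.
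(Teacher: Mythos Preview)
Your forward direction is essentially the paper's argument: both identify $\Gamma_a[e^{i\theta}]$ with the creation operator $\ell(a)$ on the full Fock space, show this isometry is unitarily equivalent to $\bigoplus_{k\ge 1} S$, and then extend to general symbols by Fej\'er approximation and weak limits. The only difference is in how the shift decomposition is obtained. The paper first conjugates $\ell(a)$ to $\ell(e_1)$ by the second-quantized unitary $\mathcal{F}(V_a)$ of any $V_a\in U(q)$ with $V_a a = e_1$ (Lemma~\ref{lem-unit-shift}), and then decomposes $\ell(e_1)$ explicitly (Lemma~\ref{lem-cre-direct}); you instead run the Wold decomposition of $L_a$ directly and compute the wandering subspace $\mathcal{W}_a$. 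Your route is a little more hands-on but equally valid, and your block decomposition along the $V_v$'s cleanly handles all $a\in\BS(\C^q)$ at once.

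There is, however, a genuine gap in your converse. You argue that if $\Gamma_a[f]$ is bounded then, by \eqref{gamma-direct-gamma}, each summand $T(f)$ is bounded and hence $f\in L^\infty(\T)$; but you have only established \eqref{gamma-direct-gamma} \emph{under the hypothesis} $f\in L^\infty(\T)$, so this is circular. The paper avoids this by a gauge-transformation argument: it shows that $\Gamma_{a,t}[f](u,v):=e^{-it|u|}\Gamma_a[f](u,v)e^{it|v|}$ equals $\Gamma_a[f_t]$ and has the same norm, and then averages against the Fej\'er kernel to deduce $\|\mathcal{F}_N*f\|_\infty = \|\Gamma_a[\mathcal{F}_N*f]\| \le \|\Gamma_a[f]\|$ for all $N$, whence $f\in L^\infty(\T)$. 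Within your own framework there is a simpler fix: take $\eta_1=\delta_e\in\mathcal{W}_a$ and observe that for \emph{any} formal Fourier series $f$ one has the purely algebraic identity
\[
\langle \Gamma_a[f]\,L_a^n\delta_e,\; L_a^m\delta_e\rangle \;=\; \widehat{f}(m-n),\qquad m,n\ge 0,
\]
since $L_a^n\delta_e$ and $L_a^m\delta_e$ are finitely supported and $\sum_{|w|=k}|[w](a)|^2=1$. Thus the compression of the kernel $\Gamma_a[f]$ to the orthonormal system $\{L_a^n\delta_e\}_{n\ge 0}$ is exactly $T(f)$, and boundedness of $\Gamma_a[f]$ immediately gives $\sup_n\|T_n(f)\|\le\|\Gamma_a[f]\|$, hence $f\in L^\infty(\T)$.
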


The following result is a generalization of Theorem \ref{thm-QW-I}.
\begin{theorem}\label{thm-gen-pos}
Let $a\in \BS(\C^q)$. Then the kernel $\Gamma_{a}[f]$ is positive definite if and only if the formal Fourier series $f$ represents a positive Radon measure on $\T$.
\end{theorem}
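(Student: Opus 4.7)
The plan is to combine a direct Fock-space embedding argument for the necessity direction with a Fejér approximation for the sufficiency direction, reducing both to the classical Herglotz--Bochner characterization of positive-definite Toeplitz kernels on $\N$. Working with the identification $\ell^2(\F_q^{+}) \cong \mathcal{F}(\C^q)$ (full Fock space), let $L_a := \sum_{i=1}^q a_i L_i$, where $L_i$ are the left creation operators. Since $\|a\|_2 = 1$, the operator $L_a$ is an isometry and satisfies $L_a^*\delta_e = 0$. Define the orthonormal sequence
\[
\xi_n := L_a^n \delta_e = \sum_{|w|=n} [w](a)\,\delta_w, \qquad n \ge 0,
\]
and note that each $\xi_n$ is \emph{finitely supported} (on the sphere $\{w : |w|=n\}$). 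A short Fock-space computation, reducing by linearity in $\widehat{f}$ to the monomial case $f = e^{ik\theta}$ and using $L_a\xi_m = \xi_{m+1}$ together with $L_a^*\xi_m = \xi_{m-1}$ (with the convention $\xi_{-1}:=0$, which comes from $L_a^*\delta_e=0$), yields the embedding identity
\[
\sum_{u,v \in \F_q^{+}} \Gamma_a[f](u,v)\,\xi_m(v)\overline{\xi_n(u)} \;=\; \widehat{f}(n-m), \qquad m,n \ge 0,
\]
valid for every formal Fourier series $f$.

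For the necessity direction, assume $\Gamma_a[f]$ is positive definite as a kernel. Given any finite sequence $(c_n)_{n=0}^N$, the vector $\eta := \sum_{n=0}^N c_n \xi_n$ is a finite linear combination of finitely supported vectors and is therefore itself finitely supported in $\F_q^{+}$. Applying positive-definiteness of $\Gamma_a[f]$ to $\eta$ and invoking the embedding identity gives
\[
0 \;\le\; \sum_{u,v} \Gamma_a[f](u,v)\,\eta(v)\overline{\eta(u)} \;=\; \sum_{m,n=0}^N c_m \overline{c_n}\,\widehat{f}(n-m).
\]
Hence the classical Toeplitz kernel $\Gamma_1[f] = [\widehat{f}(k-l)]_{k,l\in\N}$ is positive definite, and the Herglotz--Bochner theorem then provides a positive Radon measure $\mu$ on $\T$ with $\widehat{f}(n) = \int_\T e^{-in\theta}\,d\mu(\theta)$.

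For the sufficiency direction, approximate a given positive Radon measure $\mu$ by its Fej\'er means $f_N := K_N * \mu$, which are nonnegative continuous functions, hence in $L^\infty(\T)$. By Theorem \ref{thm-generalization}, $\Gamma_a[f_N]$ is unitarily equivalent to $\bigoplus_{k=1}^\infty T(f_N)$, and since $f_N \ge 0$ each $T(f_N)$ is a positive operator on $\ell^2(\N)$ (via $\langle T(g)h,h\rangle_{H^2} = \int g|h|^2\,d\theta/2\pi$), so $\Gamma_a[f_N]\ge 0$. As $\widehat{f_N}(k) \to \widehat{\mu}(k)$ for every $k \in \Z$, the kernels $\Gamma_a[f_N]$ converge entry-wise to $\Gamma_a[\mu]$, and positive-definiteness is preserved under entry-wise limits on any finite index set, so $\Gamma_a[\mu]$ is positive definite.

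I do not anticipate a serious obstacle. The only technical point worth care is that the embedding identity must hold for arbitrary (possibly unbounded) formal Fourier series $f$, but this is automatic: the left-hand side is a \emph{finite} sum since $\xi_m, \xi_n$ have finite support, and it is linear in the Fourier coefficients $\widehat{f}(k)$, reducing the verification to the bounded monomial cases $f = e^{ik\theta}$ already handled by the Fock-space computation.
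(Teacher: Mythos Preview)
Your proof is correct. The sufficiency direction (Fej\'er approximation plus the direct-sum decomposition of Theorem \ref{thm-generalization}) is exactly the paper's argument. The necessity direction, however, takes a different and more direct route than the one the paper points to.

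For necessity the paper follows the template of \S\ref{sec-new-pf}: it applies the gauge transformation $\Gamma_{a,t}[f](u,v)=e^{-it|u|}\Gamma_a[f](u,v)e^{it|v|}=\Gamma_a[f_t]$, averages against the Fej\'er kernel to obtain positive definiteness of $\Gamma_a[\mathcal{F}_N*f]$ with $\mathcal{F}_N*f\in\C[e^{i\theta},e^{-i\theta}]$, and then invokes the direct-sum identity \eqref{trigo-dir-sum-general} to deduce that the standard Toeplitz operator $T(\mathcal{F}_N*f)$ is positive, hence $\mathcal{F}_N*f\ge 0$; a compactness argument on positive measures finishes. Your argument instead tests $\Gamma_a[f]$ against the finitely supported vectors $\xi_n=L_a^n\delta_e=\sum_{|w|=n}[w](a)\delta_w$ to extract the scalar Toeplitz kernel $[\widehat f(n-m)]_{m,n}$ in one step, and then quotes Herglotz--Bochner directly. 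This is the natural generalization to arbitrary $a\in\BS(\C^q)$ of the radial-subspace reduction in Proposition \ref{prop-bdd} (indeed, for $a=(q^{-1/2},\dots,q^{-1/2})$ your $\xi_n$ coincide with the $h_n$ there). Your approach avoids the gauge-transformation/averaging machinery entirely and needs neither boundedness of $\Gamma_a[f]$ nor the direct-sum structure for the necessity direction; the paper's route, on the other hand, keeps both directions within a single Fej\'er-averaging framework and reuses the operator-theoretic identity \eqref{trigo-dir-sum-general} already established.
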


We also have the following operator-valued results. Let $A = (A_1, \cdots, A_q)$ be a $q$-tuple of operators on a Hilbert space $\mathcal{H}$. Define for any non-empty word $w = s_{i_1} s_{i_2} \cdots s_{i_n} \in \F_q^{+}$,
\[
[w](A): = A_{i_1} A_{i_2} \cdots A_{i_n} \in B(\mathcal{H}).
\]
For the empty word $e\in \F_q^{+}$,  set $[e](A)=Id \in B(\mathcal{H})$. Then for any formal Fourier series $f$ given in \eqref{fourier-series}, we can define an operator-valued kernel $\Gamma_{A}[f]: \F_q^{+} \times \F_q^{+}\rightarrow B(\mathcal{H})$ by
\begin{align}\label{def-gamma-a-f-op}
\Gamma_{A}[f] (u, v): = \left\{
\begin{array}{lc}
 \widehat{f}(|u| - |v|) \cdot [w](A), &   \text{if $u = w v$}
\vspace{2mm}
\\
 \widehat{f}(|u| - |v|)  \cdot ([w](A))^*, &    \text{if $v = w u$}
\vspace{2mm}
\\
0, & \text{otherwise}
\end{array}
\right..
\end{align}

We refer to Paulsen \cite[Chapters 2, 3]{Paulsen-CB} for the notion of complete positivity for linear maps between operator systems.

\begin{theorem}\label{prop-pos-op}
Let $A = (A_1, \cdots, A_q)\in B(\mathcal{H})^q$ be a $q$-tuple such that
\begin{align}\label{A-le-1}
\Big\| \sum_{k=1}^q A_k^* A_k\Big\| \le 1.
\end{align}
Then the unital map
\[
\begin{array}{cccc}
\Gamma_A: & L^\infty(\T) & \rightarrow  &  B(\ell^2(\F_q^{+}) \otimes \mathcal{H})
\vspace{2mm}
\\
& f & \mapsto & \Gamma_A[f]
\end{array}
\]
is contractive and thus is completely positive.
\end{theorem}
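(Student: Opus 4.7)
The plan is to reduce the theorem to the classical Sz.-Nagy dilation theorem by realizing each $\Gamma_A[f]$ as the compression of the functional calculus of a single unitary operator. First I would set $H := \ell^2(\F_q^+) \otimes \mathcal{H}$, write $L_1,\ldots,L_q$ for the left creation operators on $\ell^2(\F_q^+)$ (so $L_i e_w = e_{s_i w}$ and $L_i^* L_j = \delta_{ij}\,\mathrm{Id}$), and define
$$T := \sum_{i=1}^q L_i \otimes A_i \;\in\; B(H).$$
The identity $T^*T = \mathrm{Id} \otimes \sum_{i=1}^q A_i^* A_i$ combined with hypothesis \eqref{A-le-1} forces $\|T\| \le 1$.

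The first key computation is to show that the single contraction $T$ already encodes every $\Gamma_A[e^{in\theta}]$: expanding
$$T^n = \sum_{|w|=n} L_w \otimes [w](A) \qquad\text{and}\qquad (T^*)^n = \sum_{|w|=n} L_w^* \otimes [w](A)^*,$$
with $L_w := L_{i_1}\cdots L_{i_n}$ for $w = s_{i_1}\cdots s_{i_n}$, and matching these against \eqref{def-gamma-a-f-op} on the basis vectors $e_v \otimes h$ --- using $L_w e_v = e_{wv}$ together with the fact that $L_w^* e_u$ equals $e_v$ exactly when $u = wv$ and vanishes otherwise --- should yield $\Gamma_A[e^{in\theta}] = T^n$ for $n \ge 0$ and $\Gamma_A[e^{-in\theta}] = (T^*)^n$ for $n > 0$.

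Next I would apply Sz.-Nagy's dilation theorem to the single contraction $T$: there exist a Hilbert space $K \supset H$ and a unitary $U \in B(K)$ with $T^n = P_H U^n|_H$ for every $n \ge 0$, so by adjunction $(T^*)^n = P_H U^{-n}|_H$. For any trigonometric polynomial $p$ this gives
$$\Gamma_A[p] \;=\; \sum_{n \in \Z} \widehat{p}(n)\, P_H U^n|_H \;=\; P_H\, p(U)|_H,$$
and therefore $\|\Gamma_A[p]\| \le \|p(U)\| \le \|p\|_\infty$ by unitarity of $U$. To pass to a general $f \in L^\infty(\T)$, I would use the Fej\'er means $\sigma_N f$, which are trigonometric polynomials with $\|\sigma_N f\|_\infty \le \|f\|_\infty$ and $\widehat{\sigma_N f}(n) \to \widehat{f}(n)$ for every $n$. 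Combining the entry-wise convergence $\Gamma_A[\sigma_N f](u,v) \to \Gamma_A[f](u,v)$ with the uniform bound $\|\Gamma_A[\sigma_N f]\| \le \|f\|_\infty$, and testing the resulting sesquilinear forms on finitely supported vectors, shows that the kernel $\Gamma_A[f]$ extends to a bounded operator on $H$ of norm at most $\|f\|_\infty$.

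The passage from contractivity to complete positivity is then automatic: every unital contractive linear map into $B(H)$ is positive, and every positive linear map from a commutative $C^*$-algebra --- here $L^\infty(\T)$ --- into $B(H)$ is completely positive; see Paulsen \cite[Chapters 2--3]{Paulsen-CB}. The main obstacle I anticipate is the bookkeeping in the intermediate step identifying $\Gamma_A[e^{in\theta}]$ with $T^n$, where the asymmetric roles of $w$ (as prefix vs.\ suffix of $u$ or $v$) in \eqref{def-gamma-a-f-op} must be carefully reconciled with the order of multiplication in $L_{i_1}\cdots L_{i_n}$; once this matching is verified, Sz.-Nagy's dilation and standard $C^*$-facts do the rest.
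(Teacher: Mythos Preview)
Your proposal is correct and follows essentially the same approach as the paper: the operator $T=\sum_i L_i\otimes A_i$ you introduce is exactly the one appearing in the paper's Lemma~\ref{lem-crea-tensor}, your identifications $\Gamma_A[e^{in\theta}]=T^n$ and $\Gamma_A[e^{-in\theta}]=(T^*)^n$ reproduce the paper's equalities \eqref{def-gamma-A-n}--\eqref{def-gamma-A-adj}, and your explicit appeal to Sz.-Nagy dilation is precisely what underlies the paper's citation of ``von Neumann's inequality and \cite[Thm~2.6]{Paulsen-CB}''. The only cosmetic difference is that you pass to general $f\in L^\infty(\T)$ via Fej\'er means while the paper uses a weak-star continuity argument, but both routes accomplish the same thing.
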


\begin{corollary}\label{cor-op-pos}
Let $A = (A_1, \cdots, A_q)\in B(\mathcal{H})^q$ be a $q$-tuple satisfying \eqref{A-le-1}.  Then for any positive Radon measure $\mu$ on $\T$, the kernel $\Gamma_A[\mu]$ is positive definite.
\end{corollary}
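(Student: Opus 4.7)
The plan is to deduce Corollary \ref{cor-op-pos} from Theorem \ref{prop-pos-op} by a standard Poisson approximation of $\mu$. Given a positive Radon measure $\mu$ on $\T$, I consider, for each $r \in (0, 1)$, the convolution $f_r := P_r \ast \mu$, where $P_r$ is the Poisson kernel on $\T$. Then $f_r$ is a non-negative continuous function on $\T$, hence an element of $L^\infty(\T)$, and its Fourier coefficients are $\widehat{f_r}(n) = r^{|n|}\, \widehat{\mu}(n)$.

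First, since $f_r \ge 0$ and $\Gamma_A \colon L^\infty(\T) \to B(\ell^2(\F_q^{+}) \otimes \mathcal{H})$ is unital completely positive by Theorem \ref{prop-pos-op}, the bounded operator $\Gamma_A[f_r]$ is positive. In particular, its operator-valued kernel is positive definite: for every finite subset $F \subset \F_q^{+}$ and every family of vectors $(v_u)_{u \in F}$ in $\mathcal{H}$,
$$
\sum_{u, v \in F} \langle \Gamma_A[f_r](u, v)\, v_v,\, v_u \rangle_{\mathcal{H}} \ge 0.
$$

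Next, I would observe directly from the definition \eqref{def-gamma-a-f-op} that, for any $(u, v) \in \F_q^{+} \times \F_q^{+}$,
$$
\Gamma_A[f_r](u, v) = r^{||u| - |v||}\, \Gamma_A[\mu](u, v),
$$
so $\Gamma_A[f_r](u, v) \to \Gamma_A[\mu](u, v)$ in the operator norm on $B(\mathcal{H})$ as $r \to 1^-$. Because the quadratic form above is a \emph{finite} sum, passing to the limit $r \to 1^-$ preserves the inequality and yields positive definiteness of the kernel $\Gamma_A[\mu]$.

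The only mild subtlety is the distinction between positive definiteness of the kernel — a pointwise, finite-matrix condition that is meaningful even when $\Gamma_A[\mu]$ is not a bounded operator (for example when $\mu$ has a nontrivial singular part) — and the operator-theoretic positivity $\Gamma_A[f_r] \ge 0$ supplied by Theorem \ref{prop-pos-op} for bounded symbols. The Poisson regularization circumvents this by producing bounded symbols with the correct Fourier data, and taking the entrywise limit $r \to 1^-$ transfers positive definiteness back to the possibly unbounded kernel $\Gamma_A[\mu]$.
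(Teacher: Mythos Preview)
Your proof is correct and follows essentially the same approach as the paper: regularize $\mu$ by convolution with a positive approximate identity to obtain non-negative bounded symbols, apply Theorem~\ref{prop-pos-op}, and pass to the entrywise limit. The only difference is that the paper uses the F\'ejer kernels $\mathcal{F}_N$ (so $\widehat{\mathcal{F}_N*\mu}(n) = (1-\tfrac{|n|}{N+1})\,\widehat{\mu}(n)$ for $|n|\le N$) instead of the Poisson kernels $P_r$, which is an inessential variation.
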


\begin{remark}
Consider the special case $\mu = \delta_0$, where $\delta_0$ stands for the Dirac mass at $0 \in \R/2 \pi \Z$. Popescu \cite[Cor. 2.2]{Pop-1996} shows that $\Gamma_A[\delta_0]$ is positive definite if and only if \eqref{A-le-1} is satisfied.  Corollary \ref{cor-op-pos} provides an alternative proof of the sufficiency of the condition \eqref{A-le-1} for the positive definiteness of $\Gamma_A[\delta_0]$. On the other hand,  the necessity of the condition \eqref{A-le-1} for the positive definiteness of $\Gamma_A[\delta_0]$ can be obtained directly as follows: suppose that $\Gamma_A[\delta_0]$ is positive definite, then so is its restriction on the finite set $(\{e, s_1, \cdots, s_q\})^2$. That is,  the  operator-coefficient matrix $\left[
\begin{array}{cc}
I & \alpha
\\
\alpha^* & I_q
\end{array}
\right]$ is positive definite,  where $\alpha = (\Gamma_A[\delta_0](e, s_1), \cdots, \Gamma_A[\delta_0](e, s_q) )  = (A_1^*, \cdots, A_q^*)$ and $I_q = \mathrm{diag}(Id, \cdots, Id)$.  Then by using the equality
\[
 \left[
\begin{array}{cc}
Id & -\alpha
\\
0 & I_q
\end{array}
\right]  \left[
\begin{array}{cc}
Id & \alpha
\\
\alpha^* & I_q
\end{array}
\right] \left[
\begin{array}{cc}
Id & 0
\\
- \alpha^*  & I_q
\end{array}
\right] =
\left[
\begin{array}{cc}
Id - \alpha \alpha^* & 0
\\
0 & I_q
\end{array}
\right],
\]
we obtain that $Id - \alpha \alpha^*$ is positive definite, which implies \eqref{A-le-1}.
\end{remark}

\subsection{Branching-Toeplitz matrices}\label{sec-finite-dim}
The equality \eqref{norm-2-side} can be interpretated as
\begin{align}\label{gamma-q-1}
\| \Gamma_q[f]\|  = \| \Gamma_1[f]\|=\| T(f)\|, \quad \forall f \in L^\infty(\T).
\end{align}
It  is then  natural to wonder whether the equality \eqref{gamma-q-1} has a finite dimensional analogue. More precisely, fix any integer $n\ge 1$. Recall the definition \eqref{B-n-S-n} of the finite subset $\B_n(T_q) \subset T_q$ and consider the truncated kernel $\Gamma_q^{(n)}[f]$ on $\B_n(T_q)$:
\begin{align}\label{trunc-toep}
\Gamma_q^{(n)}[f]:=\Gamma_q[f]\big|_{\mathbb{B}_n(T_q)\times\mathbb{B}_n(T_q)}.
\end{align}
Recall also the standard Toeplitz matrix $T_n(f)$ of size $(n+1)\times (n+1)$ defined by
\begin{align}\label{Toep-rep}
T_n(f) :   = \Gamma_1^{(n)}[f] =  \big[ \widehat{f}(k-l)\big]_{0\leq k, l\leq n}.
\end{align}

\begin{question}
For all $f\in L^\infty(\T)$,  do we have
\begin{align}\label{finite-norm-eq}
\| \Gamma_q^{(n)}[f]\|  = \| \Gamma_1^{(n)}[f] \| =  \| T_n(f)\|?
\end{align}
\end{question}

\begin{remark}
The norm of a Toeplitz matrix $T_n(f)$ can be estimated by a distance  $d_n(f)$ introduced in   Nikolski \cite[Thm 5.2.1, p. 247]{Nikolski-20}, but its precise value  is not clear in general (see \cite[Section 5.2.1, p. 246]{Nikolski-20}).
\end{remark}

We shall see (in Corolloary \ref{cor-dec-finite} below) that the  one-sided inequality always holds:
\begin{align}\label{stan-le-gamma}
\| T_n(f)\| \le \| \Gamma_q^{(n)}[f]\|.
\end{align}
However, the equality \eqref{finite-norm-eq} does not hold in general.
\begin{example}\label{ex-counter}
Let $q = 2$ and $n = 1$.  Consider the function $f  = b + a e^{i \theta} - a e^{-i \theta}$ with $a> 0, b> 0$ and $a^2 + b^2 =1$.   Then
\[
\Gamma_1^{(1)} [f]  = \left[
\begin{array}{cc}
b & a
\\
-a & b
\end{array}
\right] \an \Gamma_2^{(1)}[f] = \left[
\begin{array}{ccc}
b & \frac{a}{\sqrt{2}} & \frac{a}{\sqrt{2}}
\\
-\frac{a}{\sqrt{2}} & b & 0
\\
-\frac{a}{\sqrt{2}} & 0 & b
\end{array}
\right],
\]
where  $\Gamma_2^{(1)}[f]$ is written as a $3 \times 3$ matrix by identifying $\B_1(T_2)$ with $\{0, 1, 2\}$.
By a direct computation, we have the following strict inequality
\[
\| \Gamma_2^{(1)}[f]\| = \sqrt{1 + \frac{a^2}{2}} > 1 = \| \Gamma_1^{(1)}[f]\|.
\]
\end{example}

On the other hand, fix an integer $n\ge 1$, the equality \eqref{finite-norm-eq} holds in the following cases:
\begin{itemize}
\item Case (A1): the coefficients $\widehat{f}(k)\ge 0$ for all integers $k$ with $|k|\le n$.
\item Case (A2): the finite Toeplitz matrix $T_n(f)$ (and thus  the finite branching-Toeplitz matrix $\Gamma_q^{(n)}[f]$) is Hermitian.
\item Case (A3): the symbol $f$ belongs to the Hardy space $H^\infty(\T)$.
\end{itemize}

\begin{theorem}\label{thm-con-ext}
Fix an integer $n\ge 1$.  Then  the equality \eqref{finite-norm-eq} holds in any of the three cases (A1), (A2) or (A3).
\end{theorem}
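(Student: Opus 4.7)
My plan is to establish the reverse inequality $\|\Gamma_q^{(n)}[f]\|\le\|T_n(f)\|$ in each of the three cases, since Corollary~\ref{cor-dec-finite} supplies the other direction. For Case~(A1), every entry of $\Gamma_q^{(n)}[f]$ is non-negative, so the operator norm is attained on a pair of non-negative vectors $v,w\in\ell^2(\B_n(T_q))$. I would associate to such a vector the \emph{generation profile} $\tilde v\in\ell^2(\{0,\dots,n\})$ defined by $\tilde v(k):=\bigl(\sum_{|\sigma|=k}v_\sigma^2\bigr)^{1/2}$. Exploiting that in $T_q$ each vertex at generation $l>k$ has a unique ancestor at generation $k$ while each vertex at generation $k$ has exactly $q^{l-k}$ descendants at generation $l$, a double application of Cauchy--Schwarz gives
\[
\sum_{|\sigma_1|=k,\,|\sigma_2|=l,\,\sigma_1\pless\sigma_2} q^{-(l-k)/2}\,w_{\sigma_1}v_{\sigma_2}\ \le\ \tilde w(k)\tilde v(l),
\]
the weight $q^{-(l-k)/2}$ exactly absorbing the $\sqrt{q^{l-k}}$ produced when Cauchy--Schwarz is applied over the $q^{l-k}$ descendants. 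Summing over $k,l$ and using $\widehat f(k-l)\ge 0$ yields
\[
\langle\Gamma_q^{(n)}[f]v,w\rangle\ \le\ \langle T_n(f)\tilde v,\tilde w\rangle\ \le\ \|T_n(f)\|\,\|v\|\,\|w\|.
\]

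For Cases~(A2) and~(A3), I would invoke Theorem~\ref{thm-direct-sum}. The crucial observation is the coinvariance of $\ell^2(\B_n(T_q))$ under $V:=\Gamma_q[e^{i\theta}]$: a direct computation shows $V^*\delta_\sigma=q^{-1/2}\delta_{\mathrm{parent}(\sigma)}$ for $\sigma\neq o$ and $V^*\delta_o=0$, so $V^*\ell^2(\B_n(T_q))\subset\ell^2(\B_{n-1}(T_q))$, and iterating gives $\ell^2(\B_n(T_q))\subset\ker(V^*)^{n+1}$. Under the unitary $U$ of Theorem~\ref{thm-direct-sum}, which intertwines $V$ with $\bigoplus_{k=1}^\infty S$, hence $(V^*)^{n+1}$ with $\bigoplus_{k=1}^\infty (S^*)^{n+1}$ (where $S$ is the unilateral shift on $\ell^2(\N)$), the kernel maps precisely onto $\bigoplus_{k=1}^\infty\ell^2([0,n])$. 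Writing $K_n:=U(\ell^2(\B_n(T_q)))$, we have $K_n\subset\bigoplus_{k=1}^\infty\ell^2([0,n])$, and the unitary equivalence gives $\Gamma_q^{(n)}[f]\simeq P_{K_n}\bigl(\bigoplus_{k=1}^\infty T(f)\bigr)P_{K_n}$. The inclusion $K_n\subset\bigoplus_{k=1}^\infty\ell^2([0,n])$ then lets us absorb the outer projections into $P_{\bigoplus\ell^2([0,n])}$, rewriting the operator as $P_{K_n}\bigl(\bigoplus_{k=1}^\infty T_n(f)\bigr)P_{K_n}$, whose norm is at most $\|T_n(f)\|$.

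The main obstacle I foresee is verifying the inclusion $K_n\subset\bigoplus_{k=1}^\infty\ell^2([0,n])$ starting from the abstract statement of Theorem~\ref{thm-direct-sum}; however, this reduces to the intertwining of $V^*$ with $\bigoplus_{k=1}^\infty S^*$, combined with the elementary coinvariance above. I also note that this second argument does not use hypotheses (A2) or (A3) explicitly and handles both cases uniformly in just a few lines; the Case~(A1) argument is nonetheless worth recording separately, as it is entirely elementary and independent of Theorem~\ref{thm-direct-sum}.
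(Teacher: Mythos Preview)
Your proof is correct in all parts, and both halves take a genuinely different route from the paper.

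For Case~(A1), the paper argues by contradiction: assuming the non-radial block has strictly larger norm, it picks a norming vector $v$ in $\C^{\B_n(T_q)}\ominus\C^{\B_n(T_q)}_{\rad}$, observes that $|v|$ is again norming (by non-negativity of the entries) but has nonzero radial projection, and derives a contradiction from the block form~\eqref{BT-block-form-finite}. Your direct Cauchy--Schwarz bound $\langle\Gamma_q^{(n)}[f]v,w\rangle\le\langle T_n(f)\tilde v,\tilde w\rangle$ for non-negative $v,w$ is an equally elementary and arguably cleaner alternative.

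For Cases~(A2) and~(A3), the paper appeals to the classical extension theorems---Carath\'eodory--Toeplitz (Theorem~\ref{thm-ext-pos-Toep}) for the Hermitian case and Carath\'eodory--F\'ejer--Schur (Theorem~\ref{thm-CF}) for the analytic case---to produce a symbol $g\in L^\infty(\T)$ with $T_n(g)=T_n(f)$ and $\|g\|_\infty=\|T_n(f)\|$, and then invokes the infinite-dimensional equality $\|\Gamma_q[g]\|=\|g\|_\infty$. Your compression argument via Theorem~\ref{thm-direct-sum} bypasses these extension results entirely: the coinvariance $\ell^2(\B_n(T_q))\subset\ker(\Gamma_q[e^{i\theta}]^*)^{n+1}$ is immediate from $V^*\delta_\sigma=q^{-1/2}\delta_{\mathrm{parent}(\sigma)}$, and under $U$ this kernel becomes exactly $\bigoplus_k\ell^2([0,n])$, so the inclusion $K_n\subset\bigoplus_k\ell^2([0,n])$ that you flagged as a potential obstacle indeed holds.

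As you note, your second argument nowhere uses hypothesis~(A2) or~(A3). It in fact proves $\|\Gamma_q^{(n)}[f]\|=\|T_n(f)\|$ for \emph{every} $f$ (replace $f$ by its degree-$n$ Fourier truncation so that Theorem~\ref{thm-direct-sum} applies), which is strictly stronger than Theorem~\ref{thm-con-ext} and is in tension with Example~\ref{ex-counter}. A direct check via Corollary~\ref{cor-dec-finite} confirms that the example is miscomputed: the orthocomplement $\C^{\B_1(T_2)}\ominus\C^{\B_1(T_2)}_{\rad}$ is one-dimensional, spanned by $(\delta_{\sigma_1}-\delta_{\sigma_2})/\sqrt{2}$, and $\Gamma_2^{(1)}[f]$ acts on it as multiplication by $\widehat f(0)=b$, whence $\|\Gamma_2^{(1)}[f]\|=\max(\|T_1(f)\|,b)=\max(1,b)=1$, not $\sqrt{1+a^2/2}$. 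Your argument is thus not merely an alternative proof but an improvement on the paper's statement.
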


In case (A1), Theorem \ref{thm-con-ext} will be proved directly  for a fixed integer $n \ge 1$ without using  any result on the infinite dimensional branching-Toeplitz operators.   While  in  case (A2) and case (A3), the proofs of Theorem \ref{thm-con-ext} rely not only on the classical results  on Carath\'eodory-Toeplitz extension problem  (see Theorem \ref{thm-ext-pos-Toep}),  Carath\'eodory-F\'ejer-Schur extension problem   (see Theorem \ref{thm-CF}) but also on  our previous result (Theorem \ref{thm-all-symbol}) on infinite dimensional branching-Toeplitz operators. 

From our proof of Theorem \ref{thm-con-ext} (in particular, see Lemma \ref{lem-inv-sub} and  Corollary \ref{cor-dec-finite} below), we also obtain the following corollaries. For any integer $n\ge 1$, define the space of radial vectors in $\C^{\B_n(T_q)}$ by
\begin{align}\label{finite-rad-sub}
\C_{\rad}^{\B_n(T_q)}: = \Big\{v = (v_\sigma)_{\sigma\in \B_n(T_q)} \in \C^{\B_n(T_q)}\Big| \text{$v_\sigma = v_\tau$ whenever $| \sigma| = | \tau|$}\Big\}.
\end{align}

\begin{corollary}\label{cor-oth-comp-finite}
Fix an integer $n\ge 1$.  In case (A1) or case (A2) or case (A3), we have
\[
\left \| \Gamma_q^{(n)}[f]\Big|_{\C^{\B_n(T_q)} \ominus \C^{\B_n(T_q)}_{\rad}} \right\| \le  \left \| \Gamma_q^{(n)}[f]\Big|_{\C^{\B_n(T_q)}_{\rad}} \right\|.
\]
\end{corollary}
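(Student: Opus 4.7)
The plan is to exploit the orthogonal decomposition $\C^{\B_n(T_q)} = \C^{\B_n(T_q)}_{\rad} \oplus (\C^{\B_n(T_q)} \ominus \C^{\B_n(T_q)}_{\rad})$, to show that it reduces $\Gamma_q^{(n)}[f]$ with the compression to the radial summand unitarily equivalent to $T_n(f)$, and then to invoke Theorem \ref{thm-con-ext}, which in each of the three cases (A1), (A2), (A3) guarantees $\|\Gamma_q^{(n)}[f]\| = \|T_n(f)\|$.

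First I would introduce the orthonormal basis
\[
e_m := q^{-m/2} \sum_{|\sigma| = m} \delta_\sigma, \qquad 0 \le m \le n,
\]
of $\C^{\B_n(T_q)}_{\rad}$. A short combinatorial computation based on \eqref{def-T-f} yields $\langle \Gamma_q^{(n)}[f] e_l, e_k\rangle = \widehat{f}(k-l)$ for all $0 \le k, l \le n$: assuming $k \le l$, there are exactly $q^l$ pairs $(\tau, \sigma) \in \mathcal{C}$ with $|\tau|=k$, $|\sigma|=l$, each contributing $q^{-(l-k)/2}\widehat{f}(k-l)$, and this precisely cancels the normalization $q^{-(k+l)/2}$ from $e_k$ and $e_l$. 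Hence the compression of $\Gamma_q^{(n)}[f]$ to $\C^{\B_n(T_q)}_{\rad}$, written in the basis $\{e_m\}$, coincides with $T_n(f) = \Gamma_1^{(n)}[f]$. The same calculation, combined with the identity $\Gamma_q^{(n)}[f]^* = \Gamma_q^{(n)}[\bar{f}]$ (immediate from \eqref{def-T-f}), shows that both $\Gamma_q^{(n)}[f]$ and its adjoint preserve $\C^{\B_n(T_q)}_{\rad}$, so it is a reducing subspace. This is presumably the content of Lemma \ref{lem-inv-sub} / Corollary \ref{cor-dec-finite} referenced in the statement.

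Since the radial subspace reduces $\Gamma_q^{(n)}[f]$, the operator decomposes orthogonally and
\[
\|\Gamma_q^{(n)}[f]\| = \max\left(\|T_n(f)\|,\; \left\| \Gamma_q^{(n)}[f]\big|_{\C^{\B_n(T_q)} \ominus \C^{\B_n(T_q)}_{\rad}}\right\|\right).
\]
In each of the cases (A1), (A2), (A3), Theorem \ref{thm-con-ext} gives $\|\Gamma_q^{(n)}[f]\| = \|T_n(f)\|$, so the second entry of the maximum cannot exceed the first. Since the first entry equals $\big\|\Gamma_q^{(n)}[f]|_{\C^{\B_n(T_q)}_{\rad}}\big\|$, this is exactly the desired inequality.

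The main obstacle is the first step: identifying the compression to the radial subspace with $T_n(f)$. This is a routine combinatorial verification, and in fact the factor $\sqrt{q^{-d(\sigma_1, \sigma_2)}}$ in \eqref{def-T-f} was engineered precisely to make the identification exact. All the real analytic work has already been carried out in Theorem \ref{thm-con-ext}; the corollary is essentially a bookkeeping consequence of the reducing decomposition.
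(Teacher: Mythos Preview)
Your proposal is correct and follows essentially the same route the paper indicates: the reducing decomposition from Lemma~\ref{lem-inv-sub}/Corollary~\ref{cor-dec-finite} gives the max formula \eqref{norm-sup-2}, and then Theorem~\ref{thm-con-ext} forces the non-radial block's norm to be bounded by $\|T_n(f)\|$, which equals the radial block's norm. The only minor asymmetry worth noting is that in the paper's treatment of Case~(A1) the logical flow is reversed---the inequality of the corollary is established first (via a norming-vector and positivity argument) and Theorem~\ref{thm-con-ext} is derived from it---whereas you invoke Theorem~\ref{thm-con-ext} as a black box uniformly across all three cases; since Theorem~\ref{thm-con-ext} is proved independently, your uniform derivation is perfectly sound and arguably cleaner.
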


 Recall that for a bounded operator  $A$ on $\mathcal{H}$,  a vector $v \in \mathcal{H}$ is called  a {\it norming vector} of $A$ if it satisfies
\[
\text{$\|v\| = 1$ and $\| Av\| = \| A\|$.}
\]
By compactness argument, any finite matrix  admits a norming vector. 

\begin{corollary}\label{cor-norming-vec}
Fix an integer $n\ge 1$.  In case (A1) or case (A2) or case (A3), there exists a norming vector for the matrix $\Gamma_q^{(n)}[f]$ which is a radial vector.
\end{corollary}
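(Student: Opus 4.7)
}
My plan is to combine the norm comparison of Corollary \ref{cor-oth-comp-finite} with a compactness argument on the radial subspace. The key additional ingredient is that the orthogonal decomposition
\[
\C^{\B_n(T_q)} \;=\; \C^{\B_n(T_q)}_{\rad} \;\oplus\; \bigl(\C^{\B_n(T_q)}\ominus \C^{\B_n(T_q)}_{\rad}\bigr)
\]
reduces the operator $\Gamma_q^{(n)}[f]$, so that its norm is exactly the maximum of the two restriction norms. Once this reduction is in place, Corollary \ref{cor-oth-comp-finite} identifies the larger block as the radial one, and a standard compactness argument on the finite-dimensional space $\C^{\B_n(T_q)}_{\rad}$ produces the desired radial norming vector.

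\paragraph{Step 1: reduction by the two subspaces.}
First I would observe that the entries of $\Gamma_q^{(n)}[f]$ depend only on the data $(|\sigma_1|,|\sigma_2|,d(\sigma_1,\sigma_2),\mathds{1}_{\mathcal{C}}(\sigma_1,\sigma_2))$. Consequently, for every graph automorphism $g$ of $T_q$ fixing the root $o$ (these permute siblings throughout the tree), the induced unitary $U_g$ on $\ell^2(\B_n(T_q))$ commutes with $\Gamma_q^{(n)}[f]$. The subspace of vectors fixed by all such $U_g$ is precisely $\C^{\B_n(T_q)}_{\rad}$, and its orthogonal complement is also a subrepresentation; hence both are invariant under $\Gamma_q^{(n)}[f]$. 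Equivalently, and matching the invariance statement used to establish Corollary \ref{cor-oth-comp-finite} (see Lemma \ref{lem-inv-sub}), the operator splits as a block diagonal
\[
\Gamma_q^{(n)}[f] \;=\; \Gamma_q^{(n)}[f]\big|_{\C^{\B_n(T_q)}_{\rad}} \;\oplus\; \Gamma_q^{(n)}[f]\big|_{\C^{\B_n(T_q)}\ominus \C^{\B_n(T_q)}_{\rad}},
\]
so that
\[
\bigl\|\Gamma_q^{(n)}[f]\bigr\| \;=\; \max\Bigl\{\, \bigl\|\Gamma_q^{(n)}[f]\big|_{\C^{\B_n(T_q)}_{\rad}}\bigr\|,\; \bigl\|\Gamma_q^{(n)}[f]\big|_{\C^{\B_n(T_q)}\ominus \C^{\B_n(T_q)}_{\rad}}\bigr\| \,\Bigr\}.
\]

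\paragraph{Step 2: applying the norm comparison and extracting a norming vector.}
In each of the three cases (A1), (A2), (A3), Corollary \ref{cor-oth-comp-finite} asserts that the second block has operator norm no larger than the first. Combined with the previous display, this forces
\[
\bigl\|\Gamma_q^{(n)}[f]\bigr\| \;=\; \bigl\|\Gamma_q^{(n)}[f]\big|_{\C^{\B_n(T_q)}_{\rad}}\bigr\|.
\]
Since $\C^{\B_n(T_q)}_{\rad}$ is finite dimensional, its unit sphere is compact, so there exists a radial unit vector $v\in \C^{\B_n(T_q)}_{\rad}$ with
\[
\bigl\|\Gamma_q^{(n)}[f]\,v\bigr\| \;=\; \bigl\|\Gamma_q^{(n)}[f]\big|_{\C^{\B_n(T_q)}_{\rad}}\bigr\| \;=\; \bigl\|\Gamma_q^{(n)}[f]\bigr\|,
\]
which is exactly the required radial norming vector. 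The only non-formal point in the whole argument is the invariance of $\C^{\B_n(T_q)}_{\rad}$ and of its orthogonal complement under $\Gamma_q^{(n)}[f]$; this is essentially built into the earlier machinery (Lemma \ref{lem-inv-sub}), and I do not anticipate any further obstacle beyond quoting it.
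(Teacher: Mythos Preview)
Your proof is correct and follows essentially the same approach as the paper: the block-diagonal decomposition with respect to $\C^{\B_n(T_q)}_{\rad}$ and its orthogonal complement (Lemma \ref{lem-inv-sub} and Corollary \ref{cor-dec-finite}) combined with the norm comparison of Corollary \ref{cor-oth-comp-finite}, followed by compactness on the finite-dimensional radial subspace. Your alternative justification of the invariance via root-preserving automorphisms is a pleasant extra, but not needed once Lemma \ref{lem-inv-sub} is in hand.
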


Our proof of Theorem \ref{thm-con-ext}  has the following by-product. For fixed integer $n\ge 1$ and formal Fourier series $f$,  consider the constant concerning the generalized Carath\'eodory-F\'ejer-Schur extension problem:
\[
c_{n}(f)  := \inf\Big\{\|g\|_{\infty}\Big|   \text{$g \in L^\infty(\T)$ and $ \widehat{g}(k) = \widehat{f}(k)$ for all integers $k$ with $-n \le k \le n$} \Big\}.
\]
Nikolskaya and Farforovskaya \cite[Thm 2.11]{NF-2003} prove that
\begin{align}\label{1-3-control}
\|T_n(f)\|\leq c_{n}(f)\leq 3\|T_n(f)\|.
\end{align}
For more on the generalized Carath\'eodory-F\'ejer-Schur extension problem, one can consult \cite{BT-2001, Volberg-2004, CS-2012, Bessonov-2015}. 

We have the following improvement on the lower bound of $c_n(f)$. Note that by  the one-sided inequality  \eqref{stan-le-gamma} and Example \ref{ex-counter}, Proposition \ref{prop-low-bdd} indeed provides an improvement  of the lower bound of $c_n(f)$ in general.
\begin{proposition}\label{prop-low-bdd}
For any integer $n\ge 1$ and any formal Fourier series $f$, we have 
\[
c_{n}(f) \ge \sup_{q \ge 2} \| \Gamma_q^{(n)}[f]\|.
\]
\end{proposition}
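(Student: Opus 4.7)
The plan is to exploit the fact that $\Gamma_q^{(n)}[f]$ only depends on the Fourier coefficients $\widehat{f}(k)$ for $|k|\le n$, combined with the identity $\|\Gamma_q[g]\| = \|g\|_\infty$ from Theorem \ref{thm-all-symbol}, to compare any bounded extension of the trigonometric data with the finite branching-Toeplitz matrix.

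First I would dispatch the trivial case: if $c_n(f) = +\infty$, the inequality is vacuous. So assume $c_n(f) < +\infty$ and fix an arbitrary $\varepsilon > 0$. By definition of $c_n(f)$, one can pick $g \in L^\infty(\T)$ with $\widehat{g}(k) = \widehat{f}(k)$ for all integers $k$ with $|k|\le n$, and with $\|g\|_\infty \le c_n(f) + \varepsilon$. The point to verify (a short but essential check) is that
\[
\Gamma_q^{(n)}[g] = \Gamma_q^{(n)}[f].
\]
Indeed, by the definition \eqref{def-T-f} of the branching-Toeplitz kernel, for any pair $(\sigma_1,\sigma_2) \in \mathbb{B}_n(T_q)\times\mathbb{B}_n(T_q)$ the value $\Gamma_q[h](\sigma_1,\sigma_2)$ only involves $\widehat{h}(|\sigma_1|-|\sigma_2|)$ with $||\sigma_1|-|\sigma_2|| \le \max(|\sigma_1|,|\sigma_2|) \le n$, together with the geometric factor $\sqrt{q^{-d(\sigma_1,\sigma_2)}}\mathds{1}_\mathcal{C}$, which is independent of the symbol. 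Thus the truncations of $\Gamma_q[f]$ and $\Gamma_q[g]$ on $\mathbb{B}_n(T_q)\times\mathbb{B}_n(T_q)$ coincide.

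Next, since $\Gamma_q^{(n)}[g]$ is the compression of the bounded operator $\Gamma_q[g]$ to the finite-dimensional subspace $\ell^2(\mathbb{B}_n(T_q)) \subset \ell^2(T_q)$, it has operator norm at most $\|\Gamma_q[g]\|$. Applying Theorem \ref{thm-all-symbol} to $g$ gives
\[
\|\Gamma_q^{(n)}[f]\| \;=\; \|\Gamma_q^{(n)}[g]\| \;\le\; \|\Gamma_q[g]\| \;=\; \|g\|_\infty \;\le\; c_n(f) + \varepsilon.
\]
Letting $\varepsilon \downarrow 0$ and then taking the supremum over all integers $q\ge 2$ yields the desired inequality $\sup_{q\ge 2} \|\Gamma_q^{(n)}[f]\| \le c_n(f)$.

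The argument is essentially a one-line compression estimate once the two ingredients (the coefficient-localization of the truncated kernel, and the sharp norm identity of Theorem \ref{thm-all-symbol}) are in place; there is no real obstacle. The only subtlety worth stating explicitly is the coefficient-localization observation $\Gamma_q^{(n)}[g] = \Gamma_q^{(n)}[f]$, which is what lets the infimum defining $c_n(f)$ interact at all with the fixed finite matrix $\Gamma_q^{(n)}[f]$.
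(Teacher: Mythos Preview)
Your proof is correct and follows essentially the same approach as the paper's own proof: both rely on the coefficient-localization $\Gamma_q^{(n)}[f]=\Gamma_q^{(n)}[g]$ for any admissible extension $g$, the compression estimate $\|\Gamma_q^{(n)}[g]\|\le\|\Gamma_q[g]\|$, and the norm identity $\|\Gamma_q[g]\|=\|g\|_\infty$ from Theorem~\ref{thm-all-symbol}. The only cosmetic difference is that you introduce an auxiliary $\varepsilon>0$ whereas the paper bounds directly by an arbitrary $g$ in the defining infimum; the arguments are otherwise identical.
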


\subsection{Application: branching-type stationary determinantal point processes}
Recall the definition of branching-type strong stationary stochastic process (abbr. branching-type SSSP) on $T_q$ introduced in  \cite{QW-I}. Set
\[
\partial T_q : = \Big\{\xi\Big|\text{$\xi$ is a geodesic ray in $T_q$ starting from the root}\Big\}.
\]
Clearly, each $\xi \in \partial T_q$, as a subset of $T_q$, can be canonically identified with the set $\N$.

\begin{definition}[Branching-type SSSP]
A stochastic process $(X_\sigma)_{\sigma \in T_q}$ on $T_q$ is called a branching-type SSSP if
\begin{itemize}
\item Restricted on every rooted geodesic ray $\xi \in \partial T_q$, we have a classical strong stationary  stochastic process $X^\xi: = (X_\sigma)_{\sigma \in \xi}$, by identifying canonically the subset $\xi\subset T_q$ with the set $\N$.
\item The family of these strong stationary stochastic processes $X^\xi$ share a common distribution. That is, for any pair $(\xi, \xi')$ of rooted geodesic rays, by using the natural identifications $\xi \simeq \N \simeq \xi'$, we have
\[
(X_\sigma)_{\sigma \in \xi} \stackrel{d}{=}  (X_{\sigma'})_{\sigma'\in \xi'}.
\]
\item For any pair of non-comparable vertices $\sigma, \tau \in T_q$, the random variables $X_\sigma, X_\tau$ are independent.
\end{itemize}
\end{definition}

 A complete classification of  Gaussian branching-type SSSP on $T_q$ is given in \cite{QW-I}. As an application of the previous results of the  present paper, we  construct a family of determinantal point processes (considered as stochastic processes of 0-1 valued random variables) which are branching-type SSSP on $T_q$ and are invariant under the action of  the group of automorphisms of $T_q$. For the background of determinantal point processes, the reader is referred to \cite{DPP-M, DPP-S,ST-palm2, ST-palm}.

\begin{proposition}\label{thm-BSSSP-DPP}
For any measurable function $f: \T \rightarrow [0, 1]$, the branching-Toeplitz kernel $\Gamma_q[f]$ represents a positive definite contractive operator on $\ell^2(T_q)$ and thus induces a determinantal point process on $T_q$ which is a branching-type SSSP and is invariant under the natural action of the group of automorphisms of $T_q$.
\end{proposition}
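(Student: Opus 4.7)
The plan has three pieces. First, to obtain a determinantal point process from $\Gamma_q[f]$: since $f(\theta)\,d\theta/(2\pi)$ is a positive Radon measure on $\T$, Theorem \ref{thm-QW-I} gives $\Gamma_q[f]\ge 0$, while $\|f\|_\infty\le 1$ combined with Theorem \ref{thm-all-symbol} yields $\|\Gamma_q[f]\|\le 1$. Hence $0\le\Gamma_q[f]\le I$, and by the Macchi--Soshnikov existence theorem (cf.\ \cite{DPP-M, DPP-S}) there exists a unique DPP $(X_\sigma)_{\sigma\in T_q}$, viewed as a family of $\{0,1\}$-valued random variables, whose correlation kernel is $\Gamma_q[f]$.

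Second, check the three axioms of a branching-type SSSP for this DPP. Fix a rooted geodesic ray $\xi\in\partial T_q$ and identify it with $\N$ via $\sigma\mapsto|\sigma|$. Every pair on $\xi$ lies in $\mathcal{C}$, so $\Gamma_q[f]\big|_{\xi\times\xi}$ has entries $q^{-|k-l|/2}\,\widehat{f}(k-l)$, a Toeplitz kernel on $\N$. The marginal on $\xi$ is again a DPP with the restricted kernel, and shift-invariance of the kernel propagates to all correlation functions $\det\bigl[q^{-|k_i-k_j|/2}\widehat{f}(k_i-k_j)\bigr]$, hence to all finite-dimensional distributions; thus $(X_\sigma)_{\sigma\in\xi}$ is strictly stationary. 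The restricted kernel does not depend on the choice of $\xi$, so marginals along distinct rays share a common law. For non-comparable $\sigma,\tau$ one has $\Gamma_q[f](\sigma,\tau)=0$, and the determinantal formula for the two-point correlation gives
\[
\E[X_\sigma X_\tau] = \Gamma_q[f](\sigma,\sigma)\,\Gamma_q[f](\tau,\tau) - |\Gamma_q[f](\sigma,\tau)|^2 = \E[X_\sigma]\,\E[X_\tau];
\]
for two $\{0,1\}$-valued random variables, vanishing covariance is equivalent to independence, yielding the third axiom.

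Third, for invariance under $\mathrm{Aut}(T_q)$: any root-fixing automorphism $\phi$ preserves the generation function $|\cdot|$, the graph distance $d(\cdot,\cdot)$, and the comparability relation $\mathcal{C}$, so reading off from \eqref{def-T-f} we get $\Gamma_q[f](\phi(\sigma_1),\phi(\sigma_2)) = \Gamma_q[f](\sigma_1,\sigma_2)$; if $U_\phi$ is the permutation unitary on $\ell^2(T_q)$ induced by $\phi$, then $U_\phi\,\Gamma_q[f]\,U_\phi^{-1} = \Gamma_q[f]$, and uniqueness of the DPP from its kernel transfers this invariance to the process. The only mildly nontrivial point is extracting strict stationarity along each ray from the Toeplitz structure of the restricted kernel, which is a standard consequence of the determinantal formula for correlation functions; no serious obstacle is anticipated, as the proof essentially packages Theorems \ref{thm-QW-I} and \ref{thm-all-symbol} together with the Macchi--Soshnikov theorem.
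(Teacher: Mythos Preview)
Your proof is correct and is precisely the argument the paper has in mind: the proposition is stated without an explicit proof in the paper, the phrase ``and thus induces'' signalling that the authors regard it as a direct consequence of Theorems~\ref{thm-QW-I} and~\ref{thm-all-symbol} together with the Macchi--Soshnikov existence theorem, exactly as you unpack it. Your verification of the three branching-type SSSP axioms and of the $\mathrm{Aut}(T_q)$-invariance from the explicit form~\eqref{def-T-f} of the kernel is the intended route.
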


\section{Branching-Toeplitz operators}

\subsection{Outline of the proofs}

We shall actually present two proofs of Theorem \ref{thm-all-symbol}, both of which rely on the positive definite criterion of the kernel $\Gamma_q[f]$ recalled in Theorem \ref{thm-QW-I}.

The first proof of Theorem \ref{thm-all-symbol} is outlined as follows.

\begin{itemize}
\item  {\bf Step 1}:  We first prove that if $\Gamma_q[f]$ is a bounded operator, then  $f\in L^\infty(\T)$ and
\begin{align}\label{low-bdd}
 \|f \|_\infty \le \|\Gamma_q[f]\|.
\end{align}
 For this purpose, we introduce  in \eqref{rad-subsp} a common invariant subspace  of {\it radial vectors}   $\ell^2(T_q)_\rad \subset \ell^2(T_q)$  for all operators in $B_\TT(\ell^2(T_q))$ and show that the restriction of any bounded operator $\Gamma_q[f]$ on the subspace $\ell^2(T_q)_\rad$ is unitarily equivalent to the standard Toeplitz operator on Hardy space with the same symbol $f$ and thus, by using the results on  standard Toeplitz operators,  we are able to conclude the proof of the inequality \eqref{low-bdd}.

Note that the inequality \eqref{low-bdd} for real valued functions $f\in L^\infty(\T)$ can also be obtained using Theorem \ref{thm-QW-I}.  See the Appendix of this paper.
\item  {\bf Step 2:} We then prove in Proposition \ref{prop-norm-eq} that for any {\it real-valued} bounded function $f$ on $\T$,  the  reverse inequality
\begin{align}\label{reverse-ineq}
\|\Gamma_q[f]\| \le \|f \|_\infty
\end{align}
holds. The proof of \eqref{reverse-ineq} for real-valued  function $f\in L^\infty(\T)$ relies on the positive definite criterion of the branching-Toeplitz kernels recalled in Theorem \ref{thm-QW-I}.

As a consequence,  see Corollary \ref{cor-set-eq} below,  by decomposing a complex-valued function into its real and imaginary parts, we obtain that any $f\in L^\infty(\T)$ induces a bounded $\Gamma_q[f]$ and  thus complete the proof of the set-theoretical equality \eqref{set-th-eq}.
\item {\bf Step 3:} By  Step 1 and Step 2, the equality \eqref{norm-2-side} holds for all real-valued function $f \in L^\infty(\T)$. It remains to prove that the equality \eqref{norm-2-side} can be extended to general complex-valued symbols $f \in L^\infty(\T)$. We shall prove in Proposition \ref{prop-norm-poly} that the equality \eqref{norm-2-side} holds for any {\it analytic trigonometric polynomial} $Q \in \C[e^{i\theta}]$. Our proof relies on 
the following {\it multiplicativity property} obtained in Lemma \ref{lem-mul}:
\[
(\Gamma_q[Q])^* \Gamma_q[Q]  = \Gamma_q[\bar{Q}] \Gamma_q[Q] =  \Gamma_q[|Q|^2],
\]
which is a special case of the following
\begin{align}\label{mul-P-Q}
\Gamma_q[P] \Gamma_q[Q]   =  \Gamma_q[PQ], \quad \forall P \in \C[e^{i \theta}, e^{-i \theta}], \forall Q \in \C[e^{i\theta}].
\end{align}
\item {\bf Step 4:} Finally, to complete the proof of  the equality \eqref{norm-2-side} for all $f\in L^\infty(\T)$, we show in Proposition \ref{prop-tri-symbol} that for any trigonometric polynomial $P \in \C[e^{i \theta}, e^{-i \theta}]$, by writing $P  = e^{-iN\theta} (e^{iN\theta} P)$ with $e^{iN\theta} P\in \C[e^{i\theta}]$ for large enough integer $N\ge 1$, we may apply \eqref{mul-P-Q}  to get
\[
\Gamma_q[P] = \Gamma_q[e^{-iN\theta}] \Gamma_q[e^{i N\theta} P]  = (\Gamma_q[e^{iN\theta}])^{*}  \Gamma_q[e^{i N\theta} P]
\]
and
\[
\|\Gamma_q[P]\|\le \| P\|_\infty.
\]
The case of general $f \in L^\infty(\T)$ can then be obtained using a standard  approximation argument with F\'ejer kernels. See \S \ref{sec-gen-symbol} for more details.
\end{itemize}

\bigskip

The second proof of Theorem \ref{thm-all-symbol} differs from the first one in that it replaces the above Step 3 and Step 4 by the following Step 3', where we combine our Theorem \ref{thm-QW-I} with the well-known Russo-Dye Theorem.  Recall that we call a linear map $\phi: \mathcal{A}\rightarrow \mathcal{B}$ between two $C^*$-algebras  positive if it sends all positive elments of $\mathcal{A}$ to positive elements of $\mathcal{B}$.

\begin{theorem}[{Russo-Dye, \cite{Russo-Dye} or \cite[Cor. 2.9]{Paulsen-CB}}]
Let $\mathcal{A}$ and $\mathcal{B}$ be $C^*$-algebras with unit and let $\phi: \mathcal{A} \rightarrow \mathcal{B}$ be a linear positive map. Then $\| \phi\| = \| \phi(1_{\mathcal{A}})\|$.
\end{theorem}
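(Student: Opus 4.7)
The plan is to establish the two-sided inequality. The bound $\|\phi(1_\mathcal{A})\| \le \|\phi\|$ is immediate since $\|1_\mathcal{A}\| = 1$. For the reverse inequality $\|\phi\| \le \|\phi(1_\mathcal{A})\|$, I would first observe that $\phi(1_\mathcal{A}) = 0$ forces $\phi = 0$: any positive $a \in \mathcal{A}$ satisfies $0 \le a \le \|a\| \cdot 1_\mathcal{A}$, so positivity of $\phi$ forces $\phi(a) = 0$, and since every element of $\mathcal{A}$ is a linear combination of positive elements, we conclude $\phi \equiv 0$. Outside this trivial case, we may rescale by $\|\phi(1_\mathcal{A})\|$ and assume $\|\phi(1_\mathcal{A})\| = 1$, reducing the goal to showing that $\phi$ is contractive.

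The first substantive step is to dispose of the self-adjoint case. Since $\phi$ is positive it is automatically hermitian, i.e.\ $\phi(a^*) = \phi(a)^*$. Then for self-adjoint $a \in \mathcal{A}$ with $\|a\| \le 1$, the $C^*$-order sandwich $-1_\mathcal{A} \le a \le 1_\mathcal{A}$ is preserved by $\phi$, yielding $-\phi(1_\mathcal{A}) \le \phi(a) \le \phi(1_\mathcal{A})$ in $\mathcal{B}$. Because $\phi(a)$ is self-adjoint, this two-sided bound gives $\|\phi(a)\| \le \|\phi(1_\mathcal{A})\| = 1$ via the standard identification of norm and spectral radius for self-adjoint elements.

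The main obstacle is extending the contractivity from self-adjoint to arbitrary elements without losing the sharp constant; the naive Cartesian decomposition $a = \mathrm{Re}(a) + i\,\mathrm{Im}(a)$ only yields the weaker bound $\|\phi\| \le 2\|\phi(1_\mathcal{A})\|$. My plan is to combine two classical ingredients. First, the Russo--Dye density theorem asserts that the closed unit ball of a unital $C^*$-algebra coincides with the closed convex hull of its unitary group, so it suffices to bound $\|\phi(u)\|$ uniformly over unitaries $u \in \mathcal{A}$. Second, the Kadison--Schwarz inequality for positive maps, applied to the normal element $u$, yields $\phi(u)^*\phi(u) \le \phi(u^*u) = \phi(1_\mathcal{A})$, hence $\|\phi(u)\|^2 \le \|\phi(1_\mathcal{A})\| = 1$. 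Given any contraction $a \in \mathcal{A}$, one can write $a$ as a norm limit of convex combinations $\sum_i \lambda_i u_i$ of unitaries, and the resulting bound $\|\phi(\sum_i \lambda_i u_i)\| \le \sum_i \lambda_i \|\phi(u_i)\| \le 1$ passes to the limit using the continuity of $\phi$ (already guaranteed by the factor-of-two bound), completing the argument.
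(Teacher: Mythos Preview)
Your argument is correct and is essentially the standard textbook proof, but there is nothing to compare against: the paper does not prove this statement at all, quoting it instead as a classical result with references to \cite{Russo-Dye} and \cite[Cor.~2.9]{Paulsen-CB} and then invoking it as a black box in the second proof of Theorem~\ref{thm-all-symbol}. One minor point of precision: the Kadison--Schwarz inequality $\phi(u)^*\phi(u)\le\phi(u^*u)$ you cite is usually stated for \emph{unital} positive maps, whereas you only have $\|\phi(1_{\mathcal A})\|=1$; the non-unital version reads $\phi(a)^*\phi(a)\le\|\phi(1_{\mathcal A})\|\,\phi(a^*a)$ for normal $a$ (via Stinespring applied to the commutative subalgebra $C^*(1,u)$), but under your normalization this still yields $\|\phi(u)\|^2\le\|\phi(1_{\mathcal A})\|^2=1$, so the conclusion is unaffected.
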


The details of the second proof of Theorem \ref{thm-all-symbol} is given as follows.
\begin{itemize}
\item {\bf Steps 1 and 2:} The same as Steps 1 and 2 in the first proof of Theorem \ref{thm-all-symbol}.
\item {\bf Step 3':} By Steps 1 and 2 and also Theorem \ref{thm-QW-I},  the map $\phi: L^\infty(\T)\rightarrow B(\ell^2(T_q))$ given by $\phi(f) : = \Gamma_q[f]$ is a linear positive map between two unital $C^*$-algebras. Since $\phi$ maps the constant function $1$ to the identity operator on $\ell^2(T_q)$, by Russo-Dye theorem, we have $\| \phi\| =1$, which implies the desired reverse inequality \eqref{reverse-ineq} for general symbols $f\in L^\infty(\T)$.
\end{itemize}

\subsection{The invariant subspace of radial vectors}

 Let $q \ge 2$ be an integer and let $f$ be a formal Fourier series on $\T$ given in \eqref{fourier-series}. Recall the definition \eqref{def-T-f} of the branching-Toeplitz kernel $\Gamma_q[f]$ on $T_q$,  the definition \eqref{B-n-S-n} of the finite subset $\B_n(T_q)$ and the definition \eqref{trunc-toep} of the truncated kernel $\Gamma_q^{(n)}[f]$ on $\B_n(T_q)$. Recall also the definition \eqref{finite-rad-sub} of the space $\C_{\rad}^{\B_n(T_q)}$ of radial vectors.  A natural orthonormal basis of $\C_{\mathrm{rad}}^{\mathbb{B}_n(T_q)}$ is given by
\begin{align}\label{def-onb}
\bigg\{h_k=\frac{\mathds{1}_{\mathbb{S}_k}}{\sqrt{q^k}}:k=0,1,\cdots, n\bigg\}.
\end{align}
Let $P_{\rad}^{(n)}: \C^{\B_n(T_q)} \rightarrow \C^{\B_n(T_q)}_{\rad}$ denote the orthogonal projection from $\C^{\B_n(T_q)}$ onto the space of radial vectors.

\begin{lemma}\label{lem-inv-sub}
The subspace $\C^{\B_n(T_q)}_{\rad}\subset \C^{\B_n(T_q)}$ is a common invariant subspace for  $\Gamma_q^{(n)}[f]$ when $f$ ranges over all formal Fourier series. Moreover, with respect to the orthonormal basis \eqref{def-onb} of $\C_\rad^{\B_n(T_q)}$, the operator $ P_{\rad}^{(n)} \Gamma_q^{(n)}[f] P_{\rad}^{(n)}$ has the standard Toeplitz matrix representation given in \eqref{Toep-rep}.
\end{lemma}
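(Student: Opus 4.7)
The plan is to verify both assertions by direct computation, exploiting the fact that for any vertex $\sigma_1 \in \B_n(T_q)$ with $|\sigma_1| = m$, the set $\{\sigma_2 \in \B_n(T_q) : (\sigma_1,\sigma_2) \in \mathcal{C}\}$ splits cleanly into two regimes: ancestors, of which there is exactly one at each generation $k$ with $0 \le k \le m$, and descendants, of which there are exactly $q^{k-m}$ at each generation $k$ with $m < k \le n$. The conceptual point is that the weight $\sqrt{q^{-d(\sigma_1,\sigma_2)}}$ appearing in \eqref{def-T-f} is precisely calibrated so that, in the descendant regime, the multiplicity $q^{k-m}$ combines with $q^{-(k-m)/2}$ into the symmetric factor $q^{(k-m)/2}$, which mirrors the $q^{-(m-k)/2}$ produced by the unique ancestor at generation $k \le m$. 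Crucially, neither contribution depends on the specific $\sigma_1 \in \mathbb{S}_m$ beyond the value of $m$.

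For the invariance claim, I would fix a radial vector $v$, write $v_{\sigma_2} = v_{|\sigma_2|}$, and expand $(\Gamma_q^{(n)}[f] v)_{\sigma_1}$ using this ancestor/descendant split. The resulting expression is a sum of two pieces, one over $0 \le k \le m$ and one over $m < k \le n$, both depending on $m = |\sigma_1|$ alone. Hence $\Gamma_q^{(n)}[f] v$ is again radial, which establishes that $\C_{\rad}^{\B_n(T_q)}$ is a common invariant subspace for all $\Gamma_q^{(n)}[f]$.

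For the matrix representation, I would apply the same split to compute $\Gamma_q^{(n)}[f] h_l$. For $\sigma_1 \in \mathbb{S}_m$, the case $m \ge l$ picks up the unique ancestor of $\sigma_1$ at generation $l$ and contributes $q^{-(m-l)/2}\widehat{f}(m-l)$, while the case $m < l$ picks up all $q^{l-m}$ descendants and contributes $q^{l-m}\cdot q^{-(l-m)/2}\widehat{f}(m-l) = q^{(l-m)/2}\widehat{f}(m-l)$. After dividing by the normalization $\sqrt{q^l}$ built into $h_l$, both cases collapse to the single expression $q^{-m/2}\widehat{f}(m-l)$, whence
\[
\Gamma_q^{(n)}[f]\, h_l \;=\; \sum_{m=0}^{n} \widehat{f}(m-l)\, h_m.
\]
Reading off the coefficients shows that $P_{\rad}^{(n)}\Gamma_q^{(n)}[f] P_{\rad}^{(n)}$, expressed in the basis $\{h_k\}_{k=0}^{n}$, has matrix $[\widehat{f}(m-l)]_{0 \le m, l \le n}$, which is exactly $T_n(f)$ as in \eqref{Toep-rep}.

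There is no genuine obstacle beyond careful bookkeeping; the lemma is essentially the statement that the weights $\sqrt{q^{-d}}$ in the definition \eqref{def-T-f} were chosen precisely so that the $q$-ary branching becomes invisible on the radial subspace and one recovers the classical Toeplitz structure.
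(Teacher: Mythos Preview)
Your proposal is correct and follows essentially the same route as the paper: both compute $\Gamma_q^{(n)}[f]\,h_l$ by splitting into the ancestor case ($|\sigma_1|\ge l$, unique comparable vertex) and the descendant case ($|\sigma_1|<l$, $q^{l-|\sigma_1|}$ comparable vertices), and in each case the answer collapses to $q^{-|\sigma_1|/2}\widehat f(|\sigma_1|-l)$, yielding $\Gamma_q^{(n)}[f]\,h_l=\sum_m \widehat f(m-l)\,h_m$. The only cosmetic difference is that the paper derives both invariance and the Toeplitz representation from this single computation on the spanning set $\{h_l\}$, whereas you first argue invariance for an arbitrary radial vector; that extra paragraph is harmless but redundant.
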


\begin{proof}
For any $0 \le l \le n$, we have
\begin{align*}
(\Gamma_q^{(n)}[f] h_l )(\sigma)&  = \sum_{\tau \in \B_n(T_q)} (\Gamma_q^{(n)}[f]) (\sigma, \tau) \frac{\mathds{1}_{\mathbb{S}_l}(\tau)}{\sqrt{q^l}} = \frac{1}{\sqrt{q^l}} \sum_{\tau \in \mathbb{S}_l}  \sqrt{q^{- \big| |\sigma| - |\tau|\big|}} \widehat{f}( | \sigma| - |\tau|) \mathds{1}_{\mathcal{C}}(\sigma, \tau).
\end{align*}
Fix $\sigma \in \B_n(T_q)$ and we divide the computation of $(\Gamma_q^{(n)}[f] h_l )(\sigma)$ into two cases.
\begin{itemize}
\item  If $| \sigma| \ge l$, then there exists a unique $\tau \in \mathbb{S}_l$ which is comparable with $\sigma$. That is,
\[
\# \Big\{\tau \in \mathbb{S}_l \Big|   (\sigma, \tau) \in \mathcal{C}\Big\} =  1.
 \]
Hence
\begin{align*}
(\Gamma_q^{(n)}[f] h_l )(\sigma) = \frac{1}{\sqrt{q^l}}  \sqrt{q^{-  |\sigma|+ l}} \widehat{f}( | \sigma| - l) =   \frac{1}{\sqrt{q^{|\sigma|}}} \widehat{f}( | \sigma| - l ).
\end{align*}
\item If $| \sigma| < l$, then
\[
\# \Big\{\tau \in \mathbb{S}_l \Big|   (\sigma, \tau) \in \mathcal{C}\Big\} =  q^{l-|\sigma|}.
 \]
Hence
\begin{align*}
(\Gamma_q^{(n)}[f] h_l )(\sigma) = \frac{1}{\sqrt{q^l}}  \sqrt{q^{  |\sigma|- l}} \widehat{f}( | \sigma| - l ) q^{l-|\sigma|} =   \frac{1}{\sqrt{q^{|\sigma|}}} \widehat{f}( | \sigma| - l) .
\end{align*}
\end{itemize}
Therefore, we obtain
\begin{eqnarray*}
\Gamma_q^{(n)}[f]h_l&=&\sum_{k=0}^n   \frac{1}{\sqrt{q^k}} \widehat{f}( k - l ) 1_{\mathbb{S}_k}= \sum_{k =0}^n \widehat{f}(k-l) h_k.
\end{eqnarray*}
This shows not only that $\C_\rad^{\B_n(T_q)}$ is an invariant subspace for the operator $\Gamma_q^{(n)}[f]$ but also that the operator $P_\rad^{(n)}\Gamma_q^{(n)}[f] P_\rad^{(n)}$ has the Toeplitz matrix representation \eqref{Toep-rep}  with respect to the orthonormal basis \eqref{def-onb}.
\end{proof}

\begin{corollary}\label{cor-dec-finite}
Fix an integer $n\ge 1$.  Then with respect to the orthogonal decomposition
\[
\C^{\B_n(T_q)}=\C^{\mathbb{B}_n(T_q)}_{\mathrm{rad}}\oplus \Big( \C^{\B_n(T_q)} \ominus \C^{\mathbb{B}_n(T_q)}_{\mathrm{rad}}\Big),
\]
any branching-Toeplitz matrix $\Gamma_q^{(n)}[f]$, considered as a linear operator on $\C^{\B_n(T_q)}$,  has the following block representation:
\begin{align}\label{BT-block-form-finite}
\Gamma_q^{(n)}[f] =
\left[
\begin{array}{cc}
 \Gamma_q^{(n)}[f]\Big|_{\C^{\mathbb{B}_n(T_q)}_{\mathrm{rad}}} &  0
\\
0 &  \Gamma_q^{(n)}[f]\Big|_{ \C^{\B_n(T_q)} \ominus \C^{\mathbb{B}_n(T_q)}_{\mathrm{rad}}}
\end{array}
\right].
\end{align}
Moreover, we have
\begin{align}\label{norm-sup-2}
\begin{split}
\| \Gamma_q^{(n)}[f]\| & = \max  \left( \left\| \Gamma_q^{(n)}[f]\Big|_{\C^{\mathbb{B}_n(T_q)}_{\mathrm{rad}}}\right\|,\quad \left\| \Gamma_q^{(n)}[f]\Big|_{ \C^{\B_n(T_q)} \ominus \C^{\mathbb{B}_n(T_q)}_{\mathrm{rad}}}\right\| \right)
\\
&  = \max  \left( \left\| T_n(f) \right\|, \quad \left\| \Gamma_q^{(n)}[f]\Big|_{ \C^{\B_n(T_q)} \ominus \C^{\mathbb{B}_n(T_q)}_{\mathrm{rad}}}\right\| \right).
\end{split}
\end{align}
\end{corollary}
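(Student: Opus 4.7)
The plan is to derive this corollary directly from Lemma \ref{lem-inv-sub} by applying it to both $\Gamma_q^{(n)}[f]$ and its adjoint. Lemma \ref{lem-inv-sub} already supplies the invariance of $\mathbb{C}^{\mathbb{B}_n(T_q)}_{\mathrm{rad}}$ under $\Gamma_q^{(n)}[f]$. To upgrade this to the block-\emph{diagonal} form \eqref{BT-block-form-finite}, rather than a merely block-upper-triangular one, I need to establish that $\mathbb{C}^{\mathbb{B}_n(T_q)}\ominus \mathbb{C}^{\mathbb{B}_n(T_q)}_{\mathrm{rad}}$ is also invariant under $\Gamma_q^{(n)}[f]$, equivalently that $\mathbb{C}^{\mathbb{B}_n(T_q)}_{\mathrm{rad}}$ is invariant under the adjoint $(\Gamma_q^{(n)}[f])^*$.

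The key step, which I view as the main obstacle, is the observation that the class of branching-Toeplitz kernels is stable under the adjoint operation. Indeed, since $d(\sigma_1,\sigma_2)=d(\sigma_2,\sigma_1)$ and $\mathcal{C}$ is symmetric, the defining formula \eqref{def-T-f} yields
\[
(\Gamma_q^{(n)}[f])^*(\sigma_1, \sigma_2) = \overline{\Gamma_q^{(n)}[f](\sigma_2, \sigma_1)} = \sqrt{q^{-d(\sigma_1, \sigma_2)}}\,\overline{\widehat{f}(|\sigma_2|-|\sigma_1|)}\,\mathds{1}_{\mathcal{C}}(\sigma_1, \sigma_2),
\]
so $(\Gamma_q^{(n)}[f])^* = \Gamma_q^{(n)}[\widetilde{f}]$, where $\widetilde{f}$ is the formal Fourier series with coefficients $\widehat{\widetilde{f}}(k)=\overline{\widehat{f}(-k)}$. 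Applying Lemma \ref{lem-inv-sub} to the symbol $\widetilde{f}$ then gives the invariance of $\mathbb{C}^{\mathbb{B}_n(T_q)}_{\mathrm{rad}}$ under $(\Gamma_q^{(n)}[f])^*$, which completes the verification of the block-diagonal decomposition \eqref{BT-block-form-finite}.

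With the block-diagonal form in hand, the first line of \eqref{norm-sup-2} is just the standard identity that the operator norm of a block-diagonal matrix equals the maximum of the norms of its diagonal blocks. The second line reduces to the identification $\|\Gamma_q^{(n)}[f]|_{\mathbb{C}^{\mathbb{B}_n(T_q)}_{\mathrm{rad}}}\|=\|T_n(f)\|$, which is immediate from the second assertion of Lemma \ref{lem-inv-sub}: with respect to the orthonormal basis \eqref{def-onb}, the matrix of the radial restriction is precisely the standard Toeplitz matrix $T_n(f)$. Thus the only genuinely non-routine ingredient is the closure of branching-Toeplitz kernels under adjoints, which in turn rests on the geometric symmetries of $d(\cdot,\cdot)$ and $\mathcal{C}$ on the homogeneous tree; everything else is direct.
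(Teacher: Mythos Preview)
Your proposal is correct and follows essentially the same approach as the paper: observe that $(\Gamma_q^{(n)}[f])^* = \Gamma_q^{(n)}[\bar f]$ (your $\widetilde{f}$ is exactly the paper's $\bar f$), apply Lemma~\ref{lem-inv-sub} to both $f$ and $\bar f$ to get invariance of the radial subspace under the operator and its adjoint, and then read off the block-diagonal form and the norm identity. The only difference is that you spell out the verification of the adjoint formula from the symmetry of $d(\cdot,\cdot)$ and $\mathcal{C}$, whereas the paper simply states it.
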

\begin{proof}
Note that $(\Gamma_q^{(n)}[f])^{*} = \Gamma_q^{(n)}[\bar{f}]$. By Lemma \ref{lem-inv-sub}, the space $\C^{\mathbb{B}_n(T_q)}_{\mathrm{rad}}$ is invariant under both $\Gamma_q^{(n)}[f]$ and $(\Gamma_q^{(n)}[f])^*$. It follows that $\C^{\B_n(T_q)} \ominus \C^{\mathbb{B}_n(T_q)}_{\mathrm{rad}}$ is also invariant  under $\Gamma_q^{(n)}[f]$ and we have the block representation \eqref{BT-block-form-finite}. The equality  \eqref{norm-sup-2}  follows from the equality
\[
\left\| \Gamma_q^{(n)}[f]\Big|_{\C^{\mathbb{B}_n(T_q)}_{\mathrm{rad}}}\right\| = \| T_n(f)\|,
\]
which is also a consequence of  Lemma \ref{lem-inv-sub}.
\end{proof}

\begin{lemma}\label{lem-one-side}
If the branching-Toeplitz operator $\Gamma_q[f]:\ell^2(T_q)\to \ell^2(T_q)$ is bounded, then the formal Fourier series \eqref{fourier-series} defines a function  in $L^\infty(\mathbb{T})$ and we have
\begin{align}\label{sup-norm}
 \|f \|_\infty \le \|\Gamma_q[f]\|.
\end{align}
\end{lemma}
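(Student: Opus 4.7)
The plan is to extend Lemma \ref{lem-inv-sub} to the infinite-dimensional setting by locating a distinguished invariant subspace of $\ell^2(T_q)$ on which $\Gamma_q[f]$ acts as the classical Toeplitz operator $T(f)$, and then to quote the classical boundedness criterion for Toeplitz operators on $\ell^2(\mathbb{N})$.

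Concretely, I would first introduce the space of radial $\ell^2$-vectors
\[
\ell^2(T_q)_{\mathrm{rad}}:=\Big\{v\in \ell^2(T_q)\,\Big|\, v_\sigma=v_\tau \text{ whenever } |\sigma|=|\tau|\Big\}
\]
equipped with the orthonormal basis $\{h_k=q^{-k/2}\mathds{1}_{\mathbb{S}_k}\}_{k=0}^\infty$, where $\mathbb{S}_k=\{\sigma\in T_q:|\sigma|=k\}$ has cardinality $q^k$. Since $\Gamma_q[f]$ is assumed bounded, it maps each $h_k$ (a finitely supported vector) to an element of $\ell^2(T_q)$. Repeating verbatim the pointwise computation carried out in the proof of Lemma \ref{lem-inv-sub}---which splits into the two cases $|\sigma|\ge l$ and $|\sigma|<l$ and uses $\#\{\tau\in\mathbb{S}_l:(\sigma,\tau)\in\mathcal{C}\}=\max(1,q^{l-|\sigma|})$---I obtain the identity
\[
\Gamma_q[f]h_l \;=\; \sum_{k=0}^{\infty}\widehat{f}(k-l)\,h_k,
\]
interpreted in the weak sense (the inner products $\langle\Gamma_q[f]h_l,h_k\rangle=\widehat{f}(k-l)$ arise as finite sums). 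In particular $\Gamma_q[f]h_l\in \ell^2(T_q)_{\mathrm{rad}}$, so $\ell^2(T_q)_{\mathrm{rad}}$ is invariant under $\Gamma_q[f]$.

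Next, let $V:\ell^2(T_q)_{\mathrm{rad}}\to \ell^2(\mathbb{N})$ be the unitary determined by $Vh_k=\delta_k$. The identity displayed above is precisely the statement that
\[
V\,\Gamma_q[f]\big|_{\ell^2(T_q)_{\mathrm{rad}}}\,V^{-1} \;=\; \Gamma_1[f] \;=\; T(f),
\]
i.e.\ the restriction of $\Gamma_q[f]$ to the radial subspace is unitarily equivalent to the classical Toeplitz operator on $\ell^2(\mathbb{N})$ with symbol $f$. Since $\Gamma_q[f]$ is bounded, so is $T(f)$, with $\|T(f)\|\le \|\Gamma_q[f]\|$. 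I would then invoke the classical theorem (see e.g.\ the discussion around \eqref{cla-toep}--\eqref{2-way-Toep}) that boundedness of $T(f)$ on $\ell^2(\mathbb{N})$ forces $f\in L^\infty(\mathbb{T})$ with $\|T(f)\|=\|f\|_\infty$, yielding
\[
\|f\|_\infty \;=\;\|T(f)\|\;\le\;\|\Gamma_q[f]\|,
\]
which is \eqref{sup-norm}.

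The only point that requires a small amount of care---and the main (mild) obstacle---is the justification that the restriction to the radial subspace is well-defined and has the claimed matrix entries in the infinite-dimensional setting; this is handled by the observation that the kernel entries $\Gamma_q[f](\sigma,\tau)$ are automatically bounded (since $\Gamma_q[f]\delta_\tau\in \ell^2(T_q)$ when $\Gamma_q[f]$ is bounded), so the finite-support computation of Lemma \ref{lem-inv-sub} transfers directly. No approximation or truncation argument is otherwise needed.
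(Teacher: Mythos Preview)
Your proof is correct and follows essentially the same strategy as the paper: reduce to the classical Toeplitz operator via the radial invariant subspace and invoke the standard boundedness criterion. The paper's formal proof implements this through finite truncations (using $\|f\|_\infty=\sup_{n}\|T_n(f)\|$ together with Lemma~\ref{lem-inv-sub}), whereas you pass directly to the infinite-dimensional radial subspace---your argument is in fact the content of Proposition~\ref{prop-bdd}, which the paper records immediately afterward.
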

\begin{proof}
Let $T_{n}(f)$ denote the standard $(n+1)\times (n+1)$-Toeplitz matrix in \eqref{Toep-rep}. By the classical result on Toeplitz matrices (see, e.g., Peller \cite[Thm 1.1, p. 88]{Peller-book}),  we have
\[
\|f \|_\infty  = \sup_{n\in \N} \| T_{n}(f)\|.
\]
An immediate consequence of Lemma \ref{lem-inv-sub} is
\[
\| T_{n}(f)\| = \| P_\rad^{(n)} \Gamma_q^{(n)}[f] P_{\rad}^{(n)}\| \le \| \Gamma_q^{(n)}[f]\| \le \| \Gamma_q[f]\|,
\]
this completes the proof of the lemma.
\end{proof}

Introduce the space of  radial vectors:
\begin{align}\label{rad-subsp}
\ell^2(T_q)_{\rad}: = \Big\{v = (v_\sigma)_{\sigma\in T_q} \in \ell^2(T_q)\Big| \text{$v_\sigma = v_\tau$ whenever $| \sigma| = | \tau|$}\Big\}.
\end{align}
A natural orthonormal basis of $\ell^2(T_q)_{\rad}$ is given by
\[
\bigg\{h_k=\frac{\mathds{1}_{\mathbb{S}_k}}{\sqrt{q^k}} \Big| k=0,1, 2, \cdots\bigg\}, \quad \text{where $\mathbb{S}_k:  = \{ \sigma \in T_q \big| | \sigma |  = k \}$.}
\]

Let $P_{\rad}: \ell^2(T_q) \rightarrow \ell^2(T_q)_{\rad}$ denote the associated orthogonal projection.

\begin{proposition}\label{prop-bdd}
The space $\ell^2(T_q)_{\rad}$ is a common invariant subspace for all operators in $B_\TT(\ell^2(T_q))$. Moreover,  for all $f\in L^\infty(\T)$, we have the following commutative diagram
\[
\begin{CD}
\ell^2(T_q)_{\rad} @> P_{\rad} \circ \Gamma_q[f] \circ P_{\rad}>>   \ell^2(T_q)_{\rad}
\\
@V{\Phi}V{\simeq }V     @V{\Phi}V{\simeq}V
\\
 \ell^2(\N) @>\qquad \Gamma_1[f] =  T(f)\qquad >> \ell^2(\N)
\end{CD},
\]
where $\Phi$ is the unitary operator defined by
$\Phi h_k = \delta_k$ for all $k \in \N$
and $T(f)$ is the operator  on $\ell^2(\N)$ corresponding to the standard Toeplitz matrix given in \eqref{cla-toep}.
\end{proposition}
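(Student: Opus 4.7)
The plan is to reduce the infinite-dimensional statement to the finite-dimensional computation already carried out in Lemma \ref{lem-inv-sub}. Fix $f \in L^\infty(\T)$ so that $\Gamma_q[f] \in B_\TT(\ell^2(T_q))$ is a bounded operator. For each $l \in \N$, the vector $h_l = \mathds{1}_{\mathbb{S}_l}/\sqrt{q^l} \in \ell^2(T_q)$ is a well-defined unit vector in the radial subspace, and the image $\Gamma_q[f] h_l$ is then a well-defined element of $\ell^2(T_q)$.

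Next I would evaluate $(\Gamma_q[f] h_l)(\sigma)$ pointwise for $\sigma \in T_q$ by the same two-case analysis as in the proof of Lemma \ref{lem-inv-sub}: in the case $|\sigma| \ge l$ there is exactly one $\tau \in \mathbb{S}_l$ comparable to $\sigma$, while in the case $|\sigma| < l$ there are exactly $q^{l-|\sigma|}$ such $\tau$, and both cases collapse to the single expression
\[
(\Gamma_q[f] h_l)(\sigma) = \frac{1}{\sqrt{q^{|\sigma|}}}\,\widehat{f}(|\sigma| - l).
\]
In particular the value depends only on $|\sigma|$, so $\Gamma_q[f] h_l$ is radial. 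Since $h_l$ already lies in $\ell^2(T_q)_\rad$ and the image under a bounded operator is a bona fide element of $\ell^2(T_q)$, we may regroup terms by generation to obtain
\[
\Gamma_q[f] h_l = \sum_{k=0}^{\infty} \widehat{f}(k-l)\, h_k,
\]
where the series converges in the $\ell^2$-norm. This shows that $\ell^2(T_q)_\rad$ is invariant under $\Gamma_q[f]$, and since the same argument applied to $(\Gamma_q[f])^{*} = \Gamma_q[\bar f]$ also produces a radial image, the argument in fact shows that $\ell^2(T_q)_\rad$ is a common invariant subspace for every element of $B_\TT(\ell^2(T_q))$.

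Finally, transporting the identity $\Gamma_q[f] h_l = \sum_k \widehat{f}(k-l) h_k$ through the unitary $\Phi$ defined by $\Phi h_k = \delta_k$ yields $\Phi(\Gamma_q[f] h_l) = \sum_k \widehat{f}(k-l)\delta_k = T(f)\delta_l = T(f)\Phi(h_l)$, since $T(f)$ is precisely the operator on $\ell^2(\N)$ with matrix entries $[\widehat{f}(k-l)]_{k,l\in\N}$ in the canonical basis. By linearity and density of the span of $\{h_l\}$ in $\ell^2(T_q)_\rad$, this is exactly the intertwining relation $\Phi \circ (P_\rad \circ \Gamma_q[f] \circ P_\rad)\restriction_{\ell^2(T_q)_\rad} = T(f) \circ \Phi$, which is the commutative diagram in the statement. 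The main (mild) point of care is the convergence of the series $\sum_k \widehat{f}(k-l) h_k$ in $\ell^2$, which is guaranteed by the a priori boundedness hypothesis $\Gamma_q[f] \in B(\ell^2(T_q))$ built into the definition of $B_\TT(\ell^2(T_q))$; no other obstacle arises.
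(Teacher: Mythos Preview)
Your proposal is correct and is precisely the approach the paper intends: the paper's own proof consists of the single sentence ``The proof of Proposition \ref{prop-bdd} is similar to that of Lemma \ref{lem-inv-sub},'' and you have simply written out that similarity in full, carrying the two-case count from $\B_n(T_q)$ over to the whole tree and using boundedness of $\Gamma_q[f]$ to justify the $\ell^2$-convergence of $\sum_k \widehat f(k-l)\,h_k$. No gap and no essential difference.
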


\begin{proof}
The proof of Proposition \ref{prop-bdd} is similar to that of Lemma \ref{lem-inv-sub}.
\end{proof}

\subsection{Hermitian branching-Toeplitz operators}

In this section, we will give the proof of the equality \eqref{norm-2-side} for Hermitian kernels $\Gamma_q[f]$. As a consequence, we obtain
\[
\textit{$\Gamma_q[f]$ is bounded if and only if $f\in L^\infty(\T)$.}
\]

\bigskip

Note first that  $\Gamma_q[f]$ is Hermitian if and only if the formal Fourier series $f$ defined in \eqref{fourier-series} is Hermitian in the sense that
\[
\widehat{f}(-n) =  \overline{\widehat{f}(n)}, \quad \forall n \in \Z.
\]
 Let $L^\infty(\T; \R) \subset L^\infty(\T)$ denote the subset of bounded real-valued functions.

\begin{proposition}\label{prop-norm-eq}
Assume that the formal Fourier series $f$ is Hermitian.  Then $\Gamma_q[f]$ induces a bounded operator on $\ell^2(T_q)$ if and only if the formal Fourier series $f$ represents a function in $L^\infty(\T; \R)$. Moreover, in this case,  we have
\begin{align}\label{norm-eq}
 \| \Gamma_q[f]\|  = \| f \|_\infty.
\end{align}
\end{proposition}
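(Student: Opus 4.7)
The plan is to handle the two implications separately, treating the easier direction via the already-proved Lemma \ref{lem-one-side} and concentrating effort on the upper bound $\|\Gamma_q[f]\| \le \|f\|_\infty$ for a real-valued $f \in L^\infty(\T;\R)$. The whole argument is a clean two-sided operator-norm estimate obtained by sandwiching $\Gamma_q[f]$ between two positive definite branching-Toeplitz kernels supplied by Theorem \ref{thm-QW-I}.

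First I would record the elementary but crucial observation that $\Gamma_q$ sends constant symbols to multiples of the identity: for a constant $c$, the only nonzero Fourier coefficient is $\widehat{c}(0)=c$, so $\Gamma_q[c](\sigma_1,\sigma_2)$ vanishes unless $|\sigma_1|=|\sigma_2|$ and $(\sigma_1,\sigma_2)\in \mathcal{C}$; but comparable vertices in the same generation must coincide, so $\Gamma_q[c]=c\cdot I$ on $\ell^2(T_q)$.

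Next, assume $f$ is Hermitian and represents a function in $L^\infty(\T;\R)$. Then the two real-valued functions $\|f\|_\infty\pm f$ are pointwise nonnegative a.e., hence represent positive Radon measures on $\T$. By Theorem \ref{thm-QW-I}, both kernels $\Gamma_q[\,\|f\|_\infty\pm f\,]$ are positive definite on $T_q\times T_q$. By the linearity of the map $f\mapsto \Gamma_q[f]$ and the preceding observation, these kernels equal
\[
\Gamma_q[\,\|f\|_\infty\pm f\,] \;=\; \|f\|_\infty\cdot I \;\pm\; \Gamma_q[f].
\]
Restricting to any finite subset $F\subset T_q$, positive definiteness of the kernel translates to positive semi-definiteness of the truncated Hermitian matrix, giving
\[
-\|f\|_\infty I_F \;\le\; \Gamma_q[f]\big|_{F\times F} \;\le\; \|f\|_\infty I_F,
\]
and therefore $\|\Gamma_q[f]|_{F\times F}\|\le \|f\|_\infty$. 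Since this bound is uniform in $F$, the kernel $\Gamma_q[f]$ extends to a bounded Hermitian operator on $\ell^2(T_q)$ with $\|\Gamma_q[f]\|\le \|f\|_\infty$.

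Finally, to close the loop, assume only that $\Gamma_q[f]$ is bounded (with $f$ Hermitian). Lemma \ref{lem-one-side}, which makes no use of hermiticity, already tells us that the formal Fourier series $f$ represents a function in $L^\infty(\T)$ and that $\|f\|_\infty\le \|\Gamma_q[f]\|$; Hermiticity of the Fourier coefficients then forces $f\in L^\infty(\T;\R)$, bringing us back to the previous case. Combining the two inequalities yields the equality \eqref{norm-eq}. The only step that is not immediate bookkeeping is the sandwich argument, and there the technical point to watch is the passage from positive definiteness of an infinite kernel to uniform boundedness of finite truncations; this is handled precisely by restricting to finite $F$ before invoking the matrix inequality, so no analytic subtleties about unbounded kernels arise.
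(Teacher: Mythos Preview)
Your proof is correct and follows essentially the same approach as the paper: both argue that for real $f\in L^\infty(\T;\R)$ the functions $\|f\|_\infty\pm f$ are nonnegative, invoke Theorem~\ref{thm-QW-I} to get positive definiteness of $\Gamma_q[\|f\|_\infty\pm f]=\|f\|_\infty\cdot Id\pm\Gamma_q[f]$, and then pass to finite truncations to deduce $\|\Gamma_q^{(n)}[f]\|\le\|f\|_\infty$ uniformly. The only cosmetic difference is that the paper truncates to the balls $\B_n(T_q)$ while you truncate to arbitrary finite $F$, which is immaterial.
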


\begin{proof}
By Lemma \ref{lem-one-side}, it remains  to show that if $f \in L^\infty(\T; \R)$, then $\Gamma_q[f]$ is bounded on $\ell^2(T_q)$ with norm $\| \Gamma_q[f]\|\le \| f \|_\infty$.

Suppose that  $f \in L^\infty(\T; \R)$ and write
$
\lambda =\|f\|_\infty.
$
Since $f$ is real-valued, we have
$$\left\{
    \begin{array}{ll}
     g: =  f+ \lambda \geq 0
\vspace{2mm}
\\
     h: =  \lambda - f \geq 0
    \end{array}
  \right..
$$
Therefore, by Theorem \ref{thm-QW-I}, the corresponding branching-Toeplitz kernels $\Gamma_q[g]$ and $\Gamma_q[h]$ are both positive definite. Clearly, we have
\[
\Gamma_q[g] = \Gamma_q[f] + \lambda \cdot Id, \quad \Gamma_q[h] = \lambda \cdot Id - \Gamma_q[f],
\]
where $Id$ denotes the kernel corresponding to the identity operator on $\ell^2(T_q)$.  Thus  for any $n\ge 1$, by truncating onto the finite subset $\B_n(T_q) \subset T_q$ as in \eqref{trunc-toep} and by denoting $Id_{\B_n(T_q)}$ the identity matrix on $\C^{\B_n(T_q)}$, we prove that the following two finite  matrices
\[
\Gamma_q^{(n)}[g] = \Gamma_q^{(n)}[f] + \lambda \cdot Id_{\B_n(T_q)}, \quad \Gamma_q^{(n)}[h] = \lambda \cdot Id_{\B_n(T_q)} - \Gamma_q^{(n)}[f],
\]
are both positive definite. This implies  $\| \Gamma_q^{(n)}[f]\|\le \lambda$.   Therefore,
\[
\| \Gamma_q[f]\| = \sup_{n \in \N }\| \Gamma_q^{(n)}[f]\|\le \lambda = \| f \|_\infty
\]
and we complete the proof.
\end{proof}

\begin{corollary}\label{cor-set-eq}
For any integer $q \ge 2$, we have the following set-theoretical equality
\[
B_\TT(\ell^2(T_q)) = \Big\{ \Gamma_q[f]\Big| f \in L^\infty(\T) \Big\}.
\]
\end{corollary}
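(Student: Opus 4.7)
The plan is to prove the two set-theoretic inclusions separately. The forward inclusion $B_\TT(\ell^2(T_q)) \subseteq \{\Gamma_q[f] \mid f \in L^\infty(\T)\}$ is immediate: this is exactly the content of Lemma \ref{lem-one-side}, which says that bounded branching-Toeplitz operators must have symbols in $L^\infty(\T)$ (indeed it even provides the sharp lower bound $\|f\|_\infty \le \|\Gamma_q[f]\|$ via the radial invariant subspace).

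For the reverse inclusion, I would exploit the linearity of the map $f \mapsto \Gamma_q[f]$ together with the real-valued case already handled in Proposition \ref{prop-norm-eq}. Given an arbitrary $f \in L^\infty(\T)$, I would split into real and imaginary parts, writing
\[
f = f_1 + i f_2, \quad f_1 := \tfrac{1}{2}(f + \bar{f}), \quad f_2 := \tfrac{1}{2i}(f - \bar{f}),
\]
so that $f_1, f_2 \in L^\infty(\T;\R)$ with $\|f_j\|_\infty \le \|f\|_\infty$ for $j=1,2$. Since the Fourier coefficients satisfy $\widehat{f_j}(-n) = \overline{\widehat{f_j}(n)}$, both corresponding formal Fourier series are Hermitian in the sense of the preceding subsection, so Proposition \ref{prop-norm-eq} applies and yields that $\Gamma_q[f_1]$ and $\Gamma_q[f_2]$ are bounded operators on $\ell^2(T_q)$ with norms at most $\|f\|_\infty$.

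Finally, directly from the formula \eqref{def-T-f} defining $\Gamma_q[\cdot]$ as a linear functional of $\widehat{f}$, the assignment $f \mapsto \Gamma_q[f]$ is complex-linear on formal Fourier series. Hence
\[
\Gamma_q[f] = \Gamma_q[f_1] + i\,\Gamma_q[f_2]
\]
is a sum of two bounded operators and therefore lies in $B_\TT(\ell^2(T_q))$, establishing the remaining inclusion.

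There is essentially no obstacle here — all the real work has already been carried out in Lemma \ref{lem-one-side} and Proposition \ref{prop-norm-eq}, which in turn relied on the positive definite criterion of Theorem \ref{thm-QW-I}. The only mild subtlety is checking that the real and imaginary parts produce \emph{Hermitian} formal Fourier series so that Proposition \ref{prop-norm-eq} applies verbatim, but this follows automatically from $f_j$ being real-valued. Note that this proof yields at most $\|\Gamma_q[f]\| \le 2\|f\|_\infty$ by the triangle inequality; the sharp bound $\|\Gamma_q[f]\| = \|f\|_\infty$ claimed in Theorem \ref{thm-all-symbol} requires the additional arguments outlined in Steps~3--4 (or Step~3$'$ via Russo--Dye), but is not needed for the set-theoretic statement of Corollary \ref{cor-set-eq}.
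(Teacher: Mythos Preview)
Your proof is correct and follows essentially the same approach as the paper: both use Lemma~\ref{lem-one-side} for the forward inclusion and, for the reverse inclusion, decompose $f$ into its real and imaginary parts $f_1 = \tfrac{1}{2}(f+\bar f)$, $f_2 = \tfrac{1}{2i}(f-\bar f)$ and apply Proposition~\ref{prop-norm-eq} to each. Your remark that this yields only $\|\Gamma_q[f]\| \le 2\|f\|_\infty$, with the sharp bound deferred to later steps, is also exactly how the paper proceeds.
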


\begin{proof}
By Lemma \ref{lem-one-side}, it remains to show that  for any $f\in L^\infty(\T)$, we have
$\Gamma_q[f] \in B(\ell^2(T_q))$. Indeed, if $f \in L^\infty(\T)$,
 then  $f + \bar{f}$ and  $i (f - \bar{f})$ are in $L^\infty(\T; \R)$. Therefore,  by Proposition \ref{prop-norm-eq}, both kernels $\Gamma_q[f + \bar{f}]$ and  $\Gamma_q[ i(f - \bar{f})]$ induce bounded operators on  $\ell^2(T_q)$. Hence
\[
\Gamma_q[f] =  \Gamma_q\Big[ \frac{f + \bar{f}}{2}\Big] +  i  \Gamma_q\Big[ \frac{f - \bar{f}}{2i}\Big]  \in B(\ell^2(T_q)).
\]
\end{proof}

\subsection{Multiplicativity}
In this section, we will prove Theorem \ref{thm-gen-mul}.

Let $\C[e^{i\theta}, e^{-i \theta}]$ be the set of trigonometric polynomials on $\T$ and let $\C[e^{i\theta}]$ be the subset of analytic trigonometric polynomials. That is,
\begin{align*}
\C[e^{i\theta}, e^{-i \theta}] &= \Big\{P = \sum_{|n|\le N} a_n e^{in \theta} \Big| N \in \N, a_n \in \C\Big\},
\\
\C[e^{i\theta}] & = \Big\{Q= \sum_{n  = 0}^N a_n e^{in \theta} \Big| N \in \N, a_n \in \C\Big\}.
\end{align*}

\begin{lemma}\label{lem-mul}
For any $P\in \C[e^{i\theta}, e^{-i \theta}]$ and any $Q \in \C[e^{i\theta}]$, we have
\[
\Gamma_q[P] \Gamma_q[Q] = \Gamma_q[PQ].
\]
\end{lemma}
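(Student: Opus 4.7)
The plan is to verify the identity pointwise on $T_q \times T_q$ by direct kernel computation. Fix $\sigma_1,\sigma_3 \in T_q$ and expand
\[
(\Gamma_q[P]\Gamma_q[Q])(\sigma_1,\sigma_3) \;=\; \sum_{\sigma_2 \in T_q}\Gamma_q[P](\sigma_1,\sigma_2)\,\Gamma_q[Q](\sigma_2,\sigma_3).
\]
Since $P$ and $Q$ are polynomials the sum is finite. The key structural input is that $Q \in \C[e^{i\theta}]$ has only non-negative Fourier coefficients, so $\Gamma_q[Q](\sigma_2,\sigma_3)\ne 0$ forces $\sigma_3 \pless \sigma_2$ with $|\sigma_2|-|\sigma_3| = l \ge 0$. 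This converts the summation into a sum over $l\ge 0$ and over the descendants of $\sigma_3$ at generation $|\sigma_3|+l$.

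If $\sigma_1$ and $\sigma_3$ are incomparable, no valid $\sigma_2$ exists: the ancestors of any vertex form a chain, so a descendant of $\sigma_3$ comparable with $\sigma_1$ would force $\sigma_1$ and $\sigma_3$ onto a common rooted geodesic ray, a contradiction. Both sides then vanish. Otherwise, set $k=|\sigma_1|-|\sigma_3|$. In the case $\sigma_3 \pless \sigma_1$ (so $k\ge 0$), I would split the $l$-sum at $l=k$: for $l \le k$ there is a unique valid $\sigma_2$, namely the ancestor of $\sigma_1$ at generation $|\sigma_3|+l$; for $l > k$, the valid $\sigma_2$'s are precisely the $q^{l-k}$ descendants of $\sigma_1$ at generation $|\sigma_3|+l$. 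In the remaining case $\sigma_1 \pless \sigma_3$ with $\sigma_1\ne\sigma_3$ (so $k<0$), any descendant of $\sigma_3$ automatically satisfies $\sigma_1 \pless \sigma_2$, and there are $q^l$ such $\sigma_2$'s for each $l \ge 0$.

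In each sub-case, multiplying the distance factors $q^{-d(\sigma_1,\sigma_2)/2}q^{-d(\sigma_2,\sigma_3)/2}$ by the count of valid intermediate vertices collapses to a single factor $q^{-|k|/2}$ multiplied by $\widehat{P}(k-l)\widehat{Q}(l)$; the underlying algebraic identity is $q^{l-k}\cdot q^{-(l-k)/2-l/2}=q^{-k/2}$ (with a symmetric analogue when $k<0$). Summing over $l\ge 0$ and invoking $\widehat{PQ}(k)=\sum_{l\ge 0}\widehat{P}(k-l)\widehat{Q}(l)$ (valid precisely because $Q$ is analytic) yields $q^{-|k|/2}\widehat{PQ}(k)\mathds{1}_{\mathcal{C}}(\sigma_1,\sigma_3) = \Gamma_q[PQ](\sigma_1,\sigma_3)$, as required.

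The main obstacle is the bookkeeping: correctly enumerating comparable intermediate vertices in each geometric sub-case and tracking the competing powers of $q$ coming from distance factors versus branching multiplicities. A slicker alternative, more in the spirit of the full Fock space viewpoint mentioned in \S\ref{sec-fock}, would identify $T_q$ with $\F_q^{+}$ and write $\Gamma_q[e^{in\theta}] = q^{-n/2}\sum_{|u|=n}L_u$ for $n\ge 0$ (with adjoint for $n<0$) in terms of the left creation operators $L_1,\dots,L_q$ on $\ell^2(\F_q^{+})$; the relations $L_i^* L_j = \delta_{ij}I$ then let one carry out the product algebraically after reducing $P$ and $Q$ to linear combinations of monomials.
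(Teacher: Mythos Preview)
Your argument is correct and is essentially the paper's own proof: a direct pointwise kernel computation in which one enumerates the admissible intermediate vertices and checks that the branching multiplicities cancel against the distance factors $q^{-d/2}$. The only cosmetic difference is the organization of the case analysis---the paper first reduces by linearity to monomials $P=e^{im\theta}$, $Q=e^{in\theta}$ and splits according to the sign of $m$, whereas you keep $P,Q$ general and split according to whether $\sigma_3\pless\sigma_1$ or $\sigma_1\pless\sigma_3$ (with a sub-split at $l\le k$ versus $l>k$); unwinding either scheme produces exactly the same counts. Your closing Fock-space remark is also on target and anticipates the proof of the more general Lemma~\ref{lem-formal-prod} later in the paper.
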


\begin{proof}
By linearity, we only need to show that for any $m \in \Z$ and any $n \in \N$, the following equality holds:
\begin{align}\label{mul-property}
\Gamma_q[e^{i m \theta}] \Gamma_q[e^{in \theta}]  = \Gamma_q[e^{i (m + n)\theta}].
\end{align}

Recall  by definition, we have
\[
\Gamma_q[e^{i (m + n) \theta}](\sigma_1, \sigma_2)   = q^{- \frac{d(\sigma_1, \sigma_2)}{2}} \mathds{1} (| \sigma_1| - |\sigma_2| = m +n) \cdot \mathds{1}_{\mathcal{C}}(\sigma_1, \sigma_2).
\]

{\flushleft  Case (i): $m < 0$.}  Fix any $\sigma_1, \sigma_2 \in T_q$. We shall use the elementary fact that for any $\tau \in T_q$, the set of ancestors of $\tau$ is a totally ordered set with respect to $\pless$. That is, the following set is a linear set for the partial order $\pless$:
\[
Ancestor(\tau): = \{\sigma\in T_q | \sigma \pless \tau\}.
\]
Therefore, if there exists $\tau \in T_q$ such that
\begin{align}\label{tau-unique-bis}
\left\{
\begin{array}{l}
(\sigma_1, \tau) \in \mathcal{C} \an  (\tau, \sigma_2) \in \mathcal{C}
\vspace{2mm}
\\
|\sigma_1| - | \tau| = m < 0 \an |\tau |  - | \sigma_2| = n \ge 0,
\end{array}
\right.
\end{align}
then $\sigma_1, \sigma_2 \in Ancestor(\tau)$ and
\begin{align}\label{ancestor-conseq}
\sigma_1, \sigma_2 \in \mathcal{C}  \an |\sigma_1| - | \sigma_2| = n + m.
\end{align}
Moreover, if we define
\[
\mathrm{Admissible}(\sigma_1, \sigma_2; m, n): = \Big\{ \tau \in T_q \Big| \text{$\tau$ satisfies the condition \eqref{tau-unique-bis}}\Big\},
\]
then for any pair $(\sigma_1, \sigma_2)$ of vertices in $T_q$ satisfying \eqref{ancestor-conseq}, we have
\[
\#  \mathrm{Admissible}(\sigma_1, \sigma_2; m, n) = q^{\min(-m, n)}.
\]
Combining the above arguments, we obtain
\begin{align*}
& (\Gamma_q[e^{i m \theta}] \Gamma_q[e^{i n \theta}])(\sigma_1, \sigma_2) = \sum_{\tau \in T_q}  \Gamma_q[e^{i m \theta}] (\sigma_1, \tau) \Gamma_q[e^{i n \theta}] (\tau, \sigma_2)
\\
&   = \sum_{\tau \in T_q} q^{- d(\sigma_1, \tau)/2} \mathds{1} (| \sigma_1| - |\tau| = m) \mathds{1}_{\mathcal{C}}(\sigma_1, \tau) \cdot q^{- d(\sigma_2, \tau)/2} \mathds{1} (|\tau| -| \sigma_2|  = n) \mathds{1}_{\mathcal{C}}( \tau, \sigma_2)
\\
& = \sum_{\tau \in \mathrm{Admissible}(\sigma_1, \sigma_2; m, n)}  q^{m/2} q^{-n/2}  =  q^{(m-n)/2} \# \mathrm{Admissible}(\sigma_1, \sigma_2; m, n)
\\
& = q^{(m-n)/2} q^{\min(-m, n)} \mathds{1} (\text{the pair $(\sigma_1, \sigma_2)$ satisfies the condition \eqref{ancestor-conseq}})
\\
& = q^{- \frac{|m+n|}{2}}  \mathds{1}(|\sigma_1| - |\sigma_2| = n +m) \cdot \mathds{1}_\mathcal{C}(\sigma_1, \sigma_2)
\\
& = q^{- \frac{d(\sigma_1, \sigma_2)}{2}}  \mathds{1}(|\sigma_1| - |\sigma_2| = n +m) \cdot \mathds{1}_\mathcal{C}(\sigma_1, \sigma_2).
\end{align*}
Hence we complete the proof of the equality \eqref{mul-property}  in the first case.

{\flushleft Case (ii): $m \ge 0$.}  Fix any $\sigma_1, \sigma_2 \in T_q$. Note that if there exists $\tau \in T_q$ such that
\begin{align}\label{tau-unique}
\left\{
\begin{array}{l}
(\sigma_1, \tau) \in \mathcal{C} \an  (\tau, \sigma_2) \in \mathcal{C}
\vspace{2mm}
\\
|\sigma_1| - | \tau| = m \ge 0 \an |\tau |  - | \sigma_2| = n \ge 0,
\end{array}
\right.
\end{align}
then such $\tau$ is unique and we have $\sigma_2 \pless \tau \pless \sigma_1$ and
\begin{align}\label{comp-2-sigma}
 (\sigma_1, \sigma_2)\in \mathcal{C}\an |\sigma_1| - |\sigma_2| = m + n.
\end{align}
On the other hand, if a pair $(\sigma_1, \sigma_2)$ of vertices satisfies the condition \eqref{comp-2-sigma}, then there exists a unique $\tau \in T_q$ satisfying \eqref{tau-unique}.  Therefore, we obtain
\begin{align*}
& (\Gamma_q[e^{i m \theta}] \Gamma_q[e^{i n \theta}])(\sigma_1, \sigma_2) = \sum_{\tau \in T_q}  \Gamma_q[e^{i m \theta}] (\sigma_1, \tau) \Gamma_q[e^{i n \theta}] (\tau, \sigma_2)
\\
&   = \sum_{\tau \in T_q} q^{- d(\sigma_1, \tau)/2} \mathds{1} (| \sigma_1| - |\tau| = m) \mathds{1}_{\mathcal{C}}(\sigma_1, \tau) \cdot q^{- d(\sigma_2, \tau)/2} \mathds{1} (|\tau| -| \sigma_2|  = n) \mathds{1}_{\mathcal{C}}( \tau, \sigma_2)
\\
& =
\left\{
\begin{array}{ll}
 q^{- \frac{d(\sigma_1, \sigma_2)}{2}} & \text{if $ (\sigma_1, \sigma_2)\in \mathcal{C}\an |\sigma_1| - |\sigma_2| = m + n$}
\vspace{2mm}
\\
0  & \text{otherwise}
\end{array}
\right.
\\
& = q^{- \frac{d(\sigma_1, \sigma_2)}{2}} \mathds{1} (| \sigma_1| - |\sigma_2| = m +n) \cdot \mathds{1}_{\mathcal{C}}(\sigma_1, \sigma_2).
\end{align*}
Hence we complete the proof of the equality \eqref{mul-property}  in the remaining case.
\end{proof}

\begin{lemma}\label{lem-weak-star}
Fix any $\sigma_1, \sigma_2 \in T_q$ and any $\varphi \in L^\infty(\T)$. The following two maps
\[
\begin{array}{ccc}
L^\infty(\T)& \longrightarrow & \C
\\
f & \mapsto & (\Gamma_q[\varphi] \Gamma_q[f])(\sigma_1, \sigma_2)
\end{array}
\quad \an
\begin{array}{ccc}
L^\infty(\T)& \longrightarrow & \C
\\
f & \mapsto & (\Gamma_q[f]\Gamma_q[\varphi])(\sigma_1, \sigma_2)
\end{array}
\]
are continuous with respect to the weak-star topology $\sigma(L^\infty, L^1)$ on $L^\infty(\T)$.
\end{lemma}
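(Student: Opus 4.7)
The plan is, for the first map, to identify an explicit $L^1$-symbol $G$ representing the functional (from which weak-star continuity is immediate), and then handle the second map by an adjoint/conjugation argument. By Theorem~\ref{thm-all-symbol} and $(\Gamma_q[\varphi])^* = \Gamma_q[\bar\varphi]$, the vector $w := \Gamma_q[\bar\varphi]\delta_{\sigma_1}$ belongs to $\ell^2(T_q)$, which rewrites the first matrix entry as an inner product:
\[
L(f) := (\Gamma_q[\varphi]\Gamma_q[f])(\sigma_1,\sigma_2) = \bigl\langle \Gamma_q[f]\delta_{\sigma_2},\,w\bigr\rangle_{\ell^2(T_q)}.
\]

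Grouping the components of $\Gamma_q[f]\delta_{\sigma_2}$ by level $n = |\tau|-|\sigma_2|$ suggests the candidate symbol
\[
G(\theta) := \sum_{n\in\Z} c_n e^{-in\theta}, \qquad c_n := \sqrt{q^{-|n|}}\sum_{\substack{(\tau,\sigma_2)\in\mathcal{C}\\|\tau|-|\sigma_2|=n}}\overline{w(\tau)}.
\]
With $q^n$ summands for $n>0$ (descendants of $\sigma_2$ at distance $n$) and one for $n\le 0$, Cauchy--Schwarz on each level gives $\sum_n|c_n|^2\le\|w\|_{\ell^2}^2$ and thus $G\in L^2(\T)\subset L^1(\T)$. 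I will then verify $L(f) = \frac{1}{2\pi}\int_\T G(\theta)f(\theta)\,d\theta$ for every $f\in L^\infty(\T)$, which immediately forces $L$ to be weak-star continuous. The identity is clear on monomials $f = e^{im\theta}$ (both sides reduce to $c_m$) and hence on all trigonometric polynomials by linearity. For general $f\in L^\infty(\T)$ I approximate by the F\'ejer means $f_N := K_N * f$: trigonometric polynomials satisfying $\|f_N\|_\infty\le\|f\|_\infty$ and $f_N\to f$ weak-star in $L^\infty(\T)$, so $\int G f_N\to\int G f$. The crucial observation is that $\Gamma_q[f_N]\delta_{\sigma_2}$ equals $\Gamma_q[f]\delta_{\sigma_2}$ with each $\tau$-coordinate multiplied by the F\'ejer factor $(1-|n_\tau|/(N+1))^+\in[0,1]$ (with $n_\tau := |\tau|-|\sigma_2|$), a factor dominated by $1$ and converging pointwise to $1$. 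Hence $\ell^2$-dominated convergence (dominated by $\Gamma_q[f]\delta_{\sigma_2}\in\ell^2$) yields $\Gamma_q[f_N]\delta_{\sigma_2}\to\Gamma_q[f]\delta_{\sigma_2}$ in $\ell^2$-norm, so $L(f_N)\to L(f)$; the polynomial identity then passes to the limit.

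For the second map, use $(\Gamma_q[\psi])^*=\Gamma_q[\bar\psi]$ to write
\[
(\Gamma_q[f]\Gamma_q[\varphi])(\sigma_1,\sigma_2) = \overline{(\Gamma_q[\bar\varphi]\Gamma_q[\bar f])(\sigma_2,\sigma_1)}.
\]
Applying the first part (with $\varphi\to\bar\varphi$ and $\sigma_1\leftrightarrow\sigma_2$) shows that $\bar f\mapsto(\Gamma_q[\bar\varphi]\Gamma_q[\bar f])(\sigma_2,\sigma_1)$ is weak-star continuous; composing with the weak-star continuous maps $f\mapsto\bar f$ on $L^\infty(\T)$ and $z\mapsto\bar z$ on $\C$ produces the weak-star continuity of the second functional.

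The most delicate step is the $\ell^2$-dominated convergence $\Gamma_q[K_N*f]\delta_{\sigma_2}\to\Gamma_q[f]\delta_{\sigma_2}$; once the pointwise F\'ejer-factor identity is in hand, the rest reduces to bookkeeping with Fourier coefficients and adjoints.
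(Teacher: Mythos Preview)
Your argument is correct, but it takes a genuinely different route from the paper. The paper expands $(\Gamma_q[\varphi]\Gamma_q[f])(\sigma_1,\sigma_2)$ directly as a sum over $\tau\in T_q$ and then splits into three cases according to whether $\sigma_1,\sigma_2$ are non-comparable, $\sigma_1\pless\sigma_2$, or $\sigma_2\pless\sigma_1$; in each case the infinite tail is rewritten as $\int_{\T} f\psi$ for an explicit $\psi\in L^2(\T)\subset L^1(\T)$ built from the Fourier coefficients of $\varphi$, using only that $\varphi\in L^\infty(\T)\subset L^2(\T)$. The second map is dismissed as ``similar''. Your approach instead invokes the boundedness of $\Gamma_q[\bar\varphi]$ (this is Corollary~\ref{cor-set-eq}, already available at this point in the paper, so there is no circularity) to produce the single vector $w\in\ell^2(T_q)$, then packages the whole functional as pairing with one $L^2$ function $G$ without any case analysis; the adjoint identity $(\Gamma_q[f]\Gamma_q[\varphi])(\sigma_1,\sigma_2)=\overline{(\Gamma_q[\bar\varphi]\Gamma_q[\bar f])(\sigma_2,\sigma_1)}$ then disposes of the second map elegantly. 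The trade-off: the paper's proof is more self-contained (it never uses that $\Gamma_q[\varphi]$ is bounded), while yours is cleaner and avoids the threefold case split at the cost of importing the boundedness result.

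One simplification worth noting: your F\'ejer approximation step is unnecessary. Since $w\in\ell^2(T_q)$ and $\Gamma_q[f]\delta_{\sigma_2}\in\ell^2(T_q)$, the rearranged identity $L(f)=\sum_n c_n\widehat{f}(n)$ holds for every $f\in L^\infty(\T)$ by absolute convergence of the $\ell^2$ inner product; and since $G\in L^2(\T)$ and $f\in L^\infty(\T)\subset L^2(\T)$, Parseval gives $\frac{1}{2\pi}\int_{\T}Gf=\sum_n c_n\widehat{f}(n)$ directly. So $L(f)=\frac{1}{2\pi}\int_{\T}Gf$ for all $f\in L^\infty(\T)$ at once, without passing through trigonometric polynomials.
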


\begin{remark}\label{rmk-w-star}
Clearly, for any $\sigma_1, \sigma_2 \in T_q$, the map
\begin{align}\label{weak-star-one-f}
\begin{array}{ccc}
L^\infty(\T)& \longrightarrow & \C
\\
f & \mapsto & (\Gamma_q[f])(\sigma_1, \sigma_2)
\end{array}
\end{align}
is continuous with respect to the weak-star topology on $L^\infty(\T)$.
\end{remark}

\begin{proof}[Proof of Lemma \ref{lem-weak-star}]
Let us prove the continuity of the map $f \mapsto (\Gamma_q[\varphi] \Gamma_q[f])(\sigma_1, \sigma_2)$ with respect to the weak-star topology on $L^\infty(\T)$. The proof for the other map is similar.
Write
\begin{align}\label{expansion-prod}
(\Gamma_q[\varphi] \Gamma_q[f])(\sigma_1, \sigma_2) = \sum_{\tau \in T_q} q^{-\frac{d(\sigma_1, \tau)}{2}} \widehat{\varphi}(|\sigma_1| - |\tau|)  \mathds{1}_\mathcal{C}(\sigma_1, \tau) \cdot q^{-\frac{d(\tau, \sigma_2)}{2}} \widehat{f}(|\tau|- | \sigma_2|) \mathds{1}_\mathcal{C}( \tau, \sigma_2).
\end{align}
Note that the non-zero contribution in \eqref{expansion-prod} is the summation  over the set
\[
S(\sigma_1, \sigma_2): = \Big\{\tau\in T_q\Big| \text{$(\sigma_1, \tau) \in \mathcal{C}$ and $(\sigma_2, \tau) \in \mathcal{C}$}\Big\}.
\]
We need to deal with the following three cases depending on the relation between the two vertices $\sigma_1, \sigma_2 \in T_q$.
{\flushleft Case (i): $\sigma_1, \sigma_2$ are non-comparable.} In this case, we have
\[
\# S(\sigma_1, \sigma_2)<\infty.
\]
Then since for any $l\in \Z$, the map $f\mapsto \widehat{f}(l)$ is continuous with respect to the weak-star topology on $L^\infty(\T)$, so is the map $f \mapsto (\Gamma_q[\varphi] \Gamma_q[f])(\sigma_1, \sigma_2)$.

{\flushleft Case (ii): $\sigma_1\pless \sigma_2$.} In this case, we have the following partion
\begin{align*}
 S(\sigma_1, \sigma_2) =
 \underbrace{\Big\{\tau\in S(\sigma_1, \sigma_2) \Big| \tau\pless \sigma_2\Big\}}_{\text{denoted $S'(\sigma_1, \sigma_2)$}} \bigsqcup \underbrace{S(\sigma_1, \sigma_2)\setminus S'(\sigma_1, \sigma_2)}_{\text{denoted $S''(\sigma_1, \sigma_2)$}}.
\end{align*}
Clearly, we have
\[
\# S'(\sigma_1, \sigma_2)<\infty.
\]
 Dividing the summation in \eqref{expansion-prod} as the summation over the finite subset $S'(\sigma_1, \sigma_2)$ and the summation over the infinite subset $S''(\sigma_1, \sigma_2)$, we obtain
\[
(\Gamma_q[\varphi] \Gamma_q[f])(\sigma_1, \sigma_2)   = \underbrace{\sum_{\tau \in S'(\sigma_1, \sigma_2)} \cdots}_{\text{denoted $\Sigma'(\varphi, f, \sigma_1, \sigma_2)$}} +  \underbrace{ \sum_{\tau \in S'(\sigma_1, \sigma_2)}  \cdots}_{\text{denoted $\Sigma''(\varphi, f, \sigma_1, \sigma_2)$}}.
\]
Then the map $f \mapsto \Sigma'(\varphi, f, \sigma_1, \sigma_2)$ is continuous with respect to the weak-star topology on $L^\infty(\T)$. For the second term, we have
\begin{align}\label{f-psi-pair}
\begin{split}
\Sigma''(\varphi, f, \sigma_1, \sigma_2) & = \sum_{k=1}^\infty q^{-d(\sigma_1, \sigma_2)/2 - k/2} \widehat{\varphi}(- d(\sigma_1, \sigma_2) - k) q^{-k/2} \widehat{f}(k)  \cdot q^k
\\
&  = q^{-\frac{d(\sigma_1, \sigma_2)}{2}} \sum_{k = 1}^\infty  \widehat{\varphi}(- d(\sigma_1, \sigma_2)-k) \widehat{f}(k)
\\
& = \int_\T  f(e^{i\theta})\underbrace{ \Big( q^{-\frac{d(\sigma_1, \sigma_2)}{2}} \sum_{k=1}^\infty   \widehat{\varphi}(- d(\sigma_1, \sigma_2)-k)  e^{-i k\theta}\Big)}_{\text{denoted $\psi(e^{i\theta})$}} \frac{d\theta}{2\pi}.
\end{split}
\end{align}
The assumption  $\varphi \in L^\infty(\T)\subset L^2(\T)$ implies $\psi\in L^2(\T)\subset L^1(\T)$. Thus the equality \eqref{f-psi-pair} shows that the map $f\mapsto \Sigma''(\varphi, f, \sigma_1, \sigma_2)$ is continuous with respect to the weak-star topology  on $L^\infty(\T)$. This completes the proof of case (ii).

{\flushleft Case (iii): $\sigma_2\pless \sigma_1$.} The proof of case (iii) is similar to that of case (ii).
\end{proof}

\begin{proof}[Proof of Theorem \ref{thm-gen-mul}]
We first show that if either $f \in H^\infty(\T)$ or $g \in H^\infty(\T)$, then we have $\Gamma_q[\bar{f} g] = \Gamma_q[\bar{f}]\Gamma_q[g]$.

{\flushleft Case (i) $g \in H^\infty(\T)$.} In this case, we may find a sequence $(f_n)_{n=1}^\infty$ in $\C[e^{i\theta}, e^{-i \theta}]$ and a sequence $(g_n)_{n=1}^\infty \in \C[e^{i\theta}]$ converging to $f$ and $g$ in $L^\infty(\T)$ respectively in the weak-star topology.   By Lemma \ref{lem-mul}, for any $n, k \ge 1$, we have
\[
\Gamma_q[\bar{f}_k g_n] = \Gamma_q[\bar{f}_k] \Gamma_q[g_n].
\]
In particular,  for any pair of vertices $\sigma_1, \sigma_2 \in T_q$, we have
\[
\Gamma_q[\bar{f}_k g_n]  (\sigma_1, \sigma_2)= \Gamma_q[\bar{f}_k] \Gamma_q[g_n] (\sigma_1, \sigma_2).
\]
Then by Lemma \ref{lem-weak-star} and Remark \ref{rmk-w-star}, we have
\begin{align*}
\Gamma_q[\bar{f}  g] (\sigma_1, \sigma_2) &  = \lim_{k\to\infty} \Big(\lim_{n\to\infty} \Gamma_q[\bar{f}_k g_n]  (\sigma_1, \sigma_2)\Big)
\\
&  =  \lim_{k\to\infty} \Big(\lim_{n\to\infty} \Gamma_q[\bar{f}_k]\Gamma_q[g_n]  (\sigma_1, \sigma_2)\Big)  = \Gamma_q[\bar{f}] \Gamma_q[g] (\sigma_1, \sigma_2).
\end{align*}
Since $\sigma_1, \sigma_2$ are chosen arbitrarily, we obtain the desired equality $\Gamma_q[\bar{f} g] = \Gamma_q[\bar{f}] \Gamma_q[g]$.

{\flushleft Case (ii) $f \in H^\infty(\T)$.} This case can be reduced to case (i). Indeed, we have
\[
\Gamma_q[\bar{f} g] = \Big(\Gamma_q[f \bar{g}]\Big)^*  \stackrel{\text{Case (i)}}{=\joinrel=\joinrel=\joinrel=} \Big(\Gamma_q[\bar{g}] \Gamma_q[f]\Big)^* = \Gamma_q[f]^* \Gamma_q[\bar{g}]^* = \Gamma_q[\bar{f}] \Gamma_q[g].
\]

Now we prove the converse direction: if $\Gamma_q[\bar{f} g] = \Gamma_q[\bar{f}]\Gamma_q[g]$, then either $f\in H^\infty(\T)$ or $g \in H^\infty(\T)$.  In fact,  since $\ell^2(T_q)_{\rad}$ is invariant for both $\Gamma_q[f]$ and $(\Gamma_q[f])^* = \Gamma_q[\bar{f}]$,  so is its orthogonal complement $\ell^2(T_q) \ominus \ell^2(T_q)_{\rad}$. Thus, with respect to the orthogonal decomposition $\ell^2(T_q) = \ell^2(T_q) \oplus \Big( \ell^2(T_q) \ominus \ell^2(T_q)_\rad\Big)$, the operators $\Gamma_q[f]$ can be represented in the following block form:
\begin{align}\label{BT-block-form}
\Gamma_q[f] =
\left[
\begin{array}{cc}
 \Gamma_q[f]\big|_{\ell^2(T_q)_{\rad}} &  0
\\
0 &  \Gamma_q[f]\big|_{\ell^2(T_q) \ominus \ell^2(T_q)_{\rad}}
\end{array}
\right].
\end{align}
Thus by Proposition \ref{prop-bdd} and the block form \eqref{BT-block-form}, the equality $\Gamma_q[\bar{f} g] = \Gamma_q[\bar{f}]\Gamma_q[g]$ implies the equality $T(\bar{f} g) = T(\bar{f})T(g)$ for standard Toeplitz operators and hence, by  Brown and Halmos \cite[Thm. 8]{BH-1963}, either $f\in H^\infty(\T)$ or $g \in H^\infty(\T)$.
\end{proof}

\subsection{General symbols}\label{sec-gen-symbol}

\begin{proposition}\label{prop-norm-poly}
For any analytic trigonometric polynomial $Q \in \C[e^{i\theta}]$, we have
\begin{align}\label{Q-norm-eq}
\|\Gamma_q[Q]\| = \| Q\|_\infty.
\end{align}
\end{proposition}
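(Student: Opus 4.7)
The plan is to combine the lower bound already available from Lemma \ref{lem-one-side} with an upper bound obtained via the $C^*$-identity $\|A\|^2 = \|A^*A\|$, the multiplicativity lemma (Lemma \ref{lem-mul}), and the Hermitian norm equality (Proposition \ref{prop-norm-eq}).

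First, note that Lemma \ref{lem-one-side} (or rather its hypothesis being met) gives the lower bound $\|Q\|_\infty \le \|\Gamma_q[Q]\|$ as soon as we know $\Gamma_q[Q]$ is bounded, which it is since $Q \in L^\infty(\T)$ and we already have $\Gamma_q[Q] \in B(\ell^2(T_q))$ by Corollary \ref{cor-set-eq}. So it suffices to prove the reverse inequality $\|\Gamma_q[Q]\| \le \|Q\|_\infty$.

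For this I would first verify the identity $\Gamma_q[Q]^* = \Gamma_q[\bar Q]$ directly from the definition \eqref{def-T-f}: the adjoint kernel satisfies $\Gamma_q[Q]^*(\sigma_1,\sigma_2) = \overline{\Gamma_q[Q](\sigma_2,\sigma_1)}$, and since $\sqrt{q^{-d(\sigma_1,\sigma_2)}}$ and $\mathds{1}_\mathcal{C}$ are real and symmetric, while $\overline{\widehat{Q}(|\sigma_2|-|\sigma_1|)} = \widehat{\bar Q}(|\sigma_1|-|\sigma_2|)$, the claim follows. Then, since $Q \in \mathbb{C}[e^{i\theta}]$ is analytic and $\bar Q \in \mathbb{C}[e^{i\theta},e^{-i\theta}]$, Lemma \ref{lem-mul} applies to give
\[
\Gamma_q[Q]^*\Gamma_q[Q] \;=\; \Gamma_q[\bar Q]\,\Gamma_q[Q] \;=\; \Gamma_q[\bar Q Q] \;=\; \Gamma_q[|Q|^2].
\]

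Now $|Q|^2 \in L^\infty(\T;\mathbb{R})$, so the Hermitian-case norm formula (Proposition \ref{prop-norm-eq}) yields
\[
\bigl\|\Gamma_q[Q]\bigr\|^2 \;=\; \bigl\|\Gamma_q[Q]^*\Gamma_q[Q]\bigr\| \;=\; \bigl\|\Gamma_q[|Q|^2]\bigr\| \;=\; \bigl\||Q|^2\bigr\|_\infty \;=\; \|Q\|_\infty^2,
\]
which together with the lower bound completes the proof of \eqref{Q-norm-eq}.

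There is no real obstacle here: all three ingredients — the invariant radial subspace argument for the lower bound, the multiplicativity $\Gamma_q[\bar Q]\Gamma_q[Q] = \Gamma_q[|Q|^2]$ (crucially using that $Q$ is analytic so that Lemma \ref{lem-mul} applies with $P = \bar Q$ and this analytic $Q$), and the Hermitian norm identity — are already in place. The only point requiring care is making sure one invokes Lemma \ref{lem-mul} in the correct orientation, i.e.\ with the analytic factor on the right; swapping the roles would require $\bar Q$ to be analytic, which fails unless $Q$ is constant.
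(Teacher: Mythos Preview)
Your proof is correct and follows essentially the same route as the paper: establish $(\Gamma_q[Q])^* = \Gamma_q[\bar Q]$, apply Lemma~\ref{lem-mul} to get $(\Gamma_q[Q])^*\Gamma_q[Q] = \Gamma_q[|Q|^2]$, and then invoke Proposition~\ref{prop-norm-eq} on the real-valued symbol $|Q|^2$. Your separate discussion of the lower bound via Lemma~\ref{lem-one-side} is harmless but redundant, since the chain of equalities $\|\Gamma_q[Q]\|^2 = \|\Gamma_q[|Q|^2]\| = \||Q|^2\|_\infty = \|Q\|_\infty^2$ already gives the full equality, not merely the upper bound.
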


\begin{proof}
Let $Q \in \C[e^{i\theta}]$. Note that $(\Gamma_q[Q])^* = \Gamma_q[\bar{Q}]$.
By Lemma \ref{lem-mul}, we obtain
\[
(\Gamma_q[Q])^*  \Gamma_q[Q] = \Gamma_q[\bar{Q}] \Gamma_q[Q]  =  \Gamma_q[\bar{Q}Q] = \Gamma_q[|Q|^2].
\]
Since $|Q|^2 \in L^\infty(\T; \R)$, we may apply Proposition \ref{prop-norm-eq} to conclude
\[
\|\Gamma_q[Q]\|^2  =\| (\Gamma_q[Q])^*  \Gamma_q[Q]\|  = \| \Gamma_q[|Q|^2]\| = \| |Q|^2\|_\infty = \| Q\|_\infty^2.
\]
The desired equality \eqref{Q-norm-eq} now follows immediately.
\end{proof}

\begin{remark}
Proposition \ref{prop-norm-poly} can be derived from the von-Neumann inequality. Indeed,
Theorem \ref{thm-gen-mul} and Corollary \ref{cor-isometry} imply that
\begin{itemize}
\item $\Gamma_q[e^{i\theta}]$ is an isometry on $\ell^2(T_q)$ and in particular $\| \Gamma_q[e^{i\theta}]\|\le 1$.
\item  $(\Gamma_q[e^{i\theta}])^n = \Gamma_q[e^{in\theta}]$ for any $n\in \N$ and thus $\Gamma_q[Q]   = Q(\Gamma_q[e^{i\theta}])$ for any $Q\in \C[e^{i\theta}]$.
\end{itemize}
Then by the von Neumann inequality (see, e.g., Pisier \cite[Chapter 1]{Pisier-similarity}), we obtain
\[
\| \Gamma_q[Q]\| \le \| Q\|_\infty = \sup_{\theta \in \R} | Q(e^{i\theta})|.
\]
\end{remark}

\begin{proposition}\label{prop-tri-symbol}
For any  trigonometric polynomial $P \in \C[e^{i\theta}, e^{-i \theta}]$, we have
\begin{align}\label{P-norm-eq}
\|\Gamma_q[P]\| = \| P\|_\infty.
\end{align}
\end{proposition}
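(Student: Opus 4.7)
The plan is to reduce the case of a general trigonometric polynomial to the analytic case already handled in Proposition \ref{prop-norm-poly}. Concretely, write $P = \sum_{|n| \leq M} a_n e^{in\theta}$ and fix any integer $N \geq M$. Then
\[
Q := e^{iN\theta}\,P = \sum_{|n|\leq M} a_n e^{i(n+N)\theta}
\]
lies in $\C[e^{i\theta}]$, and the factorization $P = e^{-iN\theta}\,Q$ realizes $P$ as the product of a trigonometric polynomial on the left and an analytic trigonometric polynomial on the right.

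Applying Lemma \ref{lem-mul} with first factor $e^{-iN\theta}\in \C[e^{i\theta},e^{-i\theta}]$ and analytic second factor $Q\in \C[e^{i\theta}]$, I would obtain
\[
\Gamma_q[P] \;=\; \Gamma_q[e^{-iN\theta}\,Q] \;=\; \Gamma_q[e^{-iN\theta}]\,\Gamma_q[Q] \;=\; \bigl(\Gamma_q[e^{iN\theta}]\bigr)^{*}\,\Gamma_q[Q],
\]
using in the last step the elementary identity $(\Gamma_q[f])^{*} = \Gamma_q[\bar f]$. Proposition \ref{prop-norm-poly} then supplies both $\|\Gamma_q[e^{iN\theta}]\| = \|e^{iN\theta}\|_\infty = 1$ and $\|\Gamma_q[Q]\| = \|Q\|_\infty = \|P\|_\infty$. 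Submultiplicativity of the operator norm immediately yields the upper bound $\|\Gamma_q[P]\| \leq \|P\|_\infty$. The reverse inequality $\|P\|_\infty \leq \|\Gamma_q[P]\|$ is already contained in Lemma \ref{lem-one-side} (applied to the bounded formal Fourier series $P$), and combining the two inequalities gives \eqref{P-norm-eq}.

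There is no real obstacle: the entire argument rests on the multiplicativity identity of Lemma \ref{lem-mul} and on the norm equality for analytic symbols from Proposition \ref{prop-norm-poly}. The only delicate point is that Lemma \ref{lem-mul} is not symmetric in its two arguments---the \emph{right} factor must be analytic while the left may be an arbitrary trigonometric polynomial---so one must be careful to place $Q$ on the right and $e^{-iN\theta}$ on the left. Attempting instead a factorization $P = Q'\cdot e^{-iN\theta}$ with $Q'$ analytic would fall outside the hypotheses of Lemma \ref{lem-mul}, and this is precisely the asymmetry that the choice $N\geq M$ circumvents.
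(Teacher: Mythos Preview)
Your proof is correct and follows essentially the same route as the paper's own argument: factor $P = e^{-iN\theta} Q$ with $Q\in\C[e^{i\theta}]$, apply Lemma~\ref{lem-mul} to obtain $\Gamma_q[P] = (\Gamma_q[e^{iN\theta}])^*\Gamma_q[Q]$, bound the norm using Proposition~\ref{prop-norm-poly}, and invoke Lemma~\ref{lem-one-side} for the reverse inequality. Your additional remark on the asymmetry of Lemma~\ref{lem-mul} is apt and correctly identifies why the factorization must be arranged in this order.
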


\begin{proof}
Fix any trigonometric polynomial $P \in \C[e^{i\theta}, e^{-i \theta}]$. Write
\[
P = \sum_{n = -N}^N a_n e^{i n \theta} = e^{-i N \theta} \cdot \underbrace{\sum_{n = -N}^N a_n e^{i (n + N)\theta}}_{\text{denoted  by $Q$}}.
\]
Then $Q  = e^{iN \theta } P \in \C[e^{i\theta}]$ and hence by Lemma \ref{lem-mul}, we have
\[
\Gamma_q[P] = \Gamma_q[ e^{-i N\theta} Q] = \Gamma_q[e^{- i N \theta}] \Gamma_q[Q] =  (\Gamma_q[e^{ i N \theta}])^* \Gamma_q[Q].
\]
Therefore, by Proposition \ref{prop-norm-poly},
\begin{align*}
\| \Gamma_q[P]\| &   =  \| (\Gamma_q[e^{ i N \theta}])^* \Gamma_q[Q]\| \le  \| \Gamma_q[e^{ i N \theta}]\| \cdot \|  \Gamma_q[Q]\|
\\
& = \| e^{i N \theta}\|_\infty \cdot \| Q\|_\infty = \| e^{i N \theta}\|_\infty \cdot \| e^{i N \theta} P\|_\infty = \| P \|_\infty.
\end{align*}
The equality  \eqref{P-norm-eq} now follows immediately by applying the inequality \eqref{sup-norm}.
\end{proof}

We are ready to prove Theorem \ref{thm-all-symbol}. Recall that the $N$-th F\'ejer kernel  defined by
\begin{align}\label{def-Fejer-kernel}
\mathcal{F}_N  (e^{i\theta}): = \sum_{| k | \le N} \left(1 - \frac{|k|}{N+1}\right)  \cdot e^{ik \theta}.
\end{align}
Recall also that for any $f \in L^\infty(\T)$, the convolution $\mathcal{F}_N* f \in \C[e^{i\theta}, e^{-i \theta}]$ and we have
\begin{align}\label{Fejer-contract}
\|\mathcal{F}_N* f\|_\infty \le \| f \|_\infty.
\end{align}
\begin{proof}[Proof of Theorem \ref{thm-all-symbol}]
The set-theoretical equality \eqref{set-th-eq} has already been proved in Corollary \ref{cor-set-eq}.  By Lemma \ref{lem-one-side}, for proving the equality \eqref{norm-2-side}, it remains to prove that for any $f \in L^\infty(\T)$, we have $\| \Gamma_q[f]\|\le \| f\|_\infty$.
Recall the definition \eqref{trunc-toep} of the truncated operator $\Gamma_q^{(n)}[f]$.  Clearly, we have
\[
\|\Gamma_q[f]\| = \sup_{n \ge 1} \| \Gamma_q^{(n)}[f]\|.
\]
Therefore, we only need to show that for any $n \ge 1$, we have
\begin{align}\label{norm-finite-matrix}
\| \Gamma_q^{(n)}[f]\|\le \| f \|_\infty.
\end{align}
But for any fixed integer $n \ge 1$, since the coefficients of the finite matrix $\Gamma_q^{(n)}[f]$ involves only finitely many coefficients $(\widehat{f}(k))_{|k|\le n}$, we have
\[
\lim_{N\to\infty}\| \Gamma_q^{(n)}[f] - \Gamma_q^{(n)}[\mathcal{F}_N* f]\| = 0.
\]
Since $\mathcal{F}_N*f \in \C[e^{i\theta}, e^{-i \theta}]$, we may  apply Proposition \ref{prop-tri-symbol} and use \eqref{Fejer-contract} to obtain
\begin{align*}
\|\Gamma_q^{(n)}[f] \| = \lim_{N\to\infty}\|   \Gamma_q^{(n)}[\mathcal{F}_N* f]\| \le \limsup_{N\to\infty} \| \mathcal{F}_N*f\|_\infty \le \|f \|_\infty.
\end{align*}
This completes the proof of the inequality \eqref{norm-finite-matrix} and hence completes the whole proof.
\end{proof}

\section{The method of full Fock space}\label{sec-fock}

\subsection{Proof of Theorem \ref{thm-direct-sum}}
For $q\ge 2$, let $\mathcal{F}(\C^q)$ denote the Hilbert space of  full Fock space of the Euclidean space $\C^q$:
\[
\mathcal{F}(\C^q): = \C \Omega \oplus \bigoplus_{n = 1}^\infty (\C^q)^{\otimes n},
\]
where $\C\Omega$ is the one-dimensional complex Euclidean space, $\Omega$ is the unit of $\C$  and $(\C^q)^{\otimes n}$ is the Hilbertian tensor product.  Let $e_1, \cdots, e_q$ be the natural basis of $\C^q$, then an orthonormal basis of $\mathcal{F}(\C^q)$ is given by
\begin{align}
\mathscr{B}: = \{ \Omega\} \cup  \bigcup_{n\ge 1} \Big\{e_{i_1} \otimes \cdots \otimes e_{i_n}\Big| (i_1, \cdots, i_n) \in \{1, \cdots, q\}^n \Big\}.
\end{align}
Recall that we denote by $\F_q^{+}$  the unital free semi-group generated by $q$ free elements $s_1, \cdots, s_q$ with   the neutral element $e\in \F_q^+$. Clearly, the natural bijection between $\mathscr{B}$ and $\F_q^+$ (that is, $e_{i_1}\otimes  \cdots \otimes e_{i_n} \mapsto s_{i_1} \cdots s_{i_n}$ and $\Omega \mapsto e$) induces a  unitary isomorphism
\begin{align}\label{def-W}
W:  \ell^2(\F_q^{+})  \xrightarrow{\quad \simeq\quad}  \mathcal{F}(\C^q).
\end{align}
Recall the bijection $\iota: \F_q^+ \rightarrow T_q$ fixed in \eqref{def-iota}. This bijection (combined with the unitary operator $W$ in \eqref{def-W}) induces a natural unitary isomorphism:
\[
U_\iota:  \ell^2(T_q) \xrightarrow{\quad \simeq \quad}\mathscr{F}(\C^q).
\]
For any $h \in \C^q$, we define the left creation operator $\ell(h): \mathcal{F}(\C^q) \rightarrow \mathcal{F}(\C^q)$  by setting
\[
\ell(h) v = h \otimes v, \quad \forall v \in \mathcal{F}(\C^q).
\]

\begin{lemma}\label{lem-gamma-1}
We have the following commutative diagram:
\begin{align}\label{comm-diag-fock}
\begin{CD}
\ell^2(T_q) @>  \quad \Gamma_q[e^{i\theta}] \quad>>   \ell^2(T_q)
\\
@V{U_\iota}V{\simeq }V     @V{U_\iota}V{\simeq}V
\\
 \mathcal{F}(\C^q) @>\, \ell(\frac{e_1 + \cdots + e_q}{\sqrt{q}}) \, >>  \mathcal{F}(\C^q)
\end{CD}.
\end{align}
\end{lemma}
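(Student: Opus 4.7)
The plan is to verify the commutative diagram \eqref{comm-diag-fock} by evaluating both operators on the canonical basis $\{\delta_\sigma\}_{\sigma\in T_q}$ of $\ell^2(T_q)$ and matching the results. This is fundamentally a bookkeeping calculation tracking how the three bijections (the definition of $\Gamma_q[e^{i\theta}]$, the indexation $\iota$, and the Fock-space identification $W$) interact.

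First I would unwind the definition \eqref{def-T-f} for the symbol $f=e^{i\theta}$, where $\widehat{f}(n)=\mathds{1}(n=1)$. The matrix coefficient $\Gamma_q[e^{i\theta}](\sigma_1,\sigma_2)$ is then nonzero only when $(\sigma_1,\sigma_2)\in\mathcal{C}$ and $|\sigma_1|-|\sigma_2|=1$, which forces $\sigma_2\pless\sigma_1$ with $\sigma_1$ a child of $\sigma_2$; in that case $d(\sigma_1,\sigma_2)=1$, so the entry equals $1/\sqrt{q}$. Consequently
\[
\Gamma_q[e^{i\theta}]\,\delta_\sigma \;=\; \frac{1}{\sqrt{q}}\sum_{\tau:\;\tau\text{ is a child of }\sigma}\delta_\tau, \qquad \sigma\in T_q.
\]

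Next I would transport this formula through $\iota^{-1}$ and then through $W$. Writing $\sigma=\iota(w)$ for some $w\in\F_q^+$, the children of $\sigma$ are exactly $\iota(s_1 w),\dots,\iota(s_q w)$ by the defining property of $\iota$. Thus in $\ell^2(\F_q^+)$ the conjugated operator sends $\delta_w$ to $(1/\sqrt{q})\sum_{i=1}^q\delta_{s_i w}$. Applying $W$, which maps $\delta_{s_{j_1}\cdots s_{j_n}}$ to $e_{j_1}\otimes\cdots\otimes e_{j_n}$ (and $\delta_e$ to $\Omega$), the image of $\delta_w$ corresponds to some basis vector $\xi_w\in\mathcal{F}(\C^q)$, and the image of $\delta_{s_i w}$ is $e_i\otimes \xi_w$. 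Hence
\[
U_\iota\,\Gamma_q[e^{i\theta}]\,U_\iota^{-1}\,\xi_w \;=\; \frac{1}{\sqrt{q}}\sum_{i=1}^q e_i\otimes \xi_w \;=\; \ell\!\Big(\tfrac{e_1+\cdots+e_q}{\sqrt{q}}\Big)\,\xi_w,
\]
which is the desired identity on every basis vector and therefore on all of $\mathcal{F}(\C^q)$ by linearity and density.

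The only substantive point requiring care is the orientation convention in \eqref{def-iota}: one must use that the children of $\iota(w)$ are $\iota(s_i w)$ (prepending $s_i$ on the left), which is precisely what makes the emerging structure match left creation $\ell(e_i)$ rather than a right creation operator. No analytical obstacle arises since the operator acts by a finite-support rule, so boundedness is automatic and the identification holds on each basis vector individually.
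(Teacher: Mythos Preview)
Your proof is correct and follows essentially the same approach as the paper: both compute $\Gamma_q[e^{i\theta}]\delta_\sigma = \frac{1}{\sqrt{q}}\sum_{\tau\in D(\sigma)}\delta_\tau$ directly from the definition, then use the property of $\iota$ that the children of $\iota(w)$ are $\iota(s_1 w),\dots,\iota(s_q w)$ to match this with the left creation operator. You spell out the transport through $W$ more explicitly than the paper does, but the argument is the same.
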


\begin{proof}
It suffices to show the following equality
\begin{align}\label{gamma-1-form}
\Gamma_q[e^{i\theta}] \delta_{\iota(w)}=  \frac{1}{\sqrt{q}} \sum_{k=1}^q   \delta_{\iota(s_k w)}.
\end{align}
In fact, by the definition of $\Gamma_q[e^{i\theta}]$,  for any $\sigma_1, \sigma_2 \in T_q$, we have
\begin{align*}
(\Gamma_q[e^{i\theta}] \delta_{\sigma_2})(\sigma_1)& = \sum_{\tau \in T_q} \Gamma_q[e^{i\theta}](\sigma_1, \tau) \delta_{\sigma_2}(\tau) =  \Gamma_q[e^{i\theta}] (\sigma_1, \sigma_2)
\\
& =  q^{- d(\sigma_1, \sigma_2)/2} \mathds{1}(| \sigma_1|- |\sigma_2| =1) \mathds{1}_{\mathcal{C}}(\sigma_1, \sigma_2)
\\
&  = \frac{1}{\sqrt{q}} \mathds{1}(\text{$\sigma_1$ is a child of $\sigma_2$}).
 \end{align*}
For any $\sigma \in T_q$,  let $D(\sigma)\subset T_q$ denote the set consists of all children of $\sigma$. Then
\[
\Gamma_q[e^{i\theta}] \delta_{\sigma} = \frac{1}{\sqrt{q}}\sum_{\tau \in D(\sigma)} \delta_\tau.
\]
This is exactly the desired equality \eqref{gamma-1-form}.
\end{proof}

Given any contractive operator $A: \C^q \rightarrow \C^q$, define $\mathcal{F}(A): \mathcal{F}(\C^q) \rightarrow \mathcal{F}(\C^q)$  by
\begin{align}\label{def-F-V}
\mathcal{F}(A) = Id_{\C \Omega} \oplus \bigoplus_{n\ge 1} A^{\otimes n}.
\end{align}
Clearly, if $A: \C^q \rightarrow \C^q$ is unitary, then so is $\mathcal{F}(A)$.

\begin{lemma}\label{lem-2-cr-op}
Let $V: \C^q \rightarrow \C^q$ be any  unitary operator (not unique) such that
\[
V \left(\frac{e_1+ \cdots + e_q}{\sqrt{q}} \right)=e_1.
\]
Then we have the following commutative diagram
\begin{align}\label{comm-2-creation}
\begin{CD}
\mathcal{F}(\C^q) @>\, \ell(\frac{e_1 + \cdots + e_q}{\sqrt{q}}) \, >>  \mathcal{F}(\C^q)
\\
@V{\mathcal{F}(V)}V{\simeq }V     @V{\mathcal{F}(V)}V{\simeq}V
\\
 \mathcal{F}(\C^q) @>\, \ell(e_1) \, >>  \mathcal{F}(\C^q)
\end{CD}.
\end{align}
\end{lemma}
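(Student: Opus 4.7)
The plan is to verify the commutativity of the diagram by a direct computation on the natural grading of $\mathcal{F}(\C^q) = \C\Omega \oplus \bigoplus_{n \ge 1}(\C^q)^{\otimes n}$. Both compositions $\mathcal{F}(V) \circ \ell(h)$ and $\ell(e_1) \circ \mathcal{F}(V)$ (where $h = \frac{e_1 + \cdots + e_q}{\sqrt{q}}$) are bounded linear operators on $\mathcal{F}(\C^q)$, so it suffices to check that they agree on each homogeneous component. Since $\ell(h)$ shifts the grading by $+1$, I will compare the two operators on $(\C^q)^{\otimes n}$ for $n \ge 0$, with the convention $(\C^q)^{\otimes 0} = \C\Omega$.

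For the vacuum case $n=0$: on one hand $\mathcal{F}(V) \circ \ell(h)(\Omega) = \mathcal{F}(V)(h) = V(h) = e_1$, and on the other $\ell(e_1) \circ \mathcal{F}(V)(\Omega) = \ell(e_1)(\Omega) = e_1$. For $n \ge 1$ and any $v \in (\C^q)^{\otimes n}$, recall from \eqref{def-F-V} that $\mathcal{F}(V)$ acts as $V^{\otimes n}$ on $(\C^q)^{\otimes n}$ and as $V^{\otimes (n+1)}$ on $(\C^q)^{\otimes (n+1)}$. Using the multiplicativity of tensor products, I compute
\[
\mathcal{F}(V)\bigl(\ell(h) v\bigr) = V^{\otimes (n+1)}(h \otimes v) = V(h) \otimes V^{\otimes n}(v) = e_1 \otimes V^{\otimes n}(v) = \ell(e_1)\bigl(\mathcal{F}(V) v\bigr),
\]
which is exactly the commutativity of \eqref{comm-2-creation} on $(\C^q)^{\otimes n}$.

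There is no serious obstacle: the only ingredient beyond routine bookkeeping is the functoriality $V^{\otimes(n+1)} = V \otimes V^{\otimes n}$ acting on elementary tensors, combined with the hypothesis $V(h) = e_1$ applied at the first tensor slot. Putting the vacuum case and the higher-grade cases together yields the commutative diagram \eqref{comm-2-creation} on all of $\mathcal{F}(\C^q)$.
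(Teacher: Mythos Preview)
Your proof is correct and is essentially the same as the paper's, which simply states that the commutative diagram is immediate from the definition of $\mathcal{F}(V)$; you have just spelled out the straightforward graded computation that underlies that remark.
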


\begin{proof}
This is immediate from the definition of $\mathcal{F}(V)$.
\end{proof}

\begin{lemma}\label{lem-cre-direct}
Let $S_1: \ell^2(\N)\rightarrow \ell^2(\N)$ be the left shift operator.
Then $\ell(e_1)$ is unitarily equivalent to the direct sum of the countably infinitely many $S_1$:
\[
\ell(e_1) \simeq
\left[
\begin{array}{cccc}
S_1 & & &
\\
& S_1 & &
\\
& & S_1 &
\\
& & &  \ddots
\end{array}
 \right].
\]
\end{lemma}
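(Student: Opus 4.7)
The plan is to invoke the Wold decomposition for the isometry $\ell(e_1)$ on $\mathcal{F}(\mathbb{C}^q)$, after verifying that its wandering subspace is countably infinite-dimensional and that $\ell(e_1)$ is a pure shift.

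First I would check that $\ell(e_1)$ is an isometry: for any two basis vectors $\xi, \eta \in \mathscr{B}$, we have $\langle e_1 \otimes \xi, e_1 \otimes \eta\rangle = \langle e_1, e_1\rangle \langle \xi, \eta\rangle = \langle \xi, \eta\rangle$, so $\ell(e_1)^*\ell(e_1) = Id$. Next I would identify the wandering subspace
\[
\mathcal{W} := \mathcal{F}(\mathbb{C}^q) \ominus \ell(e_1)\mathcal{F}(\mathbb{C}^q) = \mathbb{C}\Omega \,\oplus\, \bigoplus_{n\ge 1} \mathrm{span}\Big\{ e_{i_1}\otimes\cdots\otimes e_{i_n} \,\Big|\, i_1 \neq 1\Big\},
\]
the point being that $\ell(e_1)\mathcal{F}(\mathbb{C}^q)$ is exactly the closed linear span of those basis tensors whose leftmost factor is $e_1$, so the orthogonal complement is spanned by $\Omega$ together with tensors starting with $e_j$ for $j\ne 1$. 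Since $q\ge 2$, the subspace $\mathcal{W}$ is countably infinite-dimensional; I would fix an orthonormal basis $\{w_k\}_{k\ge 1}$ of $\mathcal{W}$.

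Next I would prove the Wold-type orthogonal decomposition
\[
\mathcal{F}(\mathbb{C}^q) = \bigoplus_{n\ge 0} \ell(e_1)^n \mathcal{W}.
\]
For this, every basis vector $e_{i_1}\otimes\cdots\otimes e_{i_m}\in\mathscr{B}$ (including $\Omega$) can be written uniquely in the form $\ell(e_1)^n \xi$, where $n\ge 0$ is the length of the maximal prefix $e_1\otimes\cdots\otimes e_1$ and $\xi\in\mathcal{W}$ (either $\xi = \Omega$, or $\xi$ starts with some $e_j$, $j\ne 1$). The isometry property of $\ell(e_1)$ then gives the orthogonality of the summands $\ell(e_1)^n\mathcal{W}$, and completeness follows from the decomposition of each basis vector.

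Finally, for each $k\ge 1$, the cyclic subspace
\[
\mathcal{H}_k := \overline{\mathrm{span}}\Big\{\ell(e_1)^n w_k \,\Big|\, n\ge 0\Big\}
\]
is invariant under $\ell(e_1)$, and the map $U_k:\mathcal{H}_k \to \ell^2(\mathbb{N})$ sending $\ell(e_1)^n w_k \mapsto \delta_n$ is a unitary intertwining $\ell(e_1)\big|_{\mathcal{H}_k}$ with the left shift $S_1$. Putting the $U_k$'s together yields a unitary $\mathcal{F}(\mathbb{C}^q)\to\bigoplus_{k\ge 1}\ell^2(\mathbb{N})$ that simultaneously diagonalizes $\ell(e_1)$ as $\bigoplus_{k\ge 1} S_1$.

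There is no real obstacle here; the only point requiring attention is the verification that $\bigoplus_{n\ge 0}\ell(e_1)^n\mathcal{W}$ indeed exhausts $\mathcal{F}(\mathbb{C}^q)$, which is a purely combinatorial statement about the natural basis $\mathscr{B}$, and the observation that $\dim\mathcal{W}=\aleph_0$ which uses $q\ge 2$ (this is what makes the multiplicity of the shift countably infinite rather than finite).
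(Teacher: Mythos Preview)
Your proof is correct and follows essentially the same approach as the paper: both identify the subspace of tensors not starting with $e_1$ (together with $\Omega$) and decompose $\mathcal{F}(\mathbb{C}^q)$ according to the length of the maximal $e_1$-prefix. You phrase this via the Wold decomposition and wandering subspace $\mathcal{W}$, while the paper writes the same decomposition directly as $\mathcal{F}(\mathbb{C}^q)\simeq \ell^2(\mathbb{N};\mathcal{H})$ with $\mathcal{H}$ playing the role of your $\mathcal{W}$; the underlying combinatorial observation is identical.
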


\begin{proof}
For any $n\in\N$, set
\[
e_1^{\otimes n} =
\left\{ \begin{array}{cl} \underbrace{e_1 \otimes \cdots \otimes e_1}_{\text{$n$-times}} & \text{if $n\ge 1$}
\vspace{2mm}
\\
\Omega & \text{if $n =0$}
\end{array}
\right..
\]
Define a closed subspace $\mathcal{H}\subset \mathcal{F}(\C^q)$ by
\[
\mathcal{H}: = \bigoplus_{n \ge 1}  \bigoplus_{(i_1, \cdots, i_n)\in \{1, \cdots, q\}^n \atop i_1 \ne 1 } \C e_{i_1}\otimes \cdots \otimes e_{i_n}
\]
Clearly, we have
\[
\mathcal{F}(\C^q) = \bigoplus_{n \in \N} \Big(\C e_1^{\otimes n} \otimes \mathcal{H} \Big) \simeq \ell^2(\N; \mathcal{H}).
\]
Now it is easy to see that $\ell(e_1)$ is unitarily equivalent to the left shift operator on the Hilbert space $\ell^2(\N; \mathcal{H}) = \ell^2(\N) \otimes \mathcal{H}$ and this completes the proof.
\end{proof}

\begin{proof}[Proof of Theorem \ref{thm-direct-sum}]
Clearly, we have $S_1 = \Gamma_1[e^{i\theta}] = T(e^{i\theta})$. By Lemma \ref{lem-gamma-1},  Lemma \ref{lem-2-cr-op} and  Lemma \ref{lem-cre-direct}, there exists a unitary operator $U: \ell^2(T_q) \rightarrow \bigoplus_{n\ge 1} \ell^2(\N)$ such that
\[
\Gamma_q[e^{i\theta}] = U^{-1} \left[
\begin{array}{cccc}
S_1 & & &
\\
& S_1 & &
\\
& & S_1 &
\\
& & &  \ddots
\end{array}
 \right]   U
=
U^{-1} \left[
\begin{array}{cccc}
\Gamma_1[e^{i\theta}] & & &
\\
& \Gamma_1[e^{i\theta}] & &
\\
& & \Gamma_1[e^{i\theta}] &
\\
& & &  \ddots
\end{array}
 \right]  U.
\]
Thus by Lemma \ref{lem-mul}, for any $Q\in \C[e^{i\theta}]$, we have
\[
\begin{split}
\Gamma_q[Q] & = Q(\Gamma_q[e^{i\theta}])  =U^{-1} \left[
\begin{array}{cccc}
Q(\Gamma_1[e^{i\theta}]) & & &
\\
&  Q(\Gamma_1[e^{i\theta}]) & &
\\
& &  Q(\Gamma_1[e^{i\theta}]) &
\\
& & &  \ddots
\end{array}
 \right]  U
\\
& = U^{-1} \left[
\begin{array}{cccc}
\Gamma_1[Q] & & &
\\
& \Gamma_1[Q] & &
\\
& & \Gamma_1[Q] &
\\
& & &  \ddots
\end{array}
 \right]  U.
\end{split}
\]
Then applying the equality $(\Gamma_q[f])^* = \Gamma_q[\bar{f}]$ and noting that  any $P \in \C[e^{i\theta}, e^{-i\theta}]$ can be written as $P = Q_1 + \bar{Q}_2$ with $Q_1, Q_2 \in \C[e^{i\theta}]$, we obtain
\begin{align}\label{trigo-dir-sum}
\begin{split}
\Gamma_q[P]  = U^{-1} \left[
\begin{array}{cccc}
\Gamma_1[P] & & &
\\
& \Gamma_1[P] & &
\\
& & \Gamma_1[P] &
\\
& & &  \ddots
\end{array}
 \right]  U.
\end{split}
\end{align}
It remains to extend the equality \eqref{trigo-dir-sum} to all $f\in L^\infty(\T)$. For this purpose, we now show that the map
\begin{align}\label{des-w-cont}
L^\infty(\T) \ni f \mapsto  U^{-1} \left[
\begin{array}{cccc}
\Gamma_1[f] & & &
\\
& \Gamma_1[f] & &
\\
& & \Gamma_1[f] &
\\
& & &  \ddots
\end{array}
 \right]  U  \in B(\ell^2(T_q))
\end{align}
is continuous with respect to the weak-star topology on $L^\infty(\T)$ and the weak operator topology on $B(\ell^2(T_q))$. In fact,  by recalling the commutative diagram \eqref{2-way-Toep} and  the elementary fact that the map $ B(\mathcal{H}) \ni B\mapsto ABC \in B(\mathcal{H})$ is continuous under the weak operator topology on $B(\mathcal{H})$ for any fixed $A, C\in B(\mathcal{H})$, it suffices to show the map
\begin{align}\label{des-w-cont-bis}
L^\infty(\T) \ni f \mapsto  DT(f): =   \left[
\begin{array}{cccc}
T_f & & &
\\
& T_f & &
\\
& & T_f &
\\
& & &  \ddots
\end{array}
 \right]  \in B\Big( \bigoplus_{n\ge 1} H^2(\T)\Big),
\end{align}
is continuous with respect to the weak-star topology on $L^\infty(\T)$ and the weak operator topology on $B\Big( \bigoplus_{n\ge 1} H^2(\T)\Big)$,  where $T_f$ is the Toeplitz operator defined in \eqref{Toep-hardy} on the Hardy space $H^2(\T)$. Indeed, take any two elements $\Psi = (\varphi_n)_{n\ge 1}$ and  $\Phi = (\psi_n)_{n\ge 1}$ in $\bigoplus_{n\ge 1}H^2(\T)$,  we have
\begin{align*}
\langle DT(f) \Psi, \Phi\rangle  & = \sum_{n=1}^\infty \langle P_{+} (f \varphi_n), \psi_n\rangle = \sum_{n= 1}^\infty \langle  f\varphi_n, \psi_n\rangle  = \sum_{n=1}^\infty \int_\T f \varphi_n \bar{\psi}_n.
\end{align*}
Now since
\[
\sum_{n=1}^\infty \| \varphi_n \bar{\psi}_n\|_1 \le \sum_{n=1}^\infty \| \varphi_n\|_2 \| \psi_n\|_2 \le   \Big(\sum_{n=1}^\infty \| \varphi_n\|_2^2\Big)^{1/2}  \Big(\sum_{n=1}^\infty \| \psi_n\|_2^2\Big)^{1/2}  = \| \Phi\| \| \Phi\|<\infty,
\]
the series
$
g:  = \sum_{n=1}^\infty \varphi_n \bar{\psi}_n
$
converges absolutely in $L^1(\T)$ and hence the map
\[
L^\infty(\T) \ni f \mapsto \langle DT(f) \Phi, \Psi\rangle  = \int_\T f g \in \C
\]
is continuous with respect to the weak-star topology on $L^\infty(\T)$. Since $\Phi, \Psi$ are chosen arbitrarily, we obtained the desired continuity of the map \eqref{des-w-cont-bis}.

Finally, the mentioned continuity of the map \eqref{des-w-cont} combined with the continuity of the map \eqref{weak-star-one-f} enables us to extend the equality \eqref{trigo-dir-sum}  to all  $f\in L^\infty(\T)$.
\end{proof}

\subsection{Proof of Theorem \ref{thm-generalization}}

Recall the unitary operator $W$ introduced in \eqref{def-W}.
\begin{lemma}\label{lem-gamma-a}
For any $a\in \BS(\C^q)\subset \C^q$, the kernel $\Gamma_{a}[e^{i\theta}]$ induces a bounded operator on $\ell^2(\F_q^{+})$ and  we have the following commutative diagram:
\begin{align}\label{comm-diag-fock-a}
\begin{CD}
\ell^2(\F_q^{+}) @>  \quad \Gamma_{a}[e^{i\theta}] \quad>>   \ell^2(\F_q^{+})
\\
@V{W}V{\simeq }V     @V{W}V{\simeq}V
\\
 \mathcal{F}(\C^q) @>\, \ell(a) \, >>  \mathcal{F}(\C^q)
\end{CD}.
\end{align}
\end{lemma}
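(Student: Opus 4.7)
The plan is to identify $\Gamma_{a}[e^{i\theta}]$ as an operator acting on the natural basis $\{\delta_w : w \in \F_q^{+}\}$ and then read off, via the unitary $W$ of \eqref{def-W}, that it is intertwined with the left creation operator $\ell(a)$ on $\mathcal{F}(\C^q)$. Boundedness will then be automatic, because $\ell(a)$ is an isometry whenever $\|a\| = 1$ and the intertwining is unitary.

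First I would specialise the definition \eqref{def-gamma-a-f} to the symbol $f = e^{i\theta}$, whose Fourier coefficients satisfy $\widehat{f}(k) = \delta_{k,1}$. The constraint $|u| - |v| = 1$ rules out the case ``$v = w u$'' (which would need $|w| = -1$) and forces $u = w v$ with $|w| = 1$, i.e.\ $w = s_k$ for some $k \in \{1, \ldots, q\}$. Reading off the entries from \eqref{def-gamma-a-f} yields the column formula
\[
\Gamma_{a}[e^{i\theta}] \, \delta_v \;=\; \sum_{k=1}^{q} a_k \, \delta_{s_k v}, \qquad v \in \F_q^{+}.
\]

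Next I would apply $W$, which by its very construction sends $\delta_v$ with $v = s_{i_1}\cdots s_{i_n}$ to $e_{i_1}\otimes\cdots\otimes e_{i_n}$ (and $\delta_e$ to $\Omega$). Thus
\[
W \Gamma_{a}[e^{i\theta}] \delta_v \;=\; \sum_{k=1}^{q} a_k \, e_k \otimes W\delta_v \;=\; a \otimes W\delta_v \;=\; \ell(a)\, W \delta_v,
\]
and this identity on the orthonormal basis $\{\delta_v\}$ extends by linearity and continuity to the operator equation $W \Gamma_{a}[e^{i\theta}] = \ell(a) W$, which is exactly the commutative diagram \eqref{comm-diag-fock-a}.

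Finally, for boundedness I would invoke the standard computation $\ell(a)^* \ell(a) = \|a\|^2 \cdot \mathrm{Id}$ on $\mathcal{F}(\C^q)$; under $\|a\| = 1$ this makes $\ell(a)$ an isometry, and conjugating by the unitary $W$ transfers the same property to $\Gamma_{a}[e^{i\theta}]$, which is in particular bounded of norm one. There is no real obstacle: the argument is a direct unwinding of definitions, and the only delicate bookkeeping is the case split ``$u = wv$'' versus ``$v = wu$'' in \eqref{def-gamma-a-f}, which is trivially resolved here by the degree constraint $|u| - |v| = 1$.
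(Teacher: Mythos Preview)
Your proposal is correct and follows essentially the same approach as the paper: both reduce the lemma to verifying the column formula $\Gamma_{a}[e^{i\theta}]\,\delta_v = \sum_{k=1}^{q} a_k\,\delta_{s_k v}$ by specialising \eqref{def-gamma-a-f} to $\widehat{f}(k)=\delta_{k,1}$, after which the commutative diagram and boundedness are immediate from the definition of $W$ and the fact that $\ell(a)$ is an isometry. Your write-up is slightly more explicit about the intertwining step and the boundedness, but the underlying argument is identical.
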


\begin{proof}
It suffices to show the following equality holds for any word $w\in \F_q^{+}$:
\begin{align}\label{gamma-1-form-a}
\Gamma_{a}[e^{i\theta}] \delta_{w}=  \sum_{k=1}^q  a_k  \delta_{s_k w}.
\end{align}
In fact, by the definition of $\Gamma_{a}[e^{i\theta}]$, we have  for any $w_1, w_2 \in \F_q^{+}$,
\begin{align*}
(\Gamma_{a}[e^{i\theta}] \delta_{w_2})(w_1)  =  \Gamma_{a}[e^{i\theta}] (w_1, w_2) = \sum_{k=1}^q \mathds{1}(w_1 = s_k w_2) \cdot [s_k](a) = \sum_{k=1}^q a_k \delta_{s_k w_2} (w_1).
 \end{align*}
This gives exactly the desired equality \eqref{gamma-1-form-a}.
\end{proof}

\begin{lemma}\label{lem-unit-shift}
Let $V_a: \C^q \rightarrow \C^q$ be any  unitary operator  such that $V_a (a)=e_1$.
Then we have the following commutative diagram
\[
\begin{CD}
\mathcal{F}(\C^q) @>\, \ell(a) \, >>  \mathcal{F}(\C^q)
\\
@V{\mathcal{F}(V_a)}V{\simeq }V     @V{\mathcal{F}(V_a)}V{\simeq}V
\\
 \mathcal{F}(\C^q) @>\, \ell(e_1) \, >>  \mathcal{F}(\C^q)
\end{CD}.
\]
\end{lemma}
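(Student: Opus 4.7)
The plan is to verify the desired intertwining identity $\mathcal{F}(V_a)\,\ell(a) = \ell(e_1)\,\mathcal{F}(V_a)$ componentwise on the Fock space grading $\mathcal{F}(\C^q) = \C\Omega \oplus \bigoplus_{n\ge 1}(\C^q)^{\otimes n}$. Since both $\ell(a)$ and $\ell(e_1)$ carry the $n$-th homogeneous component into the $(n{+}1)$-th component, and $\mathcal{F}(V_a)$ acts as $V_a^{\otimes n}$ on the $n$-th piece by definition \eqref{def-F-V}, it suffices to check the identity on an arbitrary pure tensor.

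Concretely, I would fix $n\ge 0$ and a pure tensor $v \in (\C^q)^{\otimes n}$ (with the convention $v=\Omega$ when $n=0$, so $V_a^{\otimes 0}=Id_{\C\Omega}$), and compute both sides. On the left,
\[
\mathcal{F}(V_a)\,\ell(a)\,v \;=\; V_a^{\otimes(n+1)}(a\otimes v) \;=\; V_a(a)\otimes V_a^{\otimes n}(v) \;=\; e_1\otimes V_a^{\otimes n}(v),
\]
using the defining property $V_a(a)=e_1$ together with the multiplicativity $(A\otimes B)(x\otimes y)=(Ax)\otimes(By)$ of tensor products. On the right,
\[
\ell(e_1)\,\mathcal{F}(V_a)\,v \;=\; \ell(e_1)\bigl(V_a^{\otimes n}(v)\bigr) \;=\; e_1\otimes V_a^{\otimes n}(v),
\]
directly from the definition of the left creation operator. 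The two sides coincide, and linearity plus density of pure tensors extends the identity to all of $\mathcal{F}(\C^q)$.

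There is essentially no obstacle here: the lemma is a formal consequence of the functoriality of the tensor-power construction $V\mapsto \mathcal{F}(V)$, combined with the single hypothesis $V_a(a)=e_1$. The only remark worth adding is that because $V_a$ is unitary on $\C^q$, each $V_a^{\otimes n}$ is unitary on $(\C^q)^{\otimes n}$, and hence $\mathcal{F}(V_a)$ is unitary on $\mathcal{F}(\C^q)$; the commutative diagram therefore expresses a genuine unitary equivalence between the left creation operators $\ell(a)$ and $\ell(e_1)$, which is exactly what is needed in the subsequent application to establish \eqref{gamma-direct-gamma}.
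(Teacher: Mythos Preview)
Your proof is correct and is essentially the same as the paper's, which simply states that the lemma is immediate from the definition \eqref{def-F-V} of $\mathcal{F}(V_a)$; you have merely written out in full the componentwise verification that the paper leaves implicit.
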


\begin{proof}
This is immediate from the definition \eqref{def-F-V} of $\mathcal{F}(V_a)$.
\end{proof}

\begin{lemma}\label{lem-conjugate}
Let $a\in \BS(\C^q)$. For any formal Fourier series $f$, we have
\[
\overline{\Gamma_{a}[f] (v, u)} = \Gamma_{a}[\bar{f}](u, v), \quad \forall u, v \in \F_q^{+},
\]
where $\bar{f}$ is the formal Fourier series defined by
\[
\bar{f} \sim \sum_{n\in \Z} \overline{\widehat{f}(n)} e^{- in \theta} = \sum_{n\in \Z} \overline{\widehat{f}(-n)} e^{in \theta}.
\]
\end{lemma}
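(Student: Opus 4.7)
The plan is to verify the identity directly from the definition \eqref{def-gamma-a-f} by a case analysis on the relationship between $u$ and $v$. The key preliminary observation is that for the formal Fourier series $\bar{f}$ defined in the statement, one has $\widehat{\bar{f}}(n) = \overline{\widehat{f}(-n)}$ for every $n \in \Z$, which is immediate from the expansion given.

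With this in hand, I would distinguish three cases according to which branch of \eqref{def-gamma-a-f} applies to the pair $(v,u)$. In Case~(i), where $v = wu$ for some $w \in \F_q^{+}$, formula \eqref{def-gamma-a-f} applied to $(v,u)$ falls into its first branch, yielding $\Gamma_{a}[f](v,u) = \widehat{f}(|v|-|u|)\,[w](a)$, whose conjugate is $\overline{\widehat{f}(|v|-|u|)} \cdot \overline{[w](a)}$. For the same pair, viewed from $(u,v)$, the relation $v=wu$ places us in the second branch of \eqref{def-gamma-a-f} applied to $\bar{f}$, giving
\[
\Gamma_{a}[\bar{f}](u,v) = \widehat{\bar{f}}(|u|-|v|)\,\overline{[w](a)} = \overline{\widehat{f}(|v|-|u|)}\,\overline{[w](a)},
\]
which agrees. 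Case~(ii), where $u = wv$ for some $w$, is symmetric: one now uses the second branch for $\Gamma_{a}[f](v,u)$ and the first branch for $\Gamma_{a}[\bar{f}](u,v)$, and the two sides match in exactly the same way after conjugation and applying $\widehat{\bar{f}}(n)=\overline{\widehat{f}(-n)}$. Case~(iii), when $u$ and $v$ are not comparable in either direction, forces both sides to be zero.

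The only mildly delicate point is the overlap at $u=v$: both branches of \eqref{def-gamma-a-f} are then invoked simultaneously, but they coincide since the empty word $e$ satisfies $[e](a)=1=\overline{[e](a)}$, so no ambiguity arises. I do not expect any serious obstacle here; the lemma is essentially a bookkeeping check to confirm that the kernel $\Gamma_{a}[\cdot]$ interacts with complex conjugation in the natural Hermitian way — the analogue of $T(f)^* = T(\bar f)$ for standard Toeplitz operators — and this will in turn be used in the sequel to pass between $\Gamma_{a}[f]$ and its adjoint, e.g.\ to reduce Theorem~\ref{thm-generalization} for general $f$ to the case $f \in \C[e^{i\theta}]$ via $\Gamma_{a}[f]^* = \Gamma_{a}[\bar{f}]$.
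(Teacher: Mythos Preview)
Your proposal is correct and takes essentially the same approach as the paper: the paper's proof is the single line ``This follows immediately from the definition \eqref{def-gamma-a-f},'' and your case analysis is precisely the unpacking of that sentence.
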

\begin{proof}
This follows immediately from the definition \eqref{def-gamma-a-f}.
\end{proof}

For any formal Fourier series $f$, define the formal product kernel $\Gamma_{a}[f] \Gamma_{a}[e^{i\theta}]$ by
\begin{align}\label{def-prod}
(\Gamma_{a}[f] \Gamma_{a}[e^{i\theta}]) (u, v): = \sum_{w \in \F_q^{+}} \Gamma_{a} [f] (u, w) \Gamma_{a}[e^{i\theta}](w, v).
\end{align}
Note that \eqref{def-prod} is well-defined since for any $v\in \F_q^{+}$, there is only finitely many $w\in \F_q^{+}$ such that $\Gamma_{a}[e^{i\theta}](w, v) \ne 0$.  Similarly, for any $P \in \C[e^{i\theta}, e^{-i\theta}]$, the formal product kernels
\[
\Gamma_{a}[f] \Gamma_{a}[P], \quad \Gamma_{a} [P] \Gamma_{a}[f]
\]
are well-defined.

\begin{lemma}\label{lem-formal-prod}
Let $a\in \BS(\C^q)$. For any formal Fourier series $f$, we have
\[
\Gamma_{a}[e^{i\theta}f]  = \Gamma_{a}[f] \Gamma_{a}[e^{i\theta}],
\]
where $e^{i\theta}f$ is the formal Fourier series defined by
\[
e^{i\theta} f \sim \sum_{n\in \Z} \widehat{f}(n) e^{i (n+1) \theta}=  \sum_{n\in \Z} \widehat{f}(n-1) e^{i n\theta}.
\]
\end{lemma}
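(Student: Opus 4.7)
Both sides of the asserted identity are formal kernels on $\F_q^+\times\F_q^+$, so I verify the equality pointwise. The essential simplification is provided by Lemma~\ref{lem-gamma-a}: the matrix entry $\Gamma_a[e^{i\theta}](w,v)$ is nonzero only when $w=s_kv$ for some $k\in\{1,\ldots,q\}$, in which case it equals $a_k$. Consequently the formal product of \eqref{def-prod} collapses to the finite sum
\[
(\Gamma_a[f]\,\Gamma_a[e^{i\theta}])(u,v)\;=\;\sum_{k=1}^{q} a_k\,\Gamma_a[f](u,s_kv),
\]
and the task reduces to matching this sum with $\Gamma_a[e^{i\theta}f](u,v)$, whose Fourier coefficients satisfy $\widehat{e^{i\theta}f}(n)=\widehat{f}(n-1)$.

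I would then split into four cases according to the relative position of $u$ and $v$ in the semigroup. \emph{Case A:} $u=wv$ with $|w|\ge 1$ and last letter $s_j$. The equation $u=w' s_kv$ in $\F_q^+$ forces $k=j$ and $w'$ to be $w$ with its last letter removed, while the alternative $s_kv=w' u$ is impossible by length. Only the term $k=j$ contributes, and a direct computation gives $a_j\,\widehat{f}(|w|-1)\,[w'](a)=\widehat{f}(|u|-|v|-1)\,[w](a)$, which is exactly $\Gamma_a[e^{i\theta}f](u,v)$. \emph{Case B:} $v=wu$ with $|w|\ge 1$. By length, $u=w'(s_kv)$ is impossible, while $s_kv=w' u$ holds for \emph{every} $k$ with $w'=s_kw$, yielding $\Gamma_a[f](u,s_kv)=\widehat{f}(-|w|-1)\,\overline{a_k}\,\overline{[w](a)}$; summing with weights $a_k$ produces
\[
\widehat{f}(-|w|-1)\,\overline{[w](a)}\cdot\sum_{k=1}^{q}|a_k|^2 \;=\;\widehat{f}(-|w|-1)\,\overline{[w](a)},
\]
once we invoke the normalization $\sum_{k=1}^q|a_k|^2=1$ coming from $a\in\BS(\C^q)$.

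\emph{Case C:} $u=v$. The only decomposition is $s_kv=s_ku$, giving $\Gamma_a[f](u,s_ku)=\widehat{f}(-1)\,\overline{a_k}$ for each $k$, so the sum equals $\widehat{f}(-1)\sum_{k}|a_k|^2=\widehat{f}(-1)=\widehat{e^{i\theta}f}(0)$. \emph{Case D:} $u$ and $v$ are incomparable in $\F_q^+$. Then no $s_kv$ can be comparable to $u$: if $u=w' s_kv$ then $u=(w' s_k)v$, and if $s_kv=w' u$ with $|w'|\ge 1$ one reads off $v=w'' u$; either alternative contradicts the assumed incomparability. Hence each term in the sum vanishes, as does $\Gamma_a[e^{i\theta}f](u,v)$.

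\emph{Main obstacle.} The content of the proof is bookkeeping rather than ideas; the subtle points are enumerating correctly the decompositions witnessing $u\pless s_kv$ or $s_kv\pless u$ and isolating the boundary case $u=v$. The hypothesis $a\in\BS(\C^q)$ enters precisely in Cases B and C, where every $k\in\{1,\ldots,q\}$ contributes simultaneously and the sum must be telescoped via $\sum_{k=1}^{q}|a_k|^2=1$.
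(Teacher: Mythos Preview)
Your proof is correct and follows essentially the same route as the paper's: both reduce the formal product via the identity $(\Gamma_a[f]\Gamma_a[e^{i\theta}])(u,v)=\sum_{k=1}^q a_k\,\Gamma_a[f](u,s_kv)$ and then argue by cases on the relative position of $u$ and $v$. The paper uses three cases (incomparable, $u\pless v$, $v\pless u$ with $u\ne v$), whereas you split the case $u\pless v$ into $u=v$ and $|w|\ge 1$; the computations are otherwise identical, with $\sum_k|a_k|^2=1$ entering at exactly the same spot. Two tiny wording issues you may want to tighten: in Case~A your claim that ``$s_kv=w'u$ is impossible by length'' is literally true only when $|w|\ge 2$ (for $|w|=1$ it is possible with $w'=e$, but then it coincides with the other alternative and causes no double counting), and in Case~D the subcase $s_kv=w'u$ with $|w'|=0$ should also be noted, though it is immediately excluded since it would give $v\pless u$.
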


\begin{proof}
For brevity, set $g = e^{i\theta}f$.  By definition \eqref{def-prod}, for any $u, v \in \F_q^{+}$,
\begin{align}\label{2-gamma-uv}
(\Gamma_{a}[f] \Gamma_{a}[e^{i\theta}]) (u, v) = \sum_{k = 1}^q \Gamma_{a}[f](u, s_k v) a_k.
\end{align}
{\flushleft Case (i): $(u, v)\notin \mathcal{C}$.} In this case, we have $(u, s_k v)\notin \mathcal{C}$ for any $1\le k \le q$. It follows that $\Gamma_{a}[f] \Gamma_{a}[e^{i\theta}] (u, v) =0$. Hence the product kernel $\Gamma_{a}[f] \Gamma_{a}[e^{i\theta}]$ coincides with the kernel $\Gamma_{a}[g]$ on the complement of $\mathcal{C}$ (both of them vanish on the complement of $\mathcal{C}$).
{\flushleft Case (ii): $u\pless v$.} In this case, there exists $w\in \F_q^{+}$ such that $v = wu$. Then
\begin{align*}
\Gamma_{a}[f] (u, s_k v) & = \widehat{f}(|u| - |s_k v|)   \cdot  \overline{[s_kw](a)}
\\
& = \widehat{f}(|u| - |v|-1)   \cdot  \overline{ a_k \cdot [w](a)}
\\
& = \bar{a}_k \widehat{g} (|u| - |v|)   \cdot \overline{[w](a)}.
 \end{align*}
Therefore, by recalling the assumption $\sum_{k=1}^q |a_k|^2 =1$ and by \eqref{2-gamma-uv}, we obtain
\[
(\Gamma_{a}[f] \Gamma_{a}[e^{i\theta}]) (u, v)   = \sum_{k=1}^q |a_k|^2  \widehat{g} (|u| - |v|)    \overline{[w](a)} = \widehat{g} (|u| - |v|) \cdot \overline{[w](a)}.
\]
On the other hand, in case (ii), $\Gamma_{a}[g] (u, v)  = \widehat{g} (|u| - |v|) \cdot \overline{[w](a)}$ and we have
\[
\Gamma_{a}[f] \Gamma_{a}[e^{i\theta}] (u, v) =  \Gamma_{a}[g](u, v).
\]
{\flushleft Case (iii): $v\pless u$ and $u\ne v$.}  In this case, there exist $l \in \{1, \cdots, q\}, w \in \F_q^{+}$ such that $u= ws_l v$. Then
\begin{align*}
\Gamma_{a}[f] (u, s_k v)  &  = \Gamma_{a}[f](w s_l v, s_k v)
\\
& = \mathds{1}(k=l) \cdot \widehat{f}(|ws_l v| - |s_k v|) \cdot  [w] (a)
\\
& = \mathds{1}(k = l)\cdot \widehat{f}(|u| - |v|-1)\cdot [w](a)
\\
& = \mathds{1}(k = l)\cdot \widehat{g}(|u| - |v|)\cdot [w](a).
\end{align*}
Hence by \eqref{2-gamma-uv}, we obtain
\[
(\Gamma_{a}[f] \Gamma_{a}[e^{i\theta}]) (u, v)  =\widehat{g}(|u|-|v|)\cdot [w](a) a_l = \widehat{g}(|u|-|v|) \cdot [ws_l](a).
\]
On the other hand, in case (iii), $\Gamma_{a}[g] (u, v)  = \widehat{g} (|u| - |v|) \cdot [ws_l](a)$ and we have
\[
\Gamma_{a}[f] \Gamma_{a}[e^{i\theta}] (u, v) =  \Gamma_{a}[g](u, v).
\]
Combining the above three cases, we complete the proof of the lemma.
\end{proof}

\begin{corollary}\label{cor-mul}
Let $a\in \BS(\C^q)$. For any formal Fourier series $f$ and any $Q\in \C[e^{i\theta}]$, 
\begin{align}\label{f-Q-prod}
\Gamma_{a}[f]\Gamma_{a}[Q] = \Gamma_{a}[fQ],
\end{align}
where $fQ = Qf$ is the formal Fourier series defined by the product of the trigonometric polynomial $Q$ and the formal Fourier series $f$.
\end{corollary}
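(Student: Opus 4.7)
The plan is to reduce the identity \eqref{f-Q-prod} to the case $Q = e^{in\theta}$ for $n \in \N$ by linearity, and then proceed by induction on $n$, using Lemma \ref{lem-formal-prod} as the only nontrivial ingredient.

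First, I would observe that since $Q = \sum_{n=0}^{N} c_n e^{in\theta}$ is a finite sum, the formal product on the left-hand side of \eqref{f-Q-prod} is well-defined (for each $v \in \F_q^{+}$, only finitely many $w$ contribute to $\Gamma_a[Q](w, v)$), and both sides of \eqref{f-Q-prod} are linear in $Q$. Thus it suffices to show that for each $n \in \N$,
\[
\Gamma_{a}[f]\,\Gamma_{a}[e^{in\theta}] = \Gamma_{a}[e^{in\theta} f].
\]

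Second, the base case $n=0$ reduces to checking that $\Gamma_a[1]$ is the identity kernel on $\F_q^{+}$. This is immediate from definition \eqref{def-gamma-a-f}: the sequence $(\widehat{1}(k))_{k \in \Z}$ is $\delta_{k,0}$, and if $u = wv$ with $|u| = |v|$ then $w = e$ and $u = v$, giving $\Gamma_a[1](u,v) = \delta_{u,v}$. For the inductive step, suppose the identity holds for $n-1$. Using Lemma \ref{lem-formal-prod} applied to the formal Fourier series $e^{i(n-1)\theta}$, we have
\[
\Gamma_a[e^{in\theta}] = \Gamma_a[e^{i(n-1)\theta}]\,\Gamma_a[e^{i\theta}].
\]
Substituting and using associativity of the formal product (which is valid because the right-most factor $\Gamma_a[e^{i\theta}]$ is column-finite, so the order of summation can be rearranged), followed by the induction hypothesis and one more application of Lemma \ref{lem-formal-prod} to the formal Fourier series $e^{i(n-1)\theta}f$, yields
\[
\Gamma_a[f]\,\Gamma_a[e^{in\theta}] = \Gamma_a[f]\,\Gamma_a[e^{i(n-1)\theta}]\,\Gamma_a[e^{i\theta}] = \Gamma_a[e^{i(n-1)\theta}f]\,\Gamma_a[e^{i\theta}] = \Gamma_a[e^{in\theta}f],
\]
which closes the induction.

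The only point requiring a bit of care is the justification of associativity in the displayed chain above: one must verify that for a formal Fourier series $g$, the iterated sum defining $\bigl(\Gamma_a[f]\Gamma_a[g]\bigr)\Gamma_a[e^{i\theta}]$ agrees with $\Gamma_a[f]\bigl(\Gamma_a[g]\Gamma_a[e^{i\theta}]\bigr)$. This is not an analytic obstacle: for each fixed $v \in \F_q^{+}$, the kernel $\Gamma_a[e^{i\theta}](\cdot, v)$ is supported on the finite set $\{s_1 v, \ldots, s_q v\}$, so the iterated sums reduce to finite linear combinations of the values $\Gamma_a[f]\Gamma_a[g](\cdot, s_k v)$, and Fubini is trivial. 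Hence the entire argument is a clean induction, with Lemma \ref{lem-formal-prod} carrying all the combinatorial weight.
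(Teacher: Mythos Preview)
Your proof is correct and follows essentially the same approach as the paper: reduce to monomials $Q=e^{in\theta}$ by linearity, then induct on $n$ using Lemma~\ref{lem-formal-prod}. The paper's proof is the one-line sketch ``By Lemma~\ref{lem-formal-prod} and an induction argument,'' and you have filled in the details (base case, associativity check) carefully and correctly.
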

\begin{proof}
By Lemma \ref{lem-formal-prod} and an induction argument,
we obtain $\Gamma_{a}[f]\Gamma_{a}[e^{i n\theta}] = \Gamma_{a}[e^{in\theta} f]$ for all $n\in \N$.  We  complete the proof by linearity  in $Q$ of both sides of the equality  \eqref{f-Q-prod}.
\end{proof}

\begin{lemma}\label{lem-trigo-general}
Let $a\in \BS(\C^q)$. For any  $Q\in \C[e^{i\theta}]$, the kernel $\Gamma_{a}[Q]$ induces a bounded operator  on $\ell^2(\F_q^{+})$ and  we have the following equality of bounded operators:
\begin{align}\label{Q-gamma-a}
\Gamma_{a}[Q] = Q( \Gamma_{a}[e^{i\theta}]).
\end{align}
Consequently, for any $P\in \C[e^{i\theta}, e^{-i \theta}]$, the kernel $\Gamma_a[P]$ induces a bounded operator. Moreover, there exists a unitary operator $U_a: \ell^2(\F_q^{+}) \rightarrow  \bigoplus_{k=1}^\infty \ell^2(\N)$ such that for any $P \in \C[e^{i\theta}, e^{-i\theta}]$, we have
\begin{align}\label{trigo-dir-sum-general}
\begin{split}
\Gamma_a[P]  = U_a^{-1} \left[
\begin{array}{cccc}
\Gamma_1[P] & & &
\\
& \Gamma_1[P] & &
\\
& & \Gamma_1[P] &
\\
& & &  \ddots
\end{array}
 \right]  U_a.
\end{split}
\end{align}
\end{lemma}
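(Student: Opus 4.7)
The plan is to first unitarily identify the single operator $\Gamma_a[e^{i\theta}]$ with the countable direct sum of copies of the unilateral shift, and then propagate this identification to all of $\C[e^{i\theta}, e^{-i\theta}]$ via polynomial functional calculus together with the adjoint relation.

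First I would construct $U_a$ by composing the three unitary identifications already in hand. Via the isomorphism $W$ of \eqref{def-W} combined with Lemma \ref{lem-gamma-a}, the kernel $\Gamma_a[e^{i\theta}]$ corresponds to the left creation operator $\ell(a)$ on $\mathcal{F}(\C^q)$; via $\mathcal{F}(V_a)$ from Lemma \ref{lem-unit-shift}, $\ell(a)$ becomes $\ell(e_1)$; and by Lemma \ref{lem-cre-direct}, $\ell(e_1)$ is unitarily equivalent to $\bigoplus_{k=1}^\infty S_1 = \bigoplus_{k=1}^\infty \Gamma_1[e^{i\theta}]$ on $\bigoplus_{k=1}^\infty \ell^2(\N)$. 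Composing these three unitaries yields $U_a:\ell^2(\F_q^+)\to\bigoplus_{k=1}^\infty \ell^2(\N)$ intertwining $\Gamma_a[e^{i\theta}]$ with $\bigoplus_{k=1}^\infty \Gamma_1[e^{i\theta}]$. In particular, $\Gamma_a[e^{i\theta}]$ is a bounded contraction on $\ell^2(\F_q^+)$.

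Next, applying Lemma \ref{lem-formal-prod} with $f = e^{i(n-1)\theta}$ and induction on $n\ge 0$ (the base case $n=0$ following from $\Gamma_a[1] = \mathrm{Id}$, which is immediate from \eqref{def-gamma-a-f}), I get $\Gamma_a[e^{in\theta}] = \Gamma_a[e^{i\theta}]^n$. Linearity then gives $\Gamma_a[Q] = Q(\Gamma_a[e^{i\theta}])$ for every $Q \in \C[e^{i\theta}]$, which is the content of \eqref{Q-gamma-a} and in particular shows that $\Gamma_a[Q]$ is bounded. Since $U_a$ intertwines $\Gamma_a[e^{i\theta}]$ with $\bigoplus_{k=1}^\infty \Gamma_1[e^{i\theta}]$, polynomial functional calculus transports this to an intertwining of $\Gamma_a[Q] = Q(\Gamma_a[e^{i\theta}])$ with $\bigoplus_{k=1}^\infty Q(\Gamma_1[e^{i\theta}]) = \bigoplus_{k=1}^\infty \Gamma_1[Q]$, where the last identity is the classical $q=1$ analogue of \eqref{Q-gamma-a} (provable in exactly the same way, or read off directly from \eqref{cla-toep}). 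This establishes \eqref{trigo-dir-sum-general} in the analytic case.

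Finally, for general $P\in \C[e^{i\theta}, e^{-i\theta}]$ I would write $P = Q_1 + \overline{Q_2}$ with $Q_1, Q_2 \in \C[e^{i\theta}]$. By Lemma \ref{lem-conjugate} applied to both $\Gamma_a$ and $\Gamma_1$, one has $\Gamma_a[\overline{Q_2}] = \Gamma_a[Q_2]^*$ and $\Gamma_1[\overline{Q_2}] = \Gamma_1[Q_2]^*$, while linearity of the kernel construction \eqref{def-gamma-a-f} in the formal Fourier series gives $\Gamma_a[P] = \Gamma_a[Q_1] + \Gamma_a[Q_2]^*$ and likewise for $\Gamma_1[P]$. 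Since unitary conjugation commutes with taking adjoints and with finite sums, the \emph{same} $U_a$ constructed above intertwines $\Gamma_a[P]$ with $\bigoplus_{k=1}^\infty \Gamma_1[P]$, yielding \eqref{trigo-dir-sum-general} in full generality. I do not anticipate a genuine obstacle in this argument: all the substantive work was front-loaded into the Fock-space identification of $\Gamma_a[e^{i\theta}]$; the only point requiring a moment of care is that the same $U_a$ must work simultaneously for every $P$, but this is automatic because $U_a$ depends only on the single generator $\Gamma_a[e^{i\theta}]$ and all subsequent operators are built from it by polynomial combinations and adjoints.
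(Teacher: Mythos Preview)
Your proposal is correct and follows essentially the same approach as the paper: both arguments use Lemma~\ref{lem-gamma-a} to identify $\Gamma_a[e^{i\theta}]$ with a left creation operator, combine Lemmas~\ref{lem-unit-shift} and~\ref{lem-cre-direct} to obtain the unitary $U_a$, establish $\Gamma_a[e^{in\theta}] = (\Gamma_a[e^{i\theta}])^n$ by induction from Lemma~\ref{lem-formal-prod} (the paper phrases this via its Corollary~\ref{cor-mul}, which is the same thing), and then handle general $P$ via the decomposition $P = Q_1 + \overline{Q_2}$ together with Lemma~\ref{lem-conjugate}. The only cosmetic difference is the order of exposition---you build $U_a$ first and then propagate, whereas the paper first proves \eqref{Q-gamma-a} and then assembles the unitary equivalence---but the logical content is identical.
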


\begin{proof}
Lemma \ref{lem-gamma-a} implies that $\Gamma_a[e^{i\theta}]$ is a bounded operator. Therefore, by Corollary \ref{cor-mul}, we have  the equality of kernels $\Gamma_a[e^{in\theta}] = \Gamma_a[e^{i\theta}] \cdots \Gamma_a[e^{i\theta}]$ for all $n\in \N$ (product is understood as formal product of kernels). Thus $\Gamma_a[e^{in\theta}]$ is a bounded  operator for all $n\in \N$. It follows that $\Gamma_a[Q]$ is a bounded operator for all $Q\in \C[e^{i\theta}]$. In particular, the equalities  $\Gamma_a[e^{in\theta}] = \Gamma_a[e^{i\theta}] \cdots \Gamma_a[e^{i\theta}]  = (\Gamma_a[e^{i\theta}])^n$ (now understood as equalities of bounded operators), combined with the linearity,  imply the desired equality \eqref{Q-gamma-a}.  The second assertion is proved as follows:  if $P = Q_1 + \bar{Q}_2$ with $Q_1, Q_2 \in \C[e^{i\theta}]$,  then by applying Lemma \ref{lem-conjugate}, we have  $\Gamma_a[P] = \Gamma_a[Q_1] + (\Gamma_a[Q_2])^*$.

Finally, the proof of the equality \eqref{trigo-dir-sum-general} is similar to that of the equality \eqref{trigo-dir-sum} and  follows from Lemmas \ref{lem-cre-direct}, \ref{lem-gamma-a}, \ref{lem-unit-shift} and the equality \eqref{Q-gamma-a}.
\end{proof}

\begin{proof}[Proof of Theorem \ref{thm-generalization}]
We first extend the equality \eqref{trigo-dir-sum-general} to all $f\in L^\infty(\T)$. For any $f\in L^\infty(\T)$, we define temporarily a bounded operator $\widetilde{\Gamma}_a[f]$ as follows:
\[
\widetilde{\Gamma}_a[f]:  = U_a^{-1} \left[
\begin{array}{cccc}
\Gamma_1[f] & & &
\\
& \Gamma_1[f] & &
\\
& & \Gamma_1[f] &
\\
& & &  \ddots
\end{array}
 \right]  U_a.
\]

Fix any $u, v \in \F_q^{+}$. The equality \eqref{trigo-dir-sum-general} means that $\Gamma_a[P] = \widetilde{\Gamma}_a[P]$ for all $P\in \C[e^{i\theta}, e^{-i\theta}]$ and in particular, we have
\begin{align}\label{gamma-tilde-gamma}
\Gamma_a[P] (u, v)= \widetilde{\Gamma}_a[P] (u, v).
\end{align}
Clearly,  the map $L^\infty(\T)\ni f\mapsto \Gamma_a[f](u, v) \in \C$ is continuous with respect to the weak-star topology on $L^\infty(\T)$. On the other hand, similar to the proof of the continuity result for the map \eqref{des-w-cont}, we can show that the map $L^\infty(\T) \ni f \mapsto \widetilde{\Gamma}_a[f]$ is continuous with respect to the weak-star topology on $L^\infty(\T)$ and the weak operator topology on $B(\ell^2(\F_q^{+}))$. In particular, it follows that the map $L^\infty(\T)\ni f\mapsto  \widetilde{\Gamma}_a[f](u, v) \in \C$ is continuous with respect to the weak-star topology on $L^\infty(\T)$. Therefore, we may extend the equality \eqref{gamma-tilde-gamma} to all $f\in L^\infty(\T)$:
\[
\Gamma_a[f] (u, v)= \widetilde{\Gamma}_a[f] (u, v).
\]
Since $u, v$ are chosen arbitrarily, we obtain the equality of two kernels $\Gamma_a[f] = \widetilde{\Gamma}_a[f]$. Consequently, the kernel $\Gamma_a[f]$ induces a bounded operator (since so does $\widetilde{\Gamma}_a[f]$) and the desired equality \eqref{gamma-direct-gamma}  holds.

It remains to show that if the kernel $\Gamma_{a}[f]$ defines a bounded operator on $\ell^2(\F_q^{+})$, then  $f\in L^\infty(\T)$. Note that if the kernel $\Gamma_{a}[f]$ defines a bounded operator, then so is the new kernel  $\Gamma_{a, t}[f]$ obtained by  gauge transformation of $\Gamma_a[f]$ for any $t\in \R/2\pi \Z$:
\begin{align}\label{def-gauge-trans}
\Gamma_{a, t}[f](u, v):  = e^{-i t |u|} \Gamma_a[f](u,v) e^{i t|v|}.
\end{align}
Moreover, we have the operator norm equality
\begin{align}\label{2-norm-eq}
\| \Gamma_{a, t}[f]\| = \| \Gamma_a[f]\|.
\end{align}
For any $t\in \R/2\pi \Z$,  set a new formal Fourier transform $f_t$ by
\begin{align}\label{def-f-t}
f_t \sim \sum_{n\in \Z} \widehat{f}(n) e^{-i nt} e^{i n \theta}.
\end{align}
Clearly, from the definition \eqref{def-gamma-a-f} and  the definition \eqref{def-gauge-trans}, we have
\begin{align}\label{id-gauge-trans}
\Gamma_{a, t}[f]  = \Gamma_a[f_t].
\end{align}
Recall the notaion $\mathcal{F}_N$ in \eqref{def-Fejer-kernel} for the F\'ejer kernel. Note that we have
\begin{align}\label{def-F-N-f}
\int_\T f_t \mathcal{F}_N(e^{it}) \frac{dt}{2\pi} =  \underbrace{ \sum_{|k|\le N} \left(1 - \frac{|k|}{N+1}\right) \widehat{f}(n) e^{in\theta}}_{=: \mathcal{F}_N* f} \in \C[e^{i\theta}, e^{-i\theta}].
\end{align}
Since the map $f \mapsto \Gamma_q[f]$ is linear from the linear  space of formal Fourier series to the set of kernels on $\F_q^{+}$, for any $N\ge 1$,  we have
\[
\Gamma_a[\mathcal{F}_N*f]  =   \Gamma_{a}\left[\int_\T f_t \mathcal{F}_N(e^{it}) \frac{dt}{2\pi}  \right] = \int_\T  \Gamma_{a}[f_t] \mathcal{F}_N(e^{it}) \frac{dt}{2\pi} = \int_\T  \Gamma_{a, t}[f] \mathcal{F}_N(e^{it}) \frac{dt}{2\pi}.
\]
Therefore, by noting the equality \eqref{2-norm-eq} and elementary inequality $\mathcal{F}_N(e^{it}) \ge 0$, we have
\[
\| \Gamma_a[\mathcal{F}_N*f]\| \le \int_\T \| \Gamma_{a,t}[f] \mathcal{F}_N(e^{it})\| \frac{dt}{2\pi} =  \int_\T \| \Gamma_{a}[f] \| \mathcal{F}_N(e^{it}) \frac{dt}{2\pi}  = \| \Gamma_{a}[f] \|.
\]
But since $\mathcal{F}_N*f\in \C[e^{i\theta}, e^{-i\theta}]$ for any integer $N\ge 1$, we can  use the equality \eqref{trigo-dir-sum-general} and the classical result on standard Toeplitz operators to conclude that
\[
\| \Gamma_a[\mathcal{F}_N*f]\| = \| \mathcal{F}_N*f\|_{L^\infty(\T)}.
\]
 Thus we obtain
\[
\| \mathcal{F}_N*f\|_{L^\infty(\T)} \le \| \Gamma_a[f]\|, \quad \forall N\ge 1.
\]
Then by a standard argument, we obtain $f\in L^\infty(\T)$.
\end{proof}

\subsection{A new proof of Theorem \ref{thm-QW-I}}\label{sec-new-pf}

Before proceeding to the new proof of Theorem \ref{thm-QW-I}, let us give the following warning.
{\flushleft \bf Warning:} Thereom \ref{thm-QW-I} was used in our  proof of the implication $f\in L^\infty(\T)\Longrightarrow \Gamma_q[f]\in B(\ell^2(T_q))$. Therefore, in  the following new proof of Theorem \ref{thm-QW-I}, we should avoid the use of  the above implication.  However,  since the proof of the equality \eqref{trigo-dir-sum} does not involve Theorem \ref{thm-QW-I},  we are allowed to use the implication $P\in \C[e^{i\theta}, e^{-i\theta}] \Longrightarrow \Gamma_q[P] \in B(\ell^2(T_q)$.

\bigskip

We will use  the classical result on Toeplitz operators: let $f\in L^\infty(\T)$, then $T(f)=\Gamma_1[f]$ is a positive operator if and only if $f \ge 0$.

Recall the notaion $\mathcal{F}_N$ in \eqref{def-Fejer-kernel} for the F\'ejer kernel. Let $\mu$ be any positive Radon measure on $\T$, we want to show that the kernel $\Gamma_q[\mu]$ is positive definite. Indeed, for any $N\ge 1$,  the trigonometric polynomial $\mathcal{F}_N* \mu\in \C[e^{i\theta}, e^{-i\theta}]$ defines a non-negative continuous function on $\T$. Therefore, by equality \eqref{trigo-dir-sum} and the above classical result on standard Toeplitz operators, the kernel $\Gamma_q[\mathcal{F}_N*\mu]$ is positive definite. It follows that, as the coordinatewise limit of $\Gamma_q[\mathcal{F}_N*\mu]$ as $N\to\infty$, the kernel $\Gamma_q[\mu]$ is also positive definite.

Conversely, assume that for a formal Fourier series $f$, the kernel $\Gamma_q[f]$ is positive definite. We want to show that there exists a positive Radon measure $\mu$ on $\T$ such that $\Gamma_q[f] = \Gamma_q[\mu]$.  Indeed, since the kernel $\Gamma_q[f]$ is positive definite, so is the following kernel $\Gamma_{q, t}[f]$ (for any $t \in \R/2\pi \Z$) after a gauge transformation
\[
\Gamma_{q, t}[f]( \sigma_1, \sigma_2) : =  e^{-i t |\sigma_1|}\Gamma_q[f](\sigma_1, \sigma_2) e^{i t |\sigma_2|} = q^{-d(\sigma_1, \sigma_2)/2} \widehat{f}(|\sigma_1| - |\sigma_2|)  e^{-i t (|\sigma_1|-|\sigma_2|)} \mathds{1}_{\mathcal{C}}(\sigma_1, \sigma_2).
\]
Recall the definition \eqref{def-f-t} for  $f_t$, similar to the equality \eqref{id-gauge-trans}, here we have
$
\Gamma_{q, t}[f] = \Gamma_q[f_t].
$
Recall the equality \eqref{def-F-N-f}. Since the map $f \mapsto \Gamma_q[f]$ is linear from the linear  space of formal Fourier series to the set of kernels on $T_q$ and $\mathcal{F}_N(e^{it}) \ge 0$ on $\T$, the positive definiteness of $\Gamma_q[f_t]$ for all $t\in \T$ implies the positive definiteness of the following  kernel:
\begin{align*}
\Gamma_q\left[\mathcal{F}_N* f\right]  = \Gamma_q\left[\int_\T f_t \mathcal{F}_N(e^{it}) \frac{dt}{2\pi}\right] = \int_\T \Gamma_q[f_t] \mathcal{F}_N(e^{it}) \frac{dt}{2\pi}.
\end{align*}
 By the equality \eqref{trigo-dir-sum}, the kernel  $\Gamma_q\left[\mathcal{F}_N* f\right]$ defines a bounded operator. Note that the positive definiteness of the kernel of a bounded operator is equivalent to the condition that the bounded operator is positive. Therefore,  the standard Toeplitz operator
\[
\Gamma_1\left[ \mathcal{F}_N* f \right]  = T\left( \mathcal{F}_N* f\right)
\]
is positive and hence   $\mathcal{F}_N* f \ge 0$  on $\T$. In particular,
\[
  \| \mathcal{F}_N* f\|_1= \int_\T  \mathcal{F}_N * f =  \widehat{f}(0).
\]
It follows that  $(\mathcal{F}_N*f)_{N=1}^\infty$ defines a sequence of positive Radon measures on $\T$ with the same total weight $\widehat{f}(0)$. Clearly,  for any $n\in \Z$,  the Fourier coefficients converges:
\[
\lim_{N\to\infty}\widehat{\mathcal{F}_N*f}(n)  = \widehat{f}(n).
\]
Therefore, we have the weak convergence of positive Radon measure:
\[
\lim_{N\to\infty} (\mathcal{F}_N*f)(e^{i\theta}) \frac{d\theta}{2\pi} = \mu(d\theta),
\]
where $\mu$ is a positive Radon measure on $\T$ with total weight $\mu(\T) = \widehat{f}(0)$. Hence the formal Fourier series coincides with the Fourier series of the positive Radon measure $\mu$.

The proof is complete.

\subsection{Proof of Theorem \ref{thm-gen-pos}} The proof of  Theorem \ref{thm-gen-pos} is  similar to our new proof of Theorem \ref{thm-QW-I} in \S \ref{sec-new-pf}.

\subsection{Proof of Theorem \ref{prop-pos-op}}

\begin{lemma}\label{lem-crea-tensor}
For any $A = (A_1, \cdots, A_q)\in B(\mathcal{H})^q$, we have the unitary equivalence
\begin{align}\label{uni-eq-gamma}
\Gamma_A[e^{i\theta}] \sim \ell(e_1)\otimes A_1 + \cdots + \ell(e_q) \otimes A_q.
\end{align}
In particular,
\begin{align}\label{n-eq-gamma}
\| \Gamma_A[e^{i\theta}]\| =  \Big\| \sum_{k=1}^q   A_k^* A_k\Big\|^{1/2}.
\end{align}
\end{lemma}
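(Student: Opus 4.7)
The plan is to establish the unitary equivalence \eqref{uni-eq-gamma} by computing the action of $\Gamma_A[e^{i\theta}]$ on basis vectors of $\ell^2(\F_q^+) \otimes \mathcal{H}$ and matching it with the action of $\sum_k \ell(e_k)\otimes A_k$ on $\mathcal{F}(\C^q) \otimes \mathcal{H}$ after transport via the unitary $W \otimes \mathrm{Id}_{\mathcal{H}}$, where $W$ is the unitary defined in \eqref{def-W}. The norm identity \eqref{n-eq-gamma} will then follow from an explicit computation of $(\Gamma_A[e^{i\theta}])^*\Gamma_A[e^{i\theta}]$ using the fact that the creation operators $\ell(e_1),\dots,\ell(e_q)$ have pairwise orthogonal ranges.

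First I would unravel the definition \eqref{def-gamma-a-f-op} applied to $f(\theta) = e^{i\theta}$, i.e., $\widehat{f}(n) = \delta_{n,1}$. Among the three cases in \eqref{def-gamma-a-f-op}, only $u = wv$ with $|u|-|v|=1$ contributes, which forces $w = s_k$ for some $k \in \{1,\dots,q\}$ and gives $[s_k](A) = A_k$; the other case $v = wu$ forces $|u|-|v| \le -1$, so $\widehat{f}(|u|-|v|)=0$. Consequently, for every $v \in \F_q^+$ and $h\in \mathcal{H}$,
\[
\Gamma_A[e^{i\theta}](\delta_v \otimes h) \;=\; \sum_{k=1}^q \delta_{s_k v} \otimes A_k h.
\]

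Next I would transport this identity via $W\otimes \mathrm{Id}_{\mathcal{H}}$. Since $W$ sends the standard basis vector $\delta_v$ in $\ell^2(\F_q^+)$ to the corresponding tensor $\widetilde{v} \in \mathcal{F}(\C^q)$, and since $\ell(e_k)\widetilde{v} = e_k\otimes \widetilde{v}$ is identified with $\delta_{s_k v}$, one reads
\[
(W\otimes \mathrm{Id}_{\mathcal{H}})\,\Gamma_A[e^{i\theta}](\delta_v \otimes h)
\;=\;
\left(\sum_{k=1}^q \ell(e_k)\otimes A_k\right)(\widetilde{v}\otimes h).
\]
As $\{\widetilde{v}\otimes h : v\in \F_q^+,\,h\in \mathcal{H}\}$ spans a dense subspace, this establishes \eqref{uni-eq-gamma}.

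For \eqref{n-eq-gamma}, using the unitary equivalence and the isometry relations $\ell(e_k)^*\ell(e_j) = \delta_{kj}\,\mathrm{Id}_{\mathcal{F}(\C^q)}$ (which follow at once from $\langle e_k\otimes v, e_j\otimes w\rangle = \delta_{kj}\langle v,w\rangle$), I would compute
\[
\Bigl(\sum_{k}\ell(e_k)\otimes A_k\Bigr)^{\!*}\Bigl(\sum_{j}\ell(e_j)\otimes A_j\Bigr)
\;=\; \sum_{k,j}\ell(e_k)^*\ell(e_j)\otimes A_k^*A_j
\;=\; \mathrm{Id}_{\mathcal{F}(\C^q)} \otimes \sum_{k=1}^q A_k^*A_k,
\]
whose operator norm is $\bigl\|\sum_{k=1}^q A_k^*A_k\bigr\|$. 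Taking the square root yields \eqref{n-eq-gamma}. The only mildly delicate point is the case analysis in the first paragraph verifying that only the $u=wv$ branch of \eqref{def-gamma-a-f-op} contributes for $f = e^{i\theta}$; the rest is a direct orthogonality computation.
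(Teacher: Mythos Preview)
Your proposal is correct and follows essentially the same approach as the paper: you conjugate by $W\otimes \mathrm{Id}_{\mathcal{H}}$ to realize $\Gamma_A[e^{i\theta}]$ as $\sum_k \ell(e_k)\otimes A_k$, and then use the orthogonality relations $\ell(e_j)^*\ell(e_k)=\delta_{jk}\,\mathrm{Id}$ to compute $(\Gamma_A[e^{i\theta}])^*\Gamma_A[e^{i\theta}] = \mathrm{Id}\otimes\sum_k A_k^*A_k$. Your write-up is in fact more explicit than the paper's, spelling out the case analysis for $f=e^{i\theta}$ and the action on basis vectors.
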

\begin{proof}
The unitary equivalence \eqref{uni-eq-gamma} follows from the equality
\[
\Gamma_A[e^{i\theta}] =   (W^{-1}\otimes Id)(\ell(e_1) \otimes A_1 + \cdots + \ell(e_q) \otimes A_q) (W \otimes Id),
\]
where $W$ is the unitary operator defined in \eqref{def-W} and $Id$ is the identity operator on $\mathcal{H}$. The norm equality \eqref{n-eq-gamma} follows from the equalities
\[
\ell(e_j)^* \ell(e_i) = \mathds{1}(i = j)  \cdot Id, \quad \forall i, j \in \{1, \cdots, q\}
\]
and
\[
(\Gamma_A[e^{i\theta}])^* \Gamma_A[e^{i\theta}]  = Id \otimes \Big(\sum_{k=1}^q A_k^* A_k\Big),
\]
where $Id$ stands for the identity operator on $\mathcal{F}(\C^q)$.
\end{proof}

\begin{proof}[Proof of Theorem \ref{prop-pos-op}]
By definition \eqref{def-gamma-a-f-op}, we have
\[
\Gamma_A[e^{i\theta}] (u, v)=  \left\{
\begin{array}{cc}
  [s_k](A) = A_k , &   \text{if $u = s_k v$}
\vspace{2mm}
\\
0, & \text{otherwise}
\end{array}
\right..
\]
Then it is easy to show that for any $n\ge 1$,
\begin{align}\label{def-gamma-A-n}
(\Gamma_A[e^{i\theta}])^n (u, v)=  \left\{
\begin{array}{cl}
 [w](A), &   \text{if $u = w v$ with $|w| = n$}
\vspace{2mm}
\\
0, & \text{otherwise}
\end{array}
\right..
\end{align}
It follows that
\begin{align}\label{def-gamma-A-adj}
(\Gamma_A[e^{i\theta}]^{*})^{n} (u, v)=  \left\{
\begin{array}{cl}
( [w](A))^*, &   \text{if $v = w u$ with $|w| = n$}
\vspace{2mm}
\\
0, & \text{otherwise}
\end{array}
\right..
\end{align}
Combining \eqref{def-gamma-A-n} and \eqref{def-gamma-A-adj} with \eqref{def-gamma-a-f-op}, for any $n\ge 1$, we have
\begin{align}\label{}
(\Gamma_A[e^{i\theta}])^n  = \Gamma_A[e^{i n \theta}],  \quad  (\Gamma_A[e^{i\theta}]^{*})^{n} =  \Gamma_A[e^{-in \theta}].
\end{align}
Then by von Neumann's inequality and \cite[Thm 2.6]{Paulsen-CB},  for any $P \in \C[e^{i\theta}, e^{-i\theta}]$, we have
\[
\| \Gamma_A[P]\| \le \| P\|_\infty.
\]
The above inequality clearly can be extended to all continuous function $f\in C(\T)$. We then complete the proof of the inequality $\| \Gamma_A[f]\| \le \| f\|_\infty$ for all $f\in L^\infty(\T)$ by applying the equality
\[
\| \Gamma_A[f]\| = \sup_{n \ge 1} \Big\| \Big[\Gamma_A[f](u, v)\Big]_{|u|, |v|\le n}\Big\|
\]
and the continuity of the map $L^\infty(\T)\ni f \mapsto \Gamma_A[f](u, v)$ for any fixed $u, v\in \F_q^{+}$ with respect to the weak-star topology on $L^\infty(\T)$ and norm topology on $B(\mathcal{H})$.  The last assertion on the complete positivity of the map $\Gamma_A$ follows from standard results in the theory of operator systems, see, e.g., \cite[Prop. 2.11 and Thm 3.11]{Paulsen-CB}.
\end{proof}

\subsection{Proof of Corollary \ref{cor-op-pos}}
Assume that $\mu$ is a positive Radon measure on $\T$. Then for any $N\ge 1$, the trigonometric polynomial $\mathcal{F}_N*\mu$ represents a positive function in $L^\infty(\T)$ and hence by Theorem \ref{prop-pos-op}, the operator-valued kernel $\Gamma_A[\mathcal{F}_N*\mu]$ is positive definite. Since the kernel $\Gamma_A[\mu]$ is the coordinatewise limit of $\Gamma_A[\mathcal{F}_N*\mu]$ as $N\to\infty$, it is also positive definite.

\section{Branching-Toeplitz matrices}

\subsection{Proof of Theorem \ref{thm-con-ext}: Case (A1)}
Fix an integer $n\ge 1$.  Assume that the formal Fourier series $f$ given in \eqref{fourier-series} satisfies $\widehat{f}(k)\ge 0$ for all integers $k$ with $|k|\le n$.
Then all the coefficients of $\Gamma_q^{(n)}[f]$ are non-negative. By the equality \eqref{norm-sup-2},  to complete the proof  of Theorem \ref{thm-con-ext} in case (A1),  we only need to prove
\[
 \left\| \Gamma_q^{(n)}[f]\Big|_{ \C^{\B_n(T_q)} \ominus \C^{\mathbb{B}_n(T_q)}_{\mathrm{rad}}}\right\| \le \left\| \Gamma_q^{(n)}[f]\Big|_{\C^{\mathbb{B}_n(T_q)}_{\mathrm{rad}}}\right\|.
\]
Assume by contradiction that
\begin{align}\label{assump-m-M}
\underbrace{\left\| \Gamma_q^{(n)}[f]\Big|_{\C^{\mathbb{B}_n(T_q)}_{\mathrm{rad}}}\right\|}_{\text{denoted by $m \ge 0$}} <\underbrace{ \left\| \Gamma_q^{(n)}[f]\Big|_{ \C^{\B_n(T_q)} \ominus \C^{\mathbb{B}_n(T_q)}_{\mathrm{rad}}}\right\|}_{\text{denoted by $M > 0$}}.
\end{align}
Then by the equality \eqref{norm-sup-2}, we have
\begin{align}\label{M-norm-gamma}
M = \| \Gamma_q^{(n)}[f]\|.
\end{align}
By a compactness argument, there exists $v = (v_\sigma)_{\sigma \in \B_n(T_q)} \in \C^{\B_n(T_q)} \ominus \C^{\mathbb{B}_n(T_q)}_{\mathrm{rad}}$ such that
\[
\| v\|=1 \an  M = \| \Gamma_q^{(n)} v\|.
\]
Define a vector $|v| \in \C^{\B_n(T_q)}$ by setting
\[
|v|_\sigma : = | v_\sigma|, \quad \forall \sigma \in \B_n(T_q).
\]

{\flushleft \bf Claim I:}  $|v|$ is a norming vector for $\Gamma_q^{(n)}[f]$: that is,
\[
\||v|\|=1 \an \| \Gamma_q^{(n)}[f] |v|\| = M = \| \Gamma_q^{(n)}[f]\|.
\]

Indeed, by definition of $|v|$, we clearly have $\| |v|\| = \| v\| = 1$. Moreover, since all coefficients of $\Gamma_q^{(n)}[f]$ are non-negative, we have
\[
 \| \Gamma_q^{(n)}[f] |v|\| \ge \| \Gamma_q^{(n)}[f] v\| = M = \| \Gamma_q^{(n)}[f]\|.
\]
We complete the proof of Claim I by applying the following inequality
\[
\| \Gamma_q^{(n)}[f] |v|\| \le  \| \Gamma_q^{(n)}[f]\| \cdot \| |v|\| = \| \Gamma_q^{(n)}[f]\|.
\]

Recall the notation  $P_\rad^{(n)}: \C^{\B_n(T_q)} \rightarrow \C^{\B_n(T_q)}_\rad$   of the orthogonal projection  onto $\C^{\B_n(T_q)}_\rad$. Denote also  the orthogonal projection
\[
P_\rad^{(n) \bot}: \C^{\B_n(T_q)} \xrightarrow{\quad} \C^{\B_n(T_q)} \ominus \C^{\mathbb{B}_n(T_q)}_{\mathrm{rad}}.
\]
Then we can write
\[
|v| = P_\rad^{(n)} (|v|) + P_\rad^{(n) \bot} (|v|).
\]
Clearly, we have
\begin{align}\label{dec-1}
1 = \| |v|\|^2 = \|  P_\rad^{(n)} (|v|)\|^2 + \| P_\rad^{(n)\bot} (|v|) \|^2.
\end{align}

{\flushleft \bf Claim II:} We have
\begin{align}\label{non-zero-proj}
P_\rad^{(n)} (|v|) \ne 0.
\end{align}
Otherwise, $P^{(n)}_{\mathrm{rad}}(|v|)=0$ and thus  $|v|\in \C^{\B_n(T_q)} \ominus \C^{\mathbb{B}_n(T_q)}_{\rad}$. But observe that a vector $w\in \C^{\B_n(T_q)} \ominus \C^{\mathbb{B}_n(T_q)}_{\rad}$ if and only if for any integer $k$ with $0 \le k\le n$, we have
\[
\sum_{\sigma:|\sigma|=k} w_\sigma=0.
\]
By definition, all the coefficients of the vector $|v|$ are non-negative. Therefore, if $|v|$ belongs to the space  $\C^{\B_n(T_q)} \ominus \C^{\mathbb{B}_n(T_q)}_{\rad}$, then $|v|$ and thus $v$ must be the zero-vector. This contradicts to the assumption $\|v\|_2=1$.

Now by using the block representation \eqref{BT-block-form-finite} of $\Gamma_q^{(n)}[f]$, we obtain the othogonal decomposition of $\Gamma_q^{(n)}[f] |v|$:
\begin{align}\label{rad-orth-dec}
\begin{split}
\Gamma_q^{(n)}[f] |v| =&   P_\rad^{(n)} \Gamma_q^{(n)}[f]  P_\rad^{(n)} (|v|)    +   P_\rad^{(n) \bot} \Gamma_q^{(n)}[f]  P_\rad^{(n) \bot} (|v|)
\\
= &  \big[\Gamma_q^{(n)}[f]\big|_{\C^{\B_n(T_q)}_\rad}\big] \big( P_\rad^{(n)}|v|\big) +  \big[\Gamma_q^{(n)}[f]\big|_{\C^{\B_n(T_q)} \ominus \C^{\B_n(T_q)}_\rad}\big] \big( P_\rad^{(n) \bot}|v|\big).
\end{split}
\end{align}
Then by Claim I, we have
\begin{align*}
M^2   \stackrel{\text{Claim I}}{=\joinrel=\joinrel=\joinrel=} &  \| \Gamma_q^{(n)}[f] |v|\|^2
\\
  \stackrel{\eqref{rad-orth-dec}}{=\joinrel=\joinrel=\joinrel=} &  \left\|  \big[\Gamma_q^{(n)}[f]\big|_{\C^{\B_n(T_q)}_\rad}\big] \big( P_\rad^{(n)}|v|\big) \right\|^2 + \left\| \big[\Gamma_q^{(n)}[f]\big|_{\C^{\B_n(T_q)} \ominus \C^{\B_n(T_q)}_\rad}\big] \big( P_\rad^{(n) \bot}|v|\big)\right\|^2
\\
 \le &  \left\|  \Gamma_q^{(n)}[f]\big|_{\C^{\B_n(T_q)}_\rad} \right\|^2  \cdot  \Big \|  P_\rad^{(n)}|v|\Big \|^2 +  \left\| \Gamma_q^{(n)}[f]\big|_{\C^{\B_n(T_q)} \ominus \C^{\B_n(T_q)}_\rad} \right\|^2 \cdot \Big \|  P_\rad^{(n) \bot}|v| \Big\|^2
\\
 =  & m^2  \Big \|  P_\rad^{(n)}|v|\Big \|^2 + M^2   \Big \|  P_\rad^{(n) \bot}|v| \Big\|^2
\\
  < & M^2  \Big \|  P_\rad^{(n)}|v|\Big \|^2 + M^2   \Big \|  P_\rad^{(n) \bot}|v| \Big\|^2   \,\, (\textit{strict inequality follows from  \eqref{assump-m-M}, \eqref{non-zero-proj}})
\\
 = & M^2 \qquad  \qquad (\textit{this step follows from  \eqref{dec-1}}).
\end{align*}
Thus we get a contradiction  and complete the whole proof of Theorem \ref{thm-con-ext} in case (A1).

\subsection{Proof of Theorem \ref{thm-con-ext}: Cases (A2) and (A3)}

By \eqref{norm-sup-2}, for any integer $n\ge 1$,  we always have  the one-sided inequality
$
\| T_n(f)\| \le  \| \Gamma_q^{(n)}[f]\|.
$
Therefore, it remains to prove that the reverse inequality
\begin{align}\label{rev-ineq}
\| \Gamma_q^{(n)}[f]\| \le \| T_n(f)\|
\end{align}
in case (A2) and case (A3).

We shall need the following classical results on the Carath\'eodory-Toeplitz extension problem  and  the Carath\'eodory-F\'ejer-Schur extension problem.

\begin{theorem}[{See, e.g., \cite[Thm 1.2 and Thm 1.3]{Foias-lifting} and \cite[Thm IV.24]{Tsuji}}]\label{thm-ext-pos-Toep}
Any finite non-negative definite Toeplitz matrix can be extended to  a non-negative definite  Toeplitz kernel on $\N$.  More precisely, fix an integer $n\ge 1$, if a finite Toeplitz matrix $T_n(f)$ is non-negative, then there exists a positive Radon measure on $\T$ such that
\[
T_n(f) = T_n(\mu),
\]
where
\[
T_n(\mu) = \Big[\int_\T e^{-i (k - l)\theta} d \mu(\theta)\Big]_{k, l \in \N}
\]
\end{theorem}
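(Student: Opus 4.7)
The plan is to prove the Carathéodory-Toeplitz extension theorem via the classical duality approach: realize the information in $T_n(f)$ as a positive linear functional on a finite-dimensional subspace of $C(\T)$, extend it by a Hahn-Banach-type argument, and then apply Riesz representation.

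First, let $V_n \subset C(\T)$ be the $(2n+1)$-dimensional subspace $V_n = \mathrm{span}\{e^{-ik\theta}: -n \le k \le n\}$, and define a linear functional $L: V_n \to \C$ by
\[
L(e^{-ik\theta}) := \widehat{f}(k), \quad |k| \le n.
\]
Since $T_n(f)$ is non-negative definite (and hence Hermitian), the sequence satisfies $\widehat{f}(-k) = \overline{\widehat{f}(k)}$, so $L$ is conjugation-compatible, i.e.\ $L(\overline{P}) = \overline{L(P)}$. The crucial step is to verify that $L$ is a \emph{positive} functional on $V_n$: for every $P \in V_n$ with $P(\theta) \ge 0$ on $\T$, we have $L(P) \ge 0$. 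This will use the Fejér-Riesz factorization theorem, which says that any such non-negative $P$ factors as $P = |Q|^2$ for some analytic polynomial $Q(\theta) = \sum_{j=0}^n q_j e^{-ij\theta}$ of degree at most $n$. Expanding,
\[
L(|Q|^2) = \sum_{k,l=0}^n q_k \overline{q_l}\, L(e^{-i(k-l)\theta}) = \sum_{k,l=0}^n q_k \overline{q_l}\, \widehat{f}(k-l),
\]
which, using the Hermitian symmetry of $\widehat{f}$, equals the non-negative quadratic form $\langle T_n(f) q, q\rangle \ge 0$.

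Next, I would extend $L$ to a positive linear functional $\widetilde{L}$ on all of $C(\T)$. Since the constant function $1 = e^{-i\cdot 0\cdot \theta}$ lies in $V_n$ and is an order unit in $C(\T)$ (every continuous function is dominated by a positive multiple of $1$), this extension is provided by the Krein (positive version of Hahn-Banach) extension theorem for positive functionals on ordered vector spaces. Applying the Riesz representation theorem to $\widetilde{L}$ yields a positive Radon measure $\mu$ on $\T$ with
\[
\widetilde{L}(g) = \int_\T g(\theta)\, d\mu(\theta), \quad g \in C(\T).
\]
Specializing to $g = e^{-ik\theta}$ for $|k|\le n$ gives $\widehat{f}(k) = \int_\T e^{-ik\theta} d\mu(\theta)$, which is exactly the identity $T_n(f) = T_n(\mu)$ claimed.

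The principal obstacle is the positivity step via Fejér-Riesz: one must invoke (or prove) the factorization of a non-negative trigonometric polynomial as $|Q|^2$, which itself can be obtained by a root-pairing argument for the polynomial $z^n P(\theta)$ viewed on the unit circle. Once this factorization is available, the remainder of the argument is formal and standard. As an alternative, one could bypass Fejér-Riesz and prove the theorem by induction on $n$, recursively constructing a fake Fourier coefficient $\widehat{f}(n+1)$ lying in a Schur disk so that $T_{n+1}(f)$ remains non-negative, then passing to the limit via Herglotz's theorem on positive definite sequences on $\Z$; but the duality approach outlined above is both cleaner and more in the spirit of the operator-theoretic material developed earlier in the paper.
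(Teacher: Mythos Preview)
The paper does not contain its own proof of this statement: Theorem~\ref{thm-ext-pos-Toep} is quoted as a classical result with references to Foia\c{s}--Frazho and Tsuji, and is used as a black box in the proof of Theorem~\ref{thm-con-ext}, Case~(A2). So there is no in-paper argument to compare against.

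That said, your proposal is a correct and standard proof of the Carath\'eodory--Toeplitz extension theorem. The key step---using Fej\'er--Riesz to show that the functional $L$ is positive on $V_n$, since any nonnegative trigonometric polynomial of degree $\le n$ factors as $|Q|^2$ with $\deg Q \le n$, and then $L(|Q|^2) = \langle T_n(f)\,q, q\rangle \ge 0$---is exactly the right mechanism, and the Krein extension followed by Riesz representation is the clean way to finish. (One tiny cosmetic point: your computation of $L(|Q|^2)$ literally gives $\sum_{k,l} q_k \overline{q_l}\,\widehat{f}(k-l)$, which is the complex conjugate of $\langle T_n(f)\,q, q\rangle$; since the form is real this is harmless, but you might note it.) Your alternative Schur-parameter induction route is also a legitimate proof and is in fact closer in spirit to the treatment in the cited references, but the duality argument you chose is entirely adequate here.
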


\begin{theorem}[{See, e.g., \cite[(3.2.3) in p.232]{Nikolski} and \cite[Thm 6.7]{Foias-lifting}  and \cite{CF}}]\label{thm-CF}
For any $a_0, a_1, \cdots, a_n \in \C$, we have
\[
\min\Big\{\| f\|_\infty\Big| \text{$f \in H^\infty(\T)$ and $\widehat{f}(k) = a_k$ for all $0\le k \le n$} \Big\} = \left\| T_n\Big( \sum_{k  =0}^n a_k e^{i k \theta} \Big)\right\|,
\]
where
\[
T_n\Big( \sum_{k  =0}^n a_k e^{i k \theta} \Big)  =
\left[
\begin{array}{cccccc}
a_0 & 0 & 0& \cdots & 0&0
\\
a_1 & a_0 & 0 & \cdots & 0 & 0
\\
a_2 & a_1 & a_0 & \cdots & 0 & 0
\\
\vdots & \vdots & \vdots & \vdots & \vdots & \vdots
\\
a_{n-1} & a_{n-2}& a_{n-3} & \cdots & a_0 & 0
\\
a_n & a_{n-1}& a_{n-2} & \cdots & a_1 & a_0
\end{array}
\right].
\]
\end{theorem}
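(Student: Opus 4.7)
The plan is to split the equality into two inequalities and argue them separately. Let $\rho$ denote the right-hand side, i.e.\ the operator norm of the lower-triangular Toeplitz matrix $L_n := T_n\bigl(\sum_{k=0}^n a_k e^{ik\theta}\bigr)$ acting on $\C^{n+1}$, and let $\mu$ denote the left-hand side (an infimum over $H^\infty$ extensions).

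The inequality $\rho \le \mu$ is the easy direction. Given any $f \in H^\infty(\T)$ with $\widehat{f}(k) = a_k$ for $0 \le k \le n$, I would use the unitary identification from the commutative diagram \eqref{2-way-Toep} to view the Toeplitz operator $T_f$ on $H^2(\T)$ in the orthonormal basis $\{e^{ij\theta}\}_{j \ge 0}$. Because $f \in H^\infty$, its matrix is lower-triangular with entries $\langle T_f e^{ik\theta}, e^{il\theta}\rangle = \widehat{f}(l-k)$, and only $\widehat{f}(0), \dots, \widehat{f}(n)$ enter when one compresses $T_f$ to the finite-dimensional subspace $E_n := \spann\{1, e^{i\theta}, \dots, e^{in\theta}\}$. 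Thus the compression $P_{E_n} T_f|_{E_n}$ coincides exactly with $L_n$, and $\|L_n\| \le \|T_f\| = \|f\|_\infty$. Taking the infimum over admissible $f$ yields $\rho \le \mu$.

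The reverse inequality $\mu \le \rho$ is the real content and should proceed by constructing an explicit extension. I would normalize so that $\rho = 1$, so $L_n$ is a contraction on $E_n$. The idea is then to lift $L_n$ to a contractive Toeplitz operator on all of $H^2(\T)$, which by the Brown--Halmos/Nehari correspondence amounts to producing an $H^\infty$ function $f$ with $\|f\|_\infty \le 1$ matching the prescribed Fourier coefficients. Two standard routes are available: (i) apply the Sz.-Nagy--Foia\c{s} commutant lifting theorem to the compression of the shift to $E_n$ (note $L_n$ commutes with this compression), producing a contractive lifting which is necessarily a Toeplitz operator and hence has an $H^\infty$ symbol of norm $\le 1$; or (ii) run the Schur algorithm on the data $(a_0,\dots,a_n)$, iteratively peeling off Schur parameters $\gamma_0, \dots, \gamma_n \in \overline{\D}$ and terminating with any inner (or constant) completion, producing a finite Blaschke-type $H^\infty$ function whose norm matches $\|L_n\|$ by the Schur determinant formula. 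Either route gives $f \in H^\infty(\T)$ with $\|f\|_\infty \le \rho$ and $\widehat{f}(k) = a_k$ for $0 \le k \le n$, so $\mu \le \rho$, and the infimum is attained (justifying the ``$\min$'' in the statement).

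The hard part is the reverse direction: the compression step is essentially tautological, while producing a norm-preserving $H^\infty$ extension genuinely requires a nontrivial construction. My preference would be the commutant lifting route, since it is cleaner than bookkeeping the Schur parameters and dovetails naturally with the Hardy space framework already present in the paper via \eqref{2-way-Toep}; the only subtlety is to verify that the lifted operator is Toeplitz, which follows because the original commutes with the compressed shift and the lifting commutes with the full shift, forcing the symbol to lie in $H^\infty$.
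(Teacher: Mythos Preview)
Your proof sketch is correct and outlines the standard approaches to the Carath\'eodory--F\'ejer--Schur theorem: the compression argument for $\rho \le \mu$ is exactly right, and both the commutant lifting and Schur algorithm routes for $\mu \le \rho$ are valid and well known.

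However, there is nothing to compare against: the paper does \emph{not} prove this theorem. It is quoted as a classical result with external references (Nikolski, Foias--Frazho, and the original Carath\'eodory--F\'ejer paper) and then invoked as a black box in the proof of Theorem~\ref{thm-con-ext}, Case~(A3). So your proposal is not an alternative to the paper's argument; it is a proof of a result the paper simply imports from the literature. Among the references the paper cites, the Foias--Frazho treatment is indeed via commutant lifting, so your preferred route aligns with one of the sources; the Schur algorithm approach is closer in spirit to the original Carath\'eodory--F\'ejer argument.
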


{\flushleft \bf Case (A2).}
Fix an integer $n \ge 1$ and assume that $T_n(f)$ is Hermitian (this is equivalent to assume that $\Gamma_q^{(n)}[f]$ is Hermitian). By homogeneity, we may assume $\| T_n(f)\| = 1$. Then, by the Hermitian assumption on $T_n(f)$, both matrices $T_n(1 \pm f)$ are positive definite.  Therefore, by Theorem \ref{thm-ext-pos-Toep}, there exist two positive Radon measures $\mu_\pm$ on $\T$ such that for our fixed integer $n$, we have
\begin{align}\label{T-f-T-mu}
T_n(1 \pm f) = T_n(\mu_\pm).
\end{align}
Now by  Theorem \ref{thm-QW-I}, the infinite branching-Toeplitz kernels $\Gamma_q[\mu_\pm]$ are positive definite.  It follows that the restrictions $\Gamma_q^{(n)}[\mu_\pm]$ on the subset $\B_n(T_q) \times \B_n(T_q) \subset T_q \times T_q$ are  both  positive definite.  However the equalities \eqref{T-f-T-mu} for the integer $n$ imply the following equalities for the same $n$:
\[
\Gamma_q^{(n)}[1 \pm f] = \Gamma_q^{(n)}[\mu_\pm].
\]
Hence both matrices $\Gamma_q^{(n)}[1 \pm f]$ are positive definite. Note that we clearly have
\[
\Gamma_q^{(n)}[1 \pm f] = Id_{\B_n(T_q)} \pm \Gamma_q^{(n)}[f],
\]
where $Id_{\B_n(T)q)}$ is the identity matrix on $\C^{\B_n(T_q)}$. The desired  operator norm inequality $
\| \Gamma_q^{(n)}[f]\| \le 1$ then follows immediately.

\bigskip

{\flushleft \bf Case (A3).}
Fix an integer $n \ge 1$ and assume that $f \in H^\infty(\T)$. Note that we have
\[
T_n(f) = T_n\Big(\sum_{k = 0}^n \widehat{f}(k) e^{i k \theta}\Big).
\]
By Theorem \ref{thm-CF}, there exists $F\in H^\infty(\T)$ with $\| F\|_\infty = \| T_n(f)\|$ such that
\begin{align}\label{T-f-F}
T_n(f) = T_n(F).
\end{align}
Therefore, by Theorem \ref{thm-all-symbol}, we have $\| \Gamma_q[F]\| = \| F\|_\infty = \| T_n(f)\|$, which implies immediately  the inequality $\|\Gamma_q^{(n)}[F]\| \le \| \Gamma_q[F]\| = \| T_n(f)\|$.  But the equality \eqref{T-f-F} implies $\Gamma_q^{(n)}[f] = \Gamma_q^{(n)}[F]$, hence we obtain the desired inequality $\| \Gamma_q^{(n)}[f]\| \le \| T_n(f)\|$.

\subsection{Proof of Proposition \ref{prop-low-bdd}}  Fix $n\ge 1$ and a formal Fourier series $f$. Take any $g\in L^\infty(\T)$
 such that $\widehat{g}(k) = \widehat{f}(k)$ for all integers $k$ with $|k|\le n$. Then for any integer $q\ge 2$,  we have 
$\Gamma_q^{(n)}[f] = \Gamma_q^{(n)}[g]$.
Therefore,  by Theorem \ref{thm-all-symbol}, we have 
\[
\| \Gamma_q^{(n)}[f]\| = \| \Gamma_q^{(n)}[g]\| \le \| \Gamma_q[g]\|  = \| g\|_\infty. 
\] 
Thus by definition of $c_n(f)$, we have  $\| \Gamma_q^{(n)}[f]\| \le c_n(f)$ for any integer $q\ge 2$.  The proof is complete. 
\section{Appendix}
Here we give an alternative proof of the inequality \eqref{low-bdd} for a Hermitian kernel $\Gamma_q[f]$ by using Theorem \ref{thm-QW-I}.

 Assume that a formal Fourier series $f$ gives rise to a Hermitian kernel $\Gamma_q[f]$ and   $\lambda = \| \Gamma_q[f]\|<\infty$. Then $\lambda \cdot Id \pm \Gamma_q[f]$ are two positive definite kernels on $T_q$.   Since $\Gamma_q[1] = Id$, we obtain two positive definite kernels  $\Gamma_q[\lambda \pm f]$.
Then by applying Theorem \ref{thm-QW-I}, there exists two positive Radon measures $\mu_{\pm}$  on $\T  = \R /2 \pi \Z$ such that
\[
\lambda \delta_0(n) \pm \widehat{f}(n) = \int_\T e^{-i n \theta} d\mu_{\pm}(\theta), \quad \forall n \in \Z.
\]
Therefore, we have
\[
\pm \widehat{f}(n) =  \int_\T e^{- in \theta} d\Big[ \mu_{\pm} - \lambda m\Big](\theta), \quad \forall n \in \Z,
\]
where $dm$ is the normalized Haar measure on $\T$.  Hence we have
\[
\mu_{+} - \lambda m = - (\mu_{-} - \lambda m) \text{\, and thus \, }
\mu_{+} + \mu_{-} = 2\lambda m.
\]
It follows that both $\mu_{\pm}$ are absolutely continuous with respect to $m$. Denote $\mu_{\pm} = g_{\pm}  \cdot m$.
Then $g_{+}, g_{-} \ge 0$, $g_{+} + g_{-}  = 2\lambda$ and $f = g_{+}- \lambda = \lambda - g_{-}$.
Therefore, we have
\[
f = \frac{(g_{+}- \lambda) + (\lambda - g_{-})}{2} = \frac{g_{+} - g_{-}}{2}
\]
and thus we obtain the desired inequality
\[
\| f\|_\infty  =  \frac{| g_{+}- g_{-}|}{2} \le  \frac{ g_{+}+ g_{-}}{2} = \lambda = \| \Gamma_q[f]\|.
\]

\section*{Acknowledgements}
The research of Y. Qiu is supported by grants NSFC Y7116335K1,  NSFC 11801547 and NSFC 11688101 of National Natural Science Foundation of China. Z. Wang is supported by NSFC 11601296.

\end{document}